\documentclass[12pt]{amsart}
\usepackage{amscd}
\usepackage{amsmath}
\usepackage{latexsym}
\usepackage{graphicx}
\usepackage{amsfonts}
\usepackage{amssymb}
\usepackage[all]{xy}
\hyphenation{equi-var-i-ant compacti-fi-ca-tion Corol-lary Gro-then-dieck}
\renewcommand{\atop}[2]{\genfrac{}{}{0pt}{}{#1}{#2}}

\newcommand{\edit}[1]{}


\newcommand{\rank}{\mathop{\mathrm{rank}}}
\newcommand{\disc}{\mathop{\mathrm{disc}}}
\newcommand{\oo}{\otimes}
\newcommand{\op}{\mathrm{op}}

\newcommand{\Cl}{C}
\newcommand{\Clone}{C_{\langle1\rangle}}

\newcommand{\bP}{\mathbb{P}}
\newcommand{\R}{\mathbb{R}}
\newcommand{\C}{\mathbb{C}}
\renewcommand{\H}{\mathbb{H}}
\newcommand{\bQ}{\mathbb{Q}}
\newcommand{\Z}{\mathbb{Z}}
\newcommand{\Q}{\mathbf{Q}}
\newcommand{\GW}{\textrm{GW}}
\newcommand{\XG}{X_G}   
\newcommand{\Sq}{\mathrm{Sq}}
\newcommand{\cA}{\mathcal{A}}
\newcommand{\cB}{\mathcal{B}}

\newcommand{\cE}{\mathcal{E}}
\newcommand{\cH}{\mathcal{H}}
\newcommand{\cL}{\mathcal{L}}
\newcommand{\cO}{\mathcal{O}}
\newcommand{\pt}{{\mathrm{pt}}}

\newcommand{\an}{\mathrm{an}}
\newcommand{\et}{\mathrm{et}}

\newcommand{\tors}{\mathrm{tors\,}}
\renewcommand{\top}{{\mathrm{top}}}
\newcommand{\bara}{\bar{a}}

\newcommand{\tH}{\widetilde{H}}

\numberwithin{equation}{section}
\theoremstyle{plain}
\newtheorem{theorem}[equation]{Theorem}
\newtheorem{maintheorem}[equation]{Main Theorem}

\newtheorem{corollary}[equation]{Corollary}
\newtheorem{proposition}[equation]{Proposition}
\newtheorem{lemma}[equation]{Lemma}
\newtheorem{substuff}{\bf Remark}[equation] 

\newtheorem{subcor}[substuff]{Corollary}

\theoremstyle{definition}

\newtheorem{defn}[equation]{Definition}

\newtheorem{example}[equation]{Example}
\newtheorem{subex}[substuff]{Example}

\theoremstyle{remark}
\newtheorem{remark}[equation]{Remark}

\newtheorem{subremark}[substuff]{Remark} 

\def\smap#1{\ {\buildrel #1 \over \rightarrow}\ }
\def\map#1{{~\buildrel #1 \over \longrightarrow~}}

\newcommand{\Spec}{\operatorname{Spec}}
\newcommand{\Pic}{\operatorname{Pic}}
\newcommand{\Br}{\operatorname{Br}}
\newcommand{\BW}{\operatorname{GB}}    
\newcommand{\RBr}{\operatorname{BR}}  
\newcommand{\RGB}{\operatorname{GBR}}
\newcommand{\hatZ}{\widehat{Z}^0}      
\newcommand{\id}{\operatorname{id}}

\newcommand{\End}{\operatorname{End}}
\newcommand{\Gm}{\mathbb{G}_{m}}

\hyphenation{graded-com-mu-ta-tive}

\begin{document}
\title[The Real Graded Brauer group]
{The Real Graded Brauer group}
\date{\today}

\author{Max Karoubi}
\address{Universit\'e Denis Diderot Paris 7 \\
Institut Math\'ematique de Jussieu --- Paris Rive Gauche}
\email{max.karoubi@gmail.com}
\urladdr{http://webusers.imj-prg.fr/~max.karoubi}

\author{Charles Weibel}
\address{Math.\ Dept., Rutgers University, New Brunswick, NJ 08901, USA}
\email{weibel@math.rutgers.edu}\urladdr{http://math.rutgers.edu/~weibel}
\thanks{Weibel was supported by NSA and NSF grants.}

\begin{abstract}
We introduce a version of the Brauer--Wall group for Real vector
bundles of algebras (in the sense of Atiyah), 
and compare it to the topological analogue of the Witt group.
For varieties over the reals, these invariants capture 
the topological parts of the Brauer--Wall and Witt groups.
\end{abstract}
\maketitle

\pagestyle{myheadings} 
\setcounter{section}{0}%

In this paper we introduce the Real graded Brauer group $\RGB(X)$
of a topological space with involution, and connect it to the
Real Witt group $WR(X)$ introduced in \cite{KSW}, 
via a Clifford algebra construction. These topological groups 
share many of the properties of their algebraic counterparts,
the Brauer--Wall group and the Witt group of an algebraic variety.
If $X$ is homotopic to a finite CW complex with involution, 
$\RGB(X)$ is a finite abelian group of exponent~8, and 
$WR(X)$ is a finitely generated abelian group.

When $V$ is an algebraic variety defined over $\R$, there are natural
maps from the Brauer--Wall group $\BW(V)$ and Witt group $W(V)$ to
$\RGB(V_\top)$ and $WR(V_\top)$, where $V_\top$ is the topological
space with involution associated to $V\oo_\R\C$. These maps are
compatible with the Clifford algebra maps in the algebraic and
topological settings. Moreover, $\RGB(V_\top)$ is the (finite) 
non-divisible part of $\BW(V)$, and 
the map $W(V)\to WR(V_\top)$ is close to an isomorphism
in low dimensions, depending on the Hodge number $h^{0,2}(V)$:

\begin{maintheorem}\label{thm:main}
1) 
If $V$ is a smooth curve, defined over either $\R$ or $\C$, 
then the map $\BW(V)\to\RGB(V_\top)$ is an isomorphism. 

\vspace{3pt}
\noindent 2) 
If $V$ is a smooth projective variety, defined over either $\R$ or $\C$, 
then there is a split exact sequence
\[
0\to (\bQ/\Z)^{\rho} \to \BW(V) \smap{\tau} \RGB(V_{\top}) \to 0,
\]
and $\tau: \BW(V) \cong \RGB(V_{\top})$ if and only 
$h^{0,2}(V)$ is~$0$.

\vspace{5pt}
\noindent 3)
If $V$ is a smooth projective surface defined over $\R$, 
with no $\R$-points, then: $WR(V_\top)$ embeds into $\RGB(V_\top)$;
there is a split exact sequence
\[
0\to (\Z/2)^{\rho} \to W(V) \smap{\tau} WR(V_{\top}) \to 0;
\]
and $\tau:W(V) \cong WR(V_{\top})$ if and only 
$p_g(V)=h^{0,2}(V)$ is~$0$.
\end{maintheorem}
\goodbreak\goodbreak

The number $\rho$ in Theorem \ref{thm:main} is the rank of the 
cokernel of the first Chern class $c_1:\Pic(V)\to \H_G^2(X,\Z(1))$,
where $\Z(1)$ is the sign representation of $G$; alternatively, 
$\rho$ is the rank of the map
$\H^2_G(X,\Z(1))\to\H^2_G(X,\cO_\an)$.

\noindent
Part 1 is proven in Corollary \ref{C-curves} and 
Proposition \ref{RGB:curves} below.
Part 2 is proven in Theorems \ref{h02=0} and \ref{real:pg=0}, and
Part 3 is proven in Theorems \ref{pg=0} and \ref{WR:pg=0}.
The obstruction in both the real and complex cases is the divisible subgroup 
$(\bQ/\Z)^{\rho}$ of the Brauer group of $V$.

\smallskip
The paper is organized as follows.
Section \ref{sec:BW} contains a review of the algebraic situation: 
the Brauer--Wall group $\BW(V)$ has a
filtration with associated graded groups $H_\et^0(V,\Z/2),H_\et^1(V,\Z/2)$ 
and the Brauer group $\Br(V)$. Note that by Cox' Theorem \cite{Cox},
$H_\et^n(V,\Z/2)$ is identified with the Borel cohomology 
$\H_G^n(V_\top,\Z/2)$.

In Sections \ref{sec:RGB}--\ref{sec:Q(X)}, 
we define the Real graded Brauer group $\RGB(X)$ and 
give it a filtration whose associated graded groups are:
$\H_G^0(X,\Z/2)$, $\H_G^1(X,\Z/2)$ and $\RBr(X)$, the Real analogue of the
Brauer group; in Section \ref{sec:RBr} we show that $\RBr(X)$ is
isomorphic to the torsion subgroup ${_\tors}\H_G^3(X,\Z(1))$
of $\H_G^3(X,\Z(1))$.

In Section \ref{sec:WR} we show that the
Clifford algebra map $WR(X)\to\RGB(X)$ is linked to the Stiefel--Whitney
classes of $X$, parallel to the algebraic setting.
Section \ref{sec:BW-RGB} establishes Part~2 of Theorem \ref{thm:main},
relating $\BW(V)$ and $\RGB(V_\top)$,
while Sections \ref{sec:C} and \ref{sec:free} establish Part~3, 
relating $W(V)$ and $WR(V_\top)$.

\smallskip

{\it Notation:}
$X$ will always denote a finite dimensional CW complex with involution,
with $\pi_0(X)$ finite. 
For example, if $V$ is quasi-projective variety over $\R$ then  $V_\top$
is finite dimensional but need not be compact.

A bundle whose fibers are vector spaces over $\R$ will be called a
{\it $\R$-linear vector bundle} to distinguish it from a Real bundle.

We shall write $G$ for the cyclic group of order~2 and
$\H_G^*(X,\Lambda)$ for the ($G$-equivariant) Borel cohomology 
of $X$ with coefficients in the $G$-module $\Lambda$.
By definition, this is the cohomology of the free $G$-space
$\XG=X\times_G EG$ with local coefficients $\Lambda$.

For example, $G$ acts by complex conjugation of the group
$\mu_n$ (resp., $\mu_\infty=\bigcup_n \mu_n$) of $n^{th}$ roots of unity
(resp., all roots of unity) in $\C$. We shall also write $\Z(1)$ 
and $\bQ(1)$ for the sign representations, 
so that $\bQ(1)/\Z(1)\cong\mu_\infty$.

\smallskip
\begin{center}{\it Acknowledgements}\end{center}
 
\smallskip
The authors are grateful to Sujatha, J.--L.\ Colliot-Th\'el\`ene 
and D. Krashen for several discussions.
We are are also grateful to 
J.\ Lannes for the argument of Theorem \ref{u=w_2(E)}, and to 
O.\ Wittenberg for help with
the computation of $\Br(V)$ in Theorem \ref{pg>0}.

\medskip\goodbreak
\section{The Brauer--Wall group of a scheme}\label{sec:BW}

The Brauer--Wall group $\BW(R)$ of a commutative ring $R$ containing $1/2$
was developed by Bass \cite[IV]{Bass-Tata} and Small \cite{Small} in
the late 1970s, following Wall \cite{Wall1963}, who introduced and
studied the Brauer--Wall group of a field.  
More recent references are the books by
Knus \cite[III.6]{Knus} and Lam \cite[IV]{Lam}. 

Although the definitions and results generalize easily to schemes 
containing $1/2$, we could not find any reference for this material.  
Therefore this section presents the Brauer--Wall group of a 
scheme $V$ whose structure sheaf $\cO=\cO_V$ contains $1/2$.

We say that a sheaf of $\cO_V$-algebras $\cA$ is {\it graded} if it is 
$\Z/2$-graded as a sheaf of rings, i.e., if $\cA=\cA_0\oplus\cA_1$, 
the product takes $\cA_i\oo\cA_j$ to $\cA_{i+j}$.
The graded tensor product $\cA\hat\oo\cB$ of two sheaves $\cA$, $\cB$ of 
graded algebras is the tensor product of the underlying graded
$\cO_V$-modules with multiplication determined by 
\begin{equation}\label{eq:hat-product} 
(a\oo b)(a'\oo b') = (-1)^{ij}(aa'\oo bb')  
\end{equation}
for homogeneous sections $a'$ of $\cA_i$, $b$ of $\cB_j$.
The opposite graded algebra $\cA^{\op}$ is $\cA$ with product 
$a\ast b=(-1)^{ij}ba$ for sections $a$ of $\cA_i$ and $b$ of $\cA_j$.

If $\cE=\cE_0\oplus \cE_1$ is a $\Z/2$-graded $\cO_V$-module,
its endomorphisms $\End(\cE)$ form a graded algebra;
the degree~0 subalgebra is $\End(\cE_0)\times\End(\cE_1)$.
We say that a locally free  sheaf $\cA$ of algebras is a 
(graded) {\it Azumaya algebra} if 
$\varphi:\cA\hat\oo\cA^{\op}\map{}\End(\cA)$ is an isomorphism,
where $\varphi(a\oo b)(c)=(-1)^{jk}acb$ for sections $a,b,c$ of
$\cA_i$, $\cA_j$, $\cA_k$.

\begin{defn}\label{def:BW}
The {\it Brauer--Wall group} of $V$, which we will write as $\BW(V)$, is the
set of equivalence classes of graded Azumaya algebras, where equivalence is
generated by making $\cA$ and $\cA\hat\oo\End(\cE)$ equivalent for every
$\Z/2$-graded locally free sheaf $\cE$. Equivalently,
graded Azumaya algebras $\cA$ and $\cB$ are equivalent 
if and only if $\cA\hat\oo\cB^\op$ is isomorphic to $\End(\cE)$
for some $\cE$.  It is an abelian group under
the product $\hat\oo$ of \eqref{eq:hat-product};
the identity element is $\cO_V$ concentrated in degree~0.
\end{defn}

\begin{example}\label{ex:Br}
A graded Azumaya algebra with $\cA_1=0$ is the same thing as
an ungraded Azumaya algebra. If $\cA=\cA_0$ and $\cA$ vanishes 
in $\BW(V)$ then $\cA_0\cong\End(\cE)$ for a 
graded locally free sheaf $\cE$ (necessarily concentrated in one degree),
and hence $\cA_0$ vanishes in $\Br(V)$.
Since $\cA\hat\oo\cB$ is just $\cA_0\oo\cB_0$ when $\cA_1=\cB_1=0$,
this induces a homomorphism from
the usual Brauer group $\Br(V)$ to $\BW(V)$.
Hence $\Br(V)$ injects into $\BW(V)$ as a subgroup.
When $V$ is quasi-projective, 
$\Br(V)$ is isomorphic to the torsion subgroup of $H_\et^2(V,\Gm)$; 
this is an unpublished result of Gabber; see \cite{deJong}.
\end{example}

\begin{example}\label{checkered}
Let us write $M_{1,1}$ for $\End(\cO_V\oplus\cO_V[1])$; it is the matrix
algebra $M_2(\cO_V)$ with the {\it checkerboard grading}: 
entries in the $(i,j)$-spot have degree $(-1)^{i+j}$. 
Any ungraded Azumaya algebra $\cA_0$ is equivalent to 
the graded Azumaya algebra $M_{1,1}\cA_0=M_{1,1}\hat\oo\cA_0$.
Thus (up to Brauer equivalence)
we can always tensor with $M_{1,1}$ to assume that $\cA_1\ne0$.
\end{example}

A {\it symmetric form} $(\cE,q)$ on $V$ is a locally free sheaf $\cE$
on $V$,  equipped with a nondegenerate
symmetric bilinear form $q:\cE\oo \cE\to \cO_X$. 
They form a monoidal category, where the monoidal operation is
$(\cE_1,q_1)\perp(\cE_2,q_2) = (\cE_1\oplus \cE_2, q_1\oplus q_2).$
The Grothendieck--Witt group $\GW(V)$ is the Grothendieck group of
the monoidal category of symmetric forms $(\cE,q)$,
modulo the relation that $[(\cE,q)]=[h(\cL)]$ 
if $\cE$ has a Lagrangian $\cL$ (a subobject such that $\cL=\cL^\perp$);
$h(\cL)$ is the hyperbolic form on $\cL\oplus\cL^*$.

\begin{defn}\label{def:Witt}  
The Witt group $W(V)$ is the cokernel of the hyperbolic map
$K_0(V)\to \GW(V)$ sending $[\cE]$ to the class of its
associated hyperbolic form $h(\cE)$ on $\cE\oplus\cE^*$.

The {\it Clifford algebra} $C(q)=\Cl(\cE,q)$ of a symmetric form
$(\cE,q)$ is a graded algebra, equipped with a morphism $\cE \to C_1(q)$
so that $q$ composed with $\cO_X\!\subseteq\!C_0(q)$
is $\cE\oo \cE \smap{} C_1(q)\oo C_1(q) \smap{} C_0(q)$.
In fact, $C(q)$ is a graded Azumaya algebra on $X$.
By Knus--Ojanguren \cite{KO}, $C(q)$ vanishes in $\BW(V)$ for 
quadratic forms with a Lagrangian, so there is a
Clifford algebra functor $GW(V)\to\BW(V)$. It induces a homomorphism
$\Cl:W(V)\to\BW(V)$, because every hyperbolic form $h(\cE)$
has a Lagrangian; in fact, $\Cl(h(\cE))\cong\End(\wedge^*\cE)$.
\end{defn}

\goodbreak
\medskip{\bf Quadratic algebras}\medskip

\noindent
A {\it quadratic algebra} is a commutative $\cO_V$-algebra
of the form $Q=\cO_V\!\oplus\! L$, where $L\cong Q/\cO_V$ is an 
invertible sheaf, equipped with an isomorphism $q: L\oo L\cong\cO_V$.
(Multiplication in $Q$ is defined by $q$.)
The set of isomorphism classes of quadratic algebras is denoted $\Q(V)$.

A quadratic algebra defines an \'etale double cover of $V$.
The prototype is when $L$ is $\cO_V$, generated by a global section $z$,
and $z^2=a$ is a unit in $\cO(V)$;  we write
$\cO[z]/(z^2=a)$ for this algebra. Since $a$ is independent 
of the choice of $z$ up to squares, the map $a\mapsto \cO[z]/(z^2=a)$
defines an injection of $\cO(V)^\times/\cO(V)^{\times2}$ into $\Q(V)$.

In fact, $\Q(V)$ is isomorphic to $H_\et^1(V,\Z/2)$, with the product
$Q\circ Q' = \cO_V\times(L\oo L')$ in $\Q(V)$ corresponding to addition;
the identity element of $\Q(V)$ is $\cO[z]/(z^2-1)$. 
The surjection $\Q(V)\cong H_\et^1(V,\Z/2)\to{_2}\Pic(V)$
is the map $Q\mapsto L$.

A {\it graded quadratic algebra} is a graded $\cO$-algebra
of the form $Q=\cO\oplus L$, where $L$ is an invertible sheaf
(either in degree~0 or~1)
equipped with a product $L\oo L\map{\cong}\cO$.
A graded quadratic algebra concentrated in degree~0 (i.e., with $Q_1=0$)
is just an ordinary quadratic algebra; if $Q_1=L$, the product is
skew-symmetric.  

\begin{defn}\label{def:*}
We write $\Q_2(V)$ for the set of
isomorphism classes of graded quadratic algebras over $V$. 
There is a commutative product $\ast$ on $\Q_2(V)$ defined by 
letting $(\cO_V\oplus L)\ast (\cO_V\oplus L')$ be 
$\cO_V \oplus (L\oo L')$, with the isomorphism
$(L\oo L')\oo (L\oo L')\to\cO$ given by the usual sign convention.
This makes $\Q_2(V)$ into an abelian group.
\end{defn}

\begin{example}\label{ex:*} 
If $z_1$ and $z_2$ are in degree~1, 
and $z$ is in degree~0, then
\[
\cO[z_1]/(z_1^2=a_1)\ast\cO[z_2]/(z_2^2=a_2) =
\cO[z]/(z^2=-a_1a_2).
\]
\end{example}

By construction, there is a homomorphism $\pi:\Q_2(V)\to H^0(V,\Z/2)$
sending $Q_0\oplus Q_1$ to the rank of $Q_1$ (0 or 1 on each 
connected component of $V$), and the kernel of $\pi$
is isomorphic to $\Q(V)$. We may consider $\Q_2(V)\cong H_\et^1(V,\Z/2)$ 
to be a ring in which $\Q(V)$ is a square-zero ideal. In summary,
there is an extension
\begin{equation}\label{eq:Q_2}
0 \to H_\et^1(V,\Z/2) \to \Q_2(V) \map{\pi} H_\et^0(V,\Z/2) \to 0.
\end{equation}
(See \cite[IV.3.3]{Bass-Tata} or \cite[6.2.2]{Knus}.)
\medskip

The {\it graded centralizer} of a subsheaf $S$ of 
homogeneous elements in $\cA$ is the $\cO_V$-algebra 
$\widehat{C}(S)=\widehat{C}_0\oplus\widehat{C}_1$
such that $\widehat{C}_i$ commutes with $S_j$ up to $(-1)^{i+j}$.
We write  $\hatZ(\cA)$ for the graded centralizer of  $S=M_{1,1}\cA_0$
in the algebra $M_{1,1}\cA$ of Example \ref{checkered}.
If $\cA_1$ has strictly positive rank, then the graded centralizer 
$\widehat{C}(\cA_0)$ of $\cA_0$ in $\cA$ is isomorphic to $\hatZ(\cA)$.

\begin{theorem}\label{BW}
If $\cA$ is a graded Azumaya algebra, then
$\hatZ(\cA)$ is a graded quadratic algebra, and
$\hatZ:\BW(V)\to\Q_2(V)$ is a homomorphism:
$\hatZ(\cA\hat\oo \cB)\cong\hatZ(\cA)\ast\hatZ(\cB)$.
The inclusion of Example \ref{ex:Br}
and the function $\hatZ$ fit into an exact sequence
\[
0 \to \Br(V) \to \BW(V) \map{\hatZ} \Q_2(V) \to 0.
\]
Thus the group $\BW(V)$ has a filtration by subgroups whose associated
graded groups are
\[ 
H^0(V,\Z/2),\quad H_\et^1(V,\Z/2) \quad\textrm{~and~}\quad 
\Br(V)\cong{_\tors}H_\et^2(V,\Gm).
\]
\end{theorem}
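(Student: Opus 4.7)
The proof is a straightforward globalization of the classical case of rings, treated by Bass and Small and by Knus \cite[III.6]{Knus}: every step is \'etale-local on $V$, and the global conclusion assembles by sheafification.

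First I would check that $\hatZ(\cA)$ is a graded quadratic algebra and that $\hatZ$ is multiplicative. Both claims are \'etale-local, so they reduce to the case of a strictly Henselian local base. Over such a base the structure theory of graded Azumaya algebras presents $\cA$, up to Brauer equivalence, as a graded matrix algebra tensored with a standard rank-$2$ piece $R[z]/(z^2-a)$ with $z$ in degree~$0$ or~$1$. An explicit computation then gives $\hatZ(\cA)\cong R\oplus R\cdot z$, the invertible summand sitting in the expected degree. The product formula $\hatZ(\cA\hat\oo\cB)\cong\hatZ(\cA)\ast\hatZ(\cB)$ follows by combining two such normal forms; the Koszul sign in \eqref{eq:hat-product} produces precisely the sign recorded in Example \ref{ex:*}. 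Finally, $\hatZ$ descends to $\BW(V)$ via the local computation $\hatZ(\End(\cE))\cong\cO\oplus\cO$ with $z^2=1$, the identity of $\Q_2(V)$.

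For surjectivity of $\hatZ$, I would observe that any graded quadratic algebra $Q$ on $V$ is itself a graded Azumaya algebra: the map $Q\hat\oo Q^{\op}\to \End(Q)$ is an isomorphism by a direct rank-$2$ calculation, and the defining formula returns $\hatZ(Q)=Q$. Thus $Q\mapsto [Q]$ is a section of $\hatZ$ on the level of sets.

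The content of the theorem is the exactness in the middle. The inclusion $\Br(V)\subseteq\ker(\hatZ)$ is immediate from the computation above applied to $M_{1,1}\cA_0$ of Example \ref{checkered}, and injectivity of $\Br(V)\to\BW(V)$ is Example \ref{ex:Br}. For the reverse inclusion $\ker(\hatZ)\subseteq\Br(V)$, suppose $\hatZ(\cA)$ is trivial in $\Q_2(V)$; replacing $\cA$ by $\cA\hat\oo\hatZ(\cA)^{\op}$ via the section above, I may assume $\hatZ(\cA)\cong\cO\oplus\cO$ with $z^2=1$. Then locally on $V$ the algebra $\cA$ is Brauer-equivalent to an ungraded Azumaya algebra in the strict sense, and I expect this final step to be the main obstacle: gluing the local ungraded representatives into a single global one requires a descent argument whose obstructions live in $H^0_\et(V,\Z/2)$ (the parity data) and $H^1_\et(V,\Z/2)$ (the invertible sheaf $L$)---precisely the data encoded in $\Q_2(V)$ and killed by the hypothesis $\hatZ(\cA)=1$. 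Once this is carried out, combining the resulting exact sequence with \eqref{eq:Q_2} and Gabber's identification $\Br(V)\cong{_\tors}H^2_\et(V,\Gm)$ yields the asserted three-step filtration on $\BW(V)$.
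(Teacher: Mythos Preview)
The paper does not actually prove this theorem; after stating it, the text simply cites the affine case (Bass \cite[IV.4.4]{Bass-Tata}, corrected by Small \cite[7.10]{Small}, and Knus \cite[6.4.7]{Knus}) and treats the passage to schemes as routine. Your plan of reducing \'etale-locally to the known affine structure theory is therefore exactly what the paper intends, and your overall outline matches the argument the paper later spells out for the Real analogue in Theorem~\ref{RBW}.

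There is, however, a genuine error in your surjectivity step. A graded quadratic algebra $Q=\cO_V\oplus L$ concentrated in degree~$0$ is \emph{not} a graded Azumaya algebra: it is commutative, so $Q^{\op}=Q$, and the map $Q\hat\oo Q\to\End(Q)$ lands in the rank-$2$ submodule of multiplications by elements of $Q$, hence cannot be an isomorphism onto the rank-$4$ module $\End(Q)$. The fix is the one the paper uses in the proof of Theorem~\ref{RBW}: when $Q_1=0$, form $A=Q\oplus Qu$ with $u$ in degree~$1$, $u^2=-1$, and $\lambda u=-u\lambda$ for $\lambda\in L$; this $A$ is a graded Azumaya algebra with $\hatZ(A)=Q$ (cf.\ \cite[I(1.3.7)]{Knus}). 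With this two-case construction your set-theoretic section survives and surjectivity follows.

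A smaller point on exactness in the middle: if $\hatZ(\cA)$ is already the identity of $\Q_2(V)$, tensoring by $\hatZ(\cA)^{\op}$ is vacuous, so that reduction does no work. The actual content---that a graded Azumaya algebra with trivial $\hatZ$ is Brauer--Wall equivalent to an ungraded one---is what the cited affine references establish, and it globalizes directly because the trivialization of $\hatZ(\cA)$ is already a global datum; the descent obstruction you anticipate does not materialize.
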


The extension in 
Theorem \ref{BW} can be non-trivial;
for example, if $V=\Spec(\R)$ we have $\BW(V)=\Z/8$.

Theorem \ref{BW} was first asserted by Bass in \cite[IV.4.4]{Bass-Tata}
when $V=\Spec(R)$, using $\widehat{C}(\cA_0)$; this was corrected
in \cite[7.10]{Small}.
Another reference for this result (in the affine setting) is
Knus' book, \cite[6.4.7]{Knus}. We remark that Wall always assumes
that $\cA_1\ne0$ in \cite{Wall1963}.

\begin{remark}\label{BW-Br}
Suppose for simplicity that $V$ is connected.
The composition $\pi\circ\hatZ:\BW(V)\to\Z/2$ is the parity;
we say that $\cA$ is {\it even} or {\it odd}, according to
whether its parity is~0 or~1. 

There is a set-theoretic section $u:\BW(V)\to\Br(V)$
of the inclusion. If $\cA$ is even, forgetting the
grading yields an Azumaya algebra which we write as $u\cA$;
if $\cA$ is odd, $\cA_0$ is an Azumaya algebra and we set
$u\cA=\cA_0$. This was first observed by Wall in \cite[Thm.\,1]{Wall1963}.
\end{remark}


\bigskip
\section{The Real Brauer group}\label{sec:RGB}

In this section, we introduce Real vector bundles, 
the Real Brauer group $\RBr(X)$ and the Real graded Brauer group $\RGB(X)$.

Let $X$ be a topological space with involution $\sigma$.
By a {\it Real vector bundle} on $X$ we mean a complex vector bundle
$E$ with an involution $\sigma$ compatible with the involution on $X$
and such that for each $x\in X$ the isomorphism
$\varphi:E_x\to E_{\sigma x}$ is $\C$-antilinear. 
Following Atiyah \cite{Atiyah}, we write $KR(X)$
for the Grothendieck group of Real vector bundles on a compact space $X$.
Since the tensor product of Real vector bundles is a Real vector bundle,
$KR(X)$ is a ring.  

We write $\C_X$ for the trivial Real vector
bundle $X\times\C$ with $\sigma(x,z)=(\sigma(x),\bar{z})$; 
$E\oo\C_X\cong E$ for all $E$. We will write $\C_X(1)$ for 
the complex vector bundle $X\times\C$ with the Real vector bundle 
structure $\sigma(x,z)=(\sigma(x),-\bar{z})$; 
$\C_X$ and $\C_X(1)$ differ if $G$ has a fixed point $x$ on $X$, 
as they have different signatures there.

By a {\it Real algebra} on $X$ we mean an algebra object in the
category of Real vector bundles, i.e., a Real vector bundle $A$
with a map $A\oo A\to A$ and a global section $1$ making each fiber
$A_x$ into a $\C$-algebra, with each $A_x\to A_{\sigma x}$
a map of $\R$-algebras. The tensor product of Real algebras on $X$ is the
tensor product of the underlying Real vector bundles.

If $E$ is a Real vector bundle, then
$\End(E)$ is a Real algebra whose fibers are matrix algebras;
the map $\End(E_x)\to \End(E_{\sigma x})$ sends $\eta$ to
$a\mapsto\overline{\eta(\bara)}$; see \cite{Atiyah}.

\begin{defn}
A {\it Real Azumaya algebra} on $X$ is a Real algebra $A$ such that
$A\oo A^{\op}\map{\cong}\End(A)$.
The {\it Real Brauer group} $\RBr(X)$ is the set of equivalence
classes of Real Azumaya algebras on $X$,  where equivalence is
generated by making $A$ and $A\oo\End(E)$ equivalent for every
Real vector bundle $E$. It is an abelian group under
the product $\oo$ with identity $\C_X$.
\end{defn}

A {\it Real graded algebra} $A=A_0\oplus A_1$ on $X$ is just a 
$\Z/2$-graded algebra object in the category of Real vector bundles, 
and the graded tensor product $A\hat\oo B$ is defined as in 
\eqref{eq:hat-product}.
A {\it Real graded Azumaya algebra} on $X$ is a Real graded algebra 
$A=A_0\oplus A_1$ such that $\varphi:A\hat\oo A^{\op}\smap{\cong}\End(A)$.
(As in Section \ref{sec:BW}, $A^\op$ is the Real graded bundle
with product $a\ast b=(-1)^{ij}ba$, and $\varphi(a\oo b)(c)=(-1)^{jk}acb$.)

\begin{defn}\label{def:RGB}
The {\it Real graded Brauer group} $\RGB(X)$ is the set of equivalence
classes of Real Azumaya algebras on $X$, where `equivalence' is
generated by making $A$ and $A\hat\oo\End(E)$ equivalent for every graded
Real vector bundle $E$. It is an abelian group with the
product $\hat\oo$; as in the definition
\ref{def:BW} of $\BW$, the identity is $\C_X$. As in Example \ref{checkered},
$\C_X$ is equivalent to $M_{1,1}\C_X$ with the checkerboard grading.
\end{defn}

\begin{example}\label{ex:RBr}
A graded Azumaya algebra with $A_1=0$ is the same as 
an ungraded Azumaya algebra on $X$.  As in Example \ref{ex:Br},
these form a subgroup of $\RGB(X)$ isomorphic to the
Real Brauer group $\RBr(X)$.
\end{example}

\begin{example}\label{Br-RBr}
If $V$ is a variety over $\R$, the complex analytic space $V_\top$
associated to $V(\C)$ has an involution (complex conjugation)
and there is a natural homomorphism $\Br(V)\to\RBr(V_\top)$.
Indeed, every locally free sheaf $\cE$ on $V$ naturally determines a 
Real vector bundle $E$ on $V_\top$, and 
$\End(\cE)$ determines $\End(E)$. This is compatible with products,
so every Azumaya algebra $\cA$ on $V$ naturally determines a 
Real Azumaya algebra $A$ on $V_\top$.

The same construction shows that 
every graded Azumaya algebra $\cA$ on $V$ naturally determines a 
Real graded Azumaya algebra $A$ on $V_\top$,
compatible with products. Hence we have a natural homomorphism 
$\BW(V)\to\RGB(V_\top)$, compatible with the map $\Br(V)\to\RBr(V_\top)$.
\end{example}

\medskip\begin{center}{\it Connection to $WR$}\end{center}\medskip

\noindent
Recall from \cite{KSW} that the Real Grothendieck-Witt group $GR(X)$ is
the Grothendieck group of the monoidal category of 
symmetric forms $(E,q)$, where $E$ is a Real vector bundle
on $X$ and $q:E\oo E\to\C_X$ is a nondegenerate symmetic bilinear form,
and the monoidal operation is
$(E_1,q_1)\perp(E_2,q_2) = (E_1\oplus E_2, q_1\oplus q_2).$

\begin{example}\label{ex:Clifford}
Let $(E,q)$ be a Real vector bundle equipped with a symmetric bilinear form
$q:E\oo E\to \C_X$.
The (complex) {\it Clifford algebra} $C(q)=\Cl(E,q)$ is
a Real graded algebra, equipped with a morphism $E \to C_1(q)$
so that $q$ composed with $\C_X\!\subseteq\!C_0(q)$
is the form $E\oo E \smap{} C_1(q)\oo C_1(q) \smap{} C_0(q)$.
Since the fiber of $C(q)$ over $x$ is the usual Clifford algebra 
of $(E_x,q_x)$, each fiber is a graded Azumaya algebra over $\C$.
It follows that $C(q)$ is a Real graded Azumaya algebra on $X$.
In fact, $\Cl$ is a homomorphism $GR(X)\to\RGB(X)$.
\end{example}

\begin{defn}
The Real Witt group $WR(X)$ of a compact $G$-space $X$
is the cokernel of the hyperbolic map $KR(X)\smap{h} GR(X)$,
sending $[E]$ to the class of its associated hyperbolic form, $h(E)$.

The map $GR(X)\to \H_G^0(X,\Z)$, sending $(E,q)$ to its rank,
induces a map $WR(X)\to \H_G^0(X,\Z/2)\cong H^0(X/G,\Z/2)$; we write 
$I(X)$ for the kernel, which consists of forms of even rank.
\end{defn}

\begin{subremark}\label{GR=KOG}
$WR(X)$ is also the cokernel of the forgetful map $u:KR(X)\to KO_G(X)$.
Indeed, by \cite[Thm.\,2.2]{KSW}, there is an isomorphism $GR(X)\cong KO_G(X)$
identifying the hyperbolic map with $u$.
\end{subremark}


\begin{lemma}\label{C-homom}
The Clifford algebra map $\Cl:GR(X)\to\RGB(X)$ induces 
a homomorphism $WR(X) \to \RGB(X)$.
\end{lemma}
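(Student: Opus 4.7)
The plan is to show that for every Real vector bundle $E$ on $X$, the Clifford algebra $\Cl(h(E))$ of the associated hyperbolic form vanishes in $\RGB(X)$. Since $WR(X)$ is the cokernel of the hyperbolic map $h:KR(X)\to GR(X)$, and since $\Cl$ is already a monoidal (hence additive on direct sums) functor $GR(X)\to\RGB(X)$ by Example \ref{ex:Clifford}, this will force $\Cl$ to factor through $WR(X)$.

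First I would recall the classical fiberwise computation (which underlies the algebraic statement in Definition \ref{def:Witt}): for a finite-dimensional complex vector space $V$, the Clifford algebra of the hyperbolic form on $V\oplus V^*$ is isomorphic as a $\Z/2$-graded $\C$-algebra to $\End(\wedge^* V)$. The isomorphism is determined by sending $v\in V$ to exterior multiplication $\omega\mapsto v\wedge\omega$ and $f\in V^*$ to the contraction $\iota_f$; these satisfy $(v+f)^2=f(v)$, which is precisely the Clifford relation for the hyperbolic form.

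Second I would globalize this formula. For a Real vector bundle $E$ on $X$, the dual $E^*$ is Real with involution $f\mapsto \overline{f\circ\sigma^{-1}}$, so $E\oplus E^*$ is a Real bundle and the evaluation pairing $q(v,f)=f(v)$ is a nondegenerate Real symmetric bilinear form, namely $h(E)$. The exterior algebra $\wedge^* E$ is a $\Z/2$-graded Real vector bundle (graded by the parity of the exterior power, with Real structure inherited from $E$). Because exterior multiplication and contraction are natural constructions, the fiberwise isomorphisms of the previous paragraph assemble into an isomorphism $\Cl(h(E))\smap{\cong}\End(\wedge^* E)$ of Real graded algebras over $X$. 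By Definition \ref{def:RGB}, the right-hand side is trivial in $\RGB(X)$, so $\Cl(h(E))=0$.

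The one point that is not purely formal is checking that the fiberwise isomorphism respects the Real structure, i.e.\ that the map $E\oplus E^*\to\End(\wedge^* E)$ given by exterior multiplication and contraction is $\C$-antilinear on fibers in the sense dictated by the involutions on $E$, $E^*$, and $\wedge^* E$. This is the main (and essentially only) technical verification; it follows because both operations are defined by universal formulas that commute with any $\C$-antilinear isomorphism of the underlying data. Granted this compatibility, the lemma is immediate.
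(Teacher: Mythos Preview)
Your proposal is correct and follows essentially the same approach as the paper: both arguments first note that $\Cl$ is a homomorphism $GR(X)\to\RGB(X)$ (you cite Example \ref{ex:Clifford}; the paper reproves it via $\Cl(q_1\perp q_2)\cong\Cl(q_1)\hat\oo\Cl(q_2)$ fiberwise), and then both kill the hyperbolic image by observing $\Cl(h(E))\cong\End(\wedge^*E)$. Your version is more detailed on the latter point, supplying the explicit exterior-multiplication/contraction isomorphism and checking Real compatibility, whereas the paper simply asserts the identification.
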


\begin{proof}
The tensor product $\Cl(q_1\perp q_2)$ is isomorphic to
$\Cl(q_1)\hat\oo \Cl(q_2)$, because it is true fiberwise.
This proves that the functor $\Cl$ induces a group homomorphism 
from $GR(X)$ to $\RGB(X)$. The composition $KR(X)\to\RGB(X)$
is trivial because it sends the class of a Real vector bundle
$E$ to $\End(\wedge^*E)$, 
where $\wedge^*E$ is the exterior algebra of $E$.
\end{proof}

\medskip\begin{center}{\it Two test cases}\end{center}\medskip

\begin{example}\label{fixed:RGB}
When the involution on $X$ is trivial, every Real vector bundle
has a canonical form $E\oo\C$, where $E$ is an $\R$-linear bundle
and the involution acts by complex conjugation.  In this case, an
Azumaya algebra has the form $A'\oo\C$, where $A'$ is an
$\R$-linear algebra bundle on $X$, and similarly for Real graded
Azumaya algebras.  The groups of equivalence classes of the $A'$
are isomorphic to $\RBr(X)$ and $\RGB(X)$; they were studied by 
Donovan and Karoubi, who showed in \cite[Thms.\,3,6]{DonovanKaroubi}
that (for connected $X$)
$$\RBr(X)\cong\Z/2\oplus H^2(X,\Z/2),$$ 
$\RGB(\pt)=\Z/8$ is a summand of $\RGB(X)$, and there is an extension
\begin{equation*}
0 \to H^2(X,\Z/2) \to \RGB(X)\to \Z/8 \oplus H^1(X,\Z/2) \to 0.
\end{equation*}

\noindent
By \cite[pp.\,10-11]{DonovanKaroubi}, if $a\in H^1(X,\Z/2)$ then
$a+a=\beta(a)\in H^2(X,\Z/2)$, where 
$\beta:H^1(X,\Z/2)\to H^2(X,\Z/2)$ is the Bockstein. Thus the number of 
$\Z/4$ summands in $\RGB(X)$ equals the rank of the Bockstein.  
For example, this implies that $\RGB(\R\bP^2)\cong\Z/8\oplus\Z/4$.
By way of comparison, we showed in \cite[Ex.\,2.5]{KSW} that 
$WR(\R\bP^2)\cong\Z\oplus\Z/4$. 

If $S^n$ denotes the $n$-sphere for $n>2$, with $G$ acting trivially, 
then $\RGB(S^n)\cong\Z/8$.
If $H$ is the rank~2 $\R$-linear $G$-bundle on $S^2$ underlying the
canonical (Hopf) complex line bundle, then its Clifford algebra
$\Cl(H)$ is the nontrivial element of $\RBr(S^2)$
vanishing on $\RBr(\pt)\cong\Z/2$.
\end{example}

\begin{proposition}\label{free:RGB}
If $X=G\times Y$ then 
\[
\RGB(X)\cong H^0(Y,\Z/2)\oplus H^1(Y,\Z/2)\oplus\ {_\tors}H^3(Y,\Z).
\]
\end{proposition}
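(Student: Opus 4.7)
\emph{Plan.} The strategy is to reduce to a non-equivariant complex computation and then invoke the filtration from Sections \ref{sec:RGB}--\ref{sec:Q(X)}. Since $G$ acts freely on $X=G\times Y$ by translation on the first factor, restriction to $\{e\}\times Y$ gives a bijection between Real (graded) vector bundles on $X$ and complex (graded) vector bundles on $Y$ (the involution then determines the restriction to $\{\tau\}\times Y$ by conjugation), and the same bijection takes (graded) Real Azumaya algebras to (graded) complex Azumaya algebras. Hence $\RGB(G\times Y)$ is isomorphic to the $\Z/2$-graded complex topological Brauer--Wall group $\BW^{\top}_{\C}(Y)$, which is what I will actually compute.

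Because the $G$-cover $X\to Y$ is trivial, the Borel space $X_G$ is homotopy equivalent to $Y$ and every local coefficient system on $X_G\simeq Y$ coming from a $G$-module $\Lambda$ is constant with value the underlying abelian group of $\Lambda$. Thus $\H_G^n(X,\Lambda)\cong H^n(Y,\Lambda)$ for all $n$; combined with the identification $\RBr(X)\cong{_\tors}\H_G^3(X,\Z(1))$ from Section \ref{sec:RBr}, the associated graded of the $\RGB$ filtration becomes $H^0(Y,\Z/2)$, $H^1(Y,\Z/2)$, and ${_\tors}H^3(Y,\Z)$. It remains to split the two extensions in the filtration.

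For the inner extension $0\to\H_G^1(X,\Z/2)\to Q_2(X)\to\H_G^0(X,\Z/2)\to 0$, I would define a section sending a parity class on a component $C\subseteq Y$ to the graded quadratic algebra $\C_C[z]/(z^2-1)$ with $z$ in degree~$1$. By Example \ref{ex:*}, the product of two such elements is $\C[z]/(z^2+1)$ with $z$ in degree~$0$; over $\C$ this is isomorphic to the trivial quadratic algebra $\C[z]/(z^2-1)$ via $z\mapsto iz$, since $-1=i^2$ is a square, so the assignment is a group homomorphism. For the outer extension $0\to\RBr(X)\to\RGB(X)\to Q_2(X)\to 0$, one lifts a parity-$1$ class $Q=\C\oplus L$ to $Q$ itself (the Clifford algebra of a rank-$1$ form, hence a graded Azumaya algebra), and a parity-$0$ class $Q=\C\oplus L$ to the Clifford algebra $C(q_L\perp q_\C)$ of the rank-$2$ orthogonal sum; a direct calculation shows that the graded centralizer $\hatZ$ of this Clifford algebra recovers $Q$.

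The hard part will be verifying that this second section is genuinely a group homomorphism, not just a set-theoretic lift: two Clifford algebras with the same $\hatZ$ agree in $\BW^{\top}_{\C}(Y)$ only up to an element of $\RBr(X)={_\tors}H^3(Y,\Z)$, so one must check that the cocycle recording these discrepancies vanishes. Over $\C$ this reduces to the Morita-triviality of rank-$2$ hyperbolic forms together with $\Br(\C)=0$, which kills the potential obstruction; packaged homotopically, the statement is that the spectrum representing $\BW^{\top}_{\C}$ has trivial $k$-invariants and hence splits as a product of Eilenberg--Mac\,Lane spectra, yielding the claimed direct sum decomposition.
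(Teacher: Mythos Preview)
Your reduction to the complex graded Brauer group $\mathrm{GBrU}(Y)$ of Donovan--Karoubi is exactly how the paper begins, and your splitting of the inner extension $0\to H^1(Y,\Z/2)\to\Q_2(X)\to H^0(Y,\Z/2)\to 0$ via $z\mapsto iz$ is correct (equivalently, $\R_X(1)$ is trivial when $X=G\times Y$; cf.\ Example \ref{Xcovers}). The gap is in the outer extension $0\to{_\tors}H^3(Y,\Z)\to\RGB(X)\to\Q_2(X)\to0$. You construct only a set-theoretic section and then wave hands: the remark that $\Br(\C)=0$ is irrelevant, since the obstruction cocycle lands in ${_\tors}H^3(Y,\Z)$, not in the Brauer group of a point; and ``the spectrum has trivial $k$-invariants'' is a reformulation of the claim, not a proof of it.

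The paper's argument is short and concrete. Donovan--Karoubi (Theorem~11 and pp.\,10--11 of \cite{DonovanKaroubi}) already supply a set-section $i:H^1(Y,\Z/2)\to\mathrm{GBrU}(Y)$ together with the explicit cocycle formula $i(a)+i(b)=i(a+b)+\tilde\beta(a\cup b)$, where $\tilde\beta$ is the integral Bockstein. Setting $a=b$ gives $2i(a)=\tilde\beta(a^2)=\tilde\beta\,\Sq^1(a)$; since $\Sq^1$ is the mod-$2$ reduction of $\tilde\beta$, the operation $\tilde\beta\,\Sq^1=\tilde\beta\circ(\mathrm{red})\circ\tilde\beta$ vanishes. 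Hence every $i(a)$ is $2$-torsion, so the $2$-torsion subgroup of the extension surjects onto the $\Z/2$-vector space $H^1(Y,\Z/2)$, and any such surjection of $\Z/2$-vector spaces admits a linear section. That is the missing idea. (A minor point: you invoke Theorem \ref{RBr} from a later section; there is no logical circularity, but the paper avoids the forward reference by citing \cite{DonovanKaroubi} directly for both the filtration and the cocycle.)
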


\begin{proof}
Real vector bundles on $X=G\times Y$ are the same as 
complex vector bundles on $Y$.
Thus $\RGB(X)$ recovers the graded Brauer group
$\textrm{GBrU}(Y)$ studied by Donovan and Karoubi in \cite{DonovanKaroubi}.
For any finite CW complex $Y$, Theorem 11 of {\it loc.\,cit.} states
that $\textrm{GBrU}(Y)$ is the direct sum of $H^0(Y,\Z/2)$ and an
extension $HU(X)$ of $H^1(Y,\Z/2)$ by the torsion subgroup
of $H^3(Y,\Z)$, equipped with a canonical section 
$i:H^1(Y,\Z/2)\to\textrm{GBrU}(Y)$.

To see that the extension splits, 
recall that, if $\tilde\beta$ is the integral Bockstein and 
$a,b\in H^1(Y,\Z/2)$, $i(a)+i(b)=i(a+b)+\tilde\beta(a\cup b)$
in $H^3(Y,\Z)$ by \cite[pp.\,10-11]{DonovanKaroubi}.
Since the cohomology operation $\tilde\beta\;\Sq^1$ is zero,
$i(a)+i(a)=\tilde\beta\;\Sq^1(a)=0$, and the extension splits.
A similar elementary argument shows that 
$\tilde\beta(a\cup b)=0$, so $i$ is a homomorphism.
\end{proof}

\begin{corollary}\label{C-curves}
If $V$ is a connected algebraic curve defined over $\C$, then
\[
\BW(V) \cong \RGB(V_\top) \cong \Z/2\oplus H^1_\et(V,\Z/2).
\]
\end{corollary}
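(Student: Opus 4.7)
The plan is to compute both groups via their respective filtrations and verify that the natural map of Example \ref{Br-RBr} realizes the isomorphism.

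Since $V$ is a $\C$-scheme, $V\oo_\R\C \cong V\sqcup\bar V$, so $V_\top$ is the free $G$-space $G\times Y$ with $Y=V(\C)$. For $V$ a connected smooth curve over $\C$, $Y$ is a connected Riemann surface of real dimension at most $2$, so $H^0(Y,\Z/2)=\Z/2$ and $H^3(Y,\Z)=0$. Proposition \ref{free:RGB} then gives
\[
\RGB(V_\top) \cong \Z/2\oplus H^1(Y,\Z/2) \cong \Z/2\oplus H^1_\et(V,\Z/2),
\]
the second isomorphism being Artin's comparison theorem for $\Z/2$-coefficients on a smooth $\C$-variety.

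For $\BW(V)$, Tsen's theorem gives $\Br(K(V))=0$ (as $K(V)$ is a $C_1$-field), and regularity of $V$ yields $\Br(V)\hookrightarrow \Br(K(V))=0$. Theorem \ref{BW} then collapses to $\BW(V)\cong \Q_2(V)$, and \eqref{eq:Q_2} becomes the extension $0\to H^1_\et(V,\Z/2)\to \BW(V)\to \Z/2 \to 0$. A splitting is provided by the class of the graded quadratic algebra $\cO_V\oplus \cO_V z$ with $z$ in degree~$1$ and $z^2=1$: by Example \ref{ex:*}, its square in $\Q_2(V)$ is $\cO_V[w]/(w^2=-1)$, which over $\C$ is isomorphic to the identity $\cO_V[w]/(w^2=1)$ via $w\mapsto iw$. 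Thus $\BW(V)\cong \Z/2\oplus H^1_\et(V,\Z/2)$.

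To conclude that the natural map $\BW(V)\to \RGB(V_\top)$ is itself the isomorphism, I would compare the two filtrations piece by piece. On the parity quotients the map is the identity $H^0(V,\Z/2)=\Z/2\to \Z/2=H^0_G(V_\top,\Z/2)$; on the middle subquotients it becomes Artin's comparison $H^1_\et(V,\Z/2)\cong H^1_G(V_\top,\Z/2)=H^1(Y,\Z/2)$; and on the bottom both $\Br(V)=0$ and $\RBr(V_\top)\cong {_\tors}\H^3_G(V_\top,\Z(1))={_\tors}H^3(Y,\Z)=0$ vanish. The five lemma then yields the isomorphism. The main---and essentially only non-routine---step will be verifying that the natural map respects the filtrations with the claimed effects on associated graded pieces, which should follow formally from the parallel graded-centralizer constructions in Theorem \ref{BW} and Sections \ref{sec:RGB}--\ref{sec:Q(X)}.
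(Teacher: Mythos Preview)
Your proposal is correct and follows essentially the same route as the paper: compute $\RGB(V_\top)$ via Proposition \ref{free:RGB} (using $\dim Y\le2$ so $H^3(Y,\Z)=0$), compute $\BW(V)$ via Theorem \ref{BW} with $\Br(V)=0$, and identify the \'etale and singular $H^1$ (the paper cites Cox, which for $V_\top=G\times Y$ is Artin's comparison). You supply details the paper leaves implicit---Tsen's theorem for $\Br(V)=0$, the explicit splitting of $\Q_2(V)$ over $\C$ via $\sqrt{-1}$, and the five-lemma argument that the natural map itself is the isomorphism---but the skeleton is the same.
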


Indeed, we have $V_\top\cong Y\times G$, where $Y=V(\C)$ is 2-dimensional.
In this case, $\Br(V)=0$ and the result follows from Cox' Theorem, 
Theorem \ref{BW} and Proposition \ref{free:RGB}.

\section{Real quadratic algebras}\label{sec:Q(X)}

In this section, $X$ will be a $G$-space.
%
By analogy with the algebraic setting, a commutative Real algebra $Q$ 
on $X$ of rank~2 is called a {\it Real quadratic algebra} if it has 
the form $\C_X\oplus L$ for a rank 1 Real vector bundle $L$ equipped with
an equivariant isomorphism $\theta:L\oo L\map{\cong}\C_X$.

\begin{defn}
The set $\Q(X)$ of isomorphism classes of Real quadratic algebras
becomes a group with product $\circ$, as in Section \ref{sec:BW}.
The identity of $\Q(X)$ for $\circ$ is the Real quadratic algebra
$Q^{(+)}\!=\C_X[u]/(u^2=1)$ with underlying Real bundle $\C_X\oplus\C_X$.
\end{defn}

\begin{subex}\label{Q+/-}
There is another Real quadratic algebra, $Q^{(-)}$, which is the 
algebra $\C_X[u]/(u^2=1)$ with underlying Real bundle
$\C_X\oplus\C_X(1)$; $Q^{(-)}$ has order~2 in $\Q(X)$.
It is useful to note that if $t=iu$ then $Q^{(-)}$ is the algebra
$\C_X[t]/(t^2=-1)$ with underlying Real bundle $\C_X\oplus\C_X$.
\end{subex}

We write $\Pic_G(X)$ for the group of rank~1 $\R$-linear
$G$-vector bundles on $X$ with product $\otimes$;
its unit `$1$' is the trivial line bundle $\R_X$. 
If $L_0$ is such a bundle,  $L_0\otimes L_0$ is trivial.
The proof of the following result
is inspired by \cite[Prop.\,1]{Kahn}.


\begin{proposition}\label{w1-bundles} 
For every $X$, 
$\Q(X)\cong\Pic_G(X)\smap{\cong}\H^1_G(X,\Z/2)$.
\end{proposition}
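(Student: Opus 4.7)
The plan is to construct an explicit isomorphism $\Phi\colon\Q(X)\to\Pic_G(X)$ from the ``positive real part'' of a Real quadratic algebra, and then to obtain $\Pic_G(X)\cong\H^1_G(X,\Z/2)$ from the classical $w_1$-classification of real line bundles on the Borel construction $X_G$.

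Given $(Q=\C_X\oplus L,\theta)$ in $\Q(X)$, set
\[
L_0 \;=\; \{v\in L : \theta(v\otimes v)\in\R_{\geq 0}\}\subset L.
\]
Fiberwise, if $0\ne v\in L_x$ and $w=\lambda v\in L_x$, then $\theta_x(w\otimes w)=\lambda^2\theta_x(v\otimes v)$, so $w\in L_{0,x}$ iff $\lambda\in\R$; each fiber $L_{0,x}$ is thus a $1$-dimensional $\R$-subspace of $L_x$. To see $L_0$ is a rank~$1$ $\R$-subbundle, take any local nonvanishing section $e_0$ of $L$ on a simply connected trivializing chart $U$: then $\theta(e_0\otimes e_0)$ is a nonvanishing function $f\colon U\to\C^{\times}$ admitting a continuous square root $g$, so $e=e_0/g$ trivializes $L_0|_U$. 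Since $\theta$ is a morphism of Real bundles and $\C_X$ carries the conjugation involution, $L_0$ is also $G$-stable, giving an object $\Phi(Q)=L_0$ of $\Pic_G(X)$.

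That $\Phi$ is a homomorphism follows from the identity $(\theta_1\otimes\theta_2)((v_1\otimes v_2)^{\otimes 2})=\theta_1(v_1\otimes v_1)\theta_2(v_2\otimes v_2)$, so the natural map $(L_1)_0\otimes_\R(L_2)_0\to(L_1\otimes_\C L_2)_0$ is a fiberwise isomorphism, and $Q^{(+)}$ evidently maps to $\R_X$. An inverse $\Psi$ sends $L_0$ to $(L_0\otimes_\R\C,\theta)$ with Real structure $\sigma_0\otimes\overline{(-)}$ and $\theta$ induced by any $G$-invariant fiberwise metric on $L_0$; existence is by averaging over the order-two group $G$, and any two metrics differ by a positive continuous function, yielding isomorphic Real quadratic algebras. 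Both compositions $\Phi\Psi$ and $\Psi\Phi$ are the identity by fiberwise inspection. For the remaining isomorphism, since $EG$ is contractible, pullback along $X\times EG\to X$ gives an equivalence between $G$-equivariant $\R$-line bundles on $X$ and $\R$-line bundles on $X_G$; the latter are classified by $w_1\in H^1(X_G,\Z/2)=\H^1_G(X,\Z/2)$. The delicate step in the plan is showing that $L_0$ is locally trivial as an $\R$-bundle---the continuous square root construction on a simply connected chart handles this---and that $\Psi$ is independent of the chosen metric up to isomorphism; once these are in place, the multiplicativity, inverse identities, and $w_1$-classification are routine or classical.
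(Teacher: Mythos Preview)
Your proof is correct, and it takes a genuinely different route from the paper's on both halves of the statement.

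For $\Q(X)\cong\Pic_G(X)$, the paper (citing \cite[2.2]{KSW}) chooses a $G$-invariant Hermitian metric $\langle\,,\,\rangle$ on $L$, defines an antilinear operator $T$ by $\langle Tu,v\rangle=\theta(u,v)$, and takes $L_0$ to be the $+1$ eigenspace of $T$. Your ``positive cone'' construction $L_0=\{v:\theta(v\otimes v)\in\R_{\ge0}\}$ is metric-free for the forward map $\Phi$ and arguably more transparent; the metric only enters when you build the inverse $\Psi$, and you correctly note that different metrics give isomorphic Real quadratic algebras. For $\Pic_G(X)\cong\H^1_G(X,\Z/2)$, the paper works sheaf-theoretically: it identifies $\Pic_G(X)$ with $\H^1_G(X,\cO_\R^\times)$ via equivariant \v{C}ech cocycles and then uses the exponential sequence $0\to\cO_\R\to\cO_\R^\times\to\Z/2\to0$ together with softness of $\cO_\R$. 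You instead pass through the Borel construction, identifying $G$-equivariant $\R$-line bundles on $X$ with ordinary $\R$-line bundles on $X_G$ and invoking the classical $w_1$ classification. Your approach is more homotopy-theoretic and leans on standard (but not entirely trivial) facts about homotopy invariance of equivariant bundles and descent along free quotients; the paper's sheaf argument is more self-contained and closer in spirit to the algebro-geometric analogues used elsewhere in the paper.
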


\begin{proof} 
We proved in \cite[2.2]{KSW} that a quadratic form $\theta$ on a 
Real line bundle $L$ determines%
\footnote{Fix a $G$-invariant Hermitian metric on $L$ and define
$T$ by $\langle Tu,v\rangle=\theta(u,v)$; $L_0$ is the family 
of +1 eigenspaces of $T$. 
The bilinear map $\theta$ is determined up to isomorphism by the choice
of an equivariant Riemannian metric on $L_0$.}
an equivariant $\R$-linear line bundle $L_0$
such that $L\cong L_0\oo\C$; the map $\sigma: L_x\to L_{\sigma x}$
sends $\lambda\oo z$ to $\sigma(\lambda)\oo\bar{z}$.
If $L=L_0\oo\C$, we recover $L_0$.
Thus $\Pic_G(X)$ and $\Q(X)$ are isomorphic.

To classify $\Pic_G(X)$, let
$\cO_\R(U)$ denote the group of continuous $\R$-valued functions on $U$;
$\cO_\R$ is a $G$-sheaf.  The usual description of a bundle using
(equivariant) \v{C}ech cocycles in $\check{\H}^1_G(X,\cO_\R^\times)$ 
as patching data (and Cartan's criterion \cite[III.2.17]{Milne})
shows that there is an
isomorphism $\Pic_G(X)\to \H^1_G(X,\cO_\R^\times)$ of abelian groups.
Consider the exact exponential sequence of equivariant sheaves:
\[ 0\to \cO_\R ~\map{\exp}~ \cO_\R^\times \to \Z/2 \to 0.  \]
Since the first sheaf is soft, hence acyclic, we get an isomorphism from
$\Pic_G(X)\cong H^1(X\!\times_G EG,\cO_\R^\times)$ to 
$H^1(X\!\times_G EG,\Z/2)\cong \H^1_G(X,\Z/2)$.
\end{proof}


\begin{example}\label{ex:sign}
Let $x\in X$ be a fixed point, and $L_0$ an $\R$-linear line bundle on $X$.
The involution on the fiber of $L_0$ over $x$ must be
multiplication by $\pm1$. 
In particular, $\Pic_G(\pt)\cong\{\pm1\}$.
If $X^G$ has $\nu$ components, this induces a natural {\it sign map} 
\[
\Pic_G(X)\map{\mathrm{sign}}\{\pm1\}^\nu \cong H^0(X^G\!,\Z/2).
\]
It is the composition of $\H^1_G(X,\Z/2)\to\H^1_G(X^G,\Z/2)$ with
\[ 
\H^1_G(X^G\!,\Z/2)\cong H^1(X^G\!\times\! BG,\Z/2)\cong
\H^0(X^G\!,\Z/2)\oplus H^1(X^G\!,\Z/2).
\]
\end{example}

\begin{subremark}\label{sign-onto}
The sign map is not always onto when $\nu\ge2$;
it is onto when $\nu=1$, as $X\to\pt$ induces the splitting
$\Pic(\pt)\to\Pic_G(X)$.  The sign map is onto when $X=X^G$,
by Example \ref{fixed:RGB}. 
It is also onto if $\dim(X)=1$,
since in that case $\H_G^1(X,\Z/2)\to\H_G^1(X^G,\Z/2)$ is onto.
\end{subremark}

\begin{lemma}\label{H_G^1}
Let $\Pic^0_G(X)$ denote the kernel of the sign map.
There is a natural isomorphism $w:\Pic^0_G(X)\smap{\cong} H^1(X/G,\Z/2)$
and hence an exact sequence
\[
0 \to H^1(X/G,\Z/2) \to \H^1_G(X,\Z/2) \map{\mathrm{sign}} \{\pm1\}^\nu.
\]
\end{lemma}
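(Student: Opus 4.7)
My plan is to identify $\H^1_G(X,\Z/2)$ with $\Pic_G(X)$ via Proposition \ref{w1-bundles}, so that $\Pic^0_G(X)$ becomes the subgroup of equivariant $\R$-linear line bundles $L_0$ on $X$ whose involution acts trivially on the fibre $(L_0)_x$ over every point $x\in X^G$. The isomorphism $w$ will then be defined geometrically by descent: send $[L_0]$ to $w_1(L_0/G)\in H^1(X/G,\Z/2)$, where $L_0/G\to X/G$ is the quotient bundle.

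The first key step is verifying that $L_0/G$ is a genuine $\R$-linear line bundle on $X/G$. Over the free locus $(X\setminus X^G)/G$ this is routine. Near a fixed point $x\in X^G$, I would use $G$-equivariant local triviality to pick a neighborhood $U$ and an equivariant isomorphism $L_0|_U\cong U\times\R$; the hypothesis that the sign is trivial on the component of $X^G$ through $x$ lets me arrange, after shrinking $U$, that $G$ acts as the identity on the $\R$-factor, so that $(L_0|_U)/G\cong (U/G)\times\R$ is a local trivialization on $X/G$. These patch to a line bundle, and $L_0\mapsto L_0/G$ is a tensor homomorphism because the quotient commutes with tensor product of line bundles.

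Next I would construct the inverse via pullback along $q\colon X\to X/G$. For any line bundle $F$ on $X/G$, the bundle $q^*F$ is canonically $G$-equivariant, and since the identification $(q^*F)_x=F_{q(x)}$ is tautological at each $x\in X^G$, the induced $G$-action on these fibres is trivial; hence $q^*F\in\Pic^0_G(X)$. The two assignments are mutually inverse: fibrewise analysis shows $q^*(L_0/G)\cong L_0$ as $G$-equivariant bundles, because over a non-fixed point the two sheets of the orbit are identified via $g$, while over a fixed point the trivial action on the fibre of $L_0$ is exactly what is recovered by pullback; conversely $(q^*F)/G\cong F$ since $G$ acts trivially on the fibres of $q^*F$. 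This gives the isomorphism $w\colon\Pic^0_G(X)\smap{\cong} H^1(X/G,\Z/2)$, and the exact sequence follows immediately from the definition of $\Pic^0_G(X)$ as the kernel of the sign map from Example \ref{ex:sign}.

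The principal obstacle is the descent step at fixed points: one must combine $G$-equivariant local triviality with the sign hypothesis to produce an honest local trivialization of $L_0/G$ on $X/G$, rather than only on its free locus. Once this is in hand, naturality of $w$, the inverse check, and the resulting four-term exact sequence are formal.
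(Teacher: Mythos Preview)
Your proposal is correct and follows essentially the same approach as the paper: define $w$ by descending $L_0$ to a line bundle $L_0/G$ on $X/G$ and taking its first Stiefel--Whitney class, with inverse given by pullback along $q:X\to X/G$. The paper's proof is in fact terser than yours --- it simply asserts that the identifications $L_x\cong L_{\sigma x}$ allow descent without spelling out the local-triviality argument at fixed points --- so your treatment of the ``principal obstacle'' is a welcome expansion of what the paper leaves implicit.
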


\begin{proof}
The map $w$ is defined as follows.
If $L$ is a rank~1 $\R$-linear $G$-bundle on $X$ with trivial sign, the 
identifications $L_x\cong L_{\sigma x}$ imply that $L$
descends to a line bundle $L/G$ on $X/G$, and the rank~1 $\R$-linear
bundle $L/G$ on $X/G$ is classified by an element $w(L)$ of
the group $H^1(X/G,\Z/2)$.
Conversely, $w\in H^1(X/G,\Z/2)$ determines a line bundle on $X/G$,
and its pullback along $X\to X/G$ is an equivariant line bundle on $X$ 
with trivial sign, i.e., an element of $\Pic_G^0(X)$.
\end{proof}

\begin{subex}\label{Xcovers}
Let $\R_X(1)$ denote the trivial $\R$-linear line bundle 
with $G$ acting by $-1$. 
By Proposition \ref{w1-bundles}, the Real quadratic algebra 
corresponding to $\R_X(1)$ is the algebra $Q^{(-)}$
of Example \ref{Q+/-}.

If $X$ is connected, then $\R_X(1)$ is nontrivial in $\Pic_G(X)$. 
This is clear if $X^G\ne\emptyset$, as the sign of $\R_X(1)$ is
$-1$ on each component of $X^G$.
When $X^G\!=\emptyset$, it is a nonzero element of 
$\Pic^0_G(X)\cong H^1(X/G,\Z/2)$. This is because $X\to X/G$ is 
a nontrivial covering space (as $X$ is connected), covering spaces
with group $G$ are classified by elements of $H^1(X/G,\Z/2)$, and
we showed in Example 2.7 of \cite{KSW} that the isomorphism of
Proposition \ref{w1-bundles} sends $\R_X(1)$ 
to the element classifying this particular cover.

The hypothesis that $X$ be connected is necesssary;
when $X=Y\times G$, $\R_X(1)$ is trivial in
$\Pic_G(X)\cong H^1(Y,\Z/2)$. 
\end{subex}

\begin{subex}\label{theta=t}
Let $T=S^{1,1}$ be the unit circle in $\C$, with the induced complex
conjugation as involution. 
The trivial Real line bundle $\C_T$ carries a canonical
Real symmetric form $\theta=t$, which is multiplication by $t$ on 
the fiber over $t\in T$.  The corresponding quadratic algebra is
$\C_X[z]/(z^2=t)$, and the corresponding nontrivial element of
$\Pic_G(T)\cong\{\pm1\}^2$ is the $\R$-linear subbundle of $\C_T$ 
whose fiber over $t$ is $it\cdot\R$.
The sign of this element is different at the two fixed points of $T$.
\end{subex}

\smallskip
\begin{defn}\label{defn:Q(X)}
A {\it Real graded quadratic algebra} on $X$ is a graded-commutative
Real algebra $Q=Q_0\oplus Q_1$ of the form $\C_X\oplus L$ for a 
Real vector bundle $L$ of rank~1, equipped with
an isomorphism $L\oo L\map{\cong}\C_X$.
($L$ may be in degree~0 or~1.)

We shall write $\Q_2(X)$ for the set of isomorphism classes
of Real graded quadratic algebras on $X$. 
As in Definition \ref{def:*}, there is a commutative product $\ast$
on $\Q_2(X)$ making it into a commutative group, with
identity $Q^{(+)}$.
By definition, $\Q(X)$ is a subgroup of $\Q_2(X)$. 
\end{defn}

There is a homomorphism 
$\Q_2(X) \smap{\pi} \H_G^0(X,\Z/2)\cong H^0(X/G,\Z/2)$ 
sending a Real graded quadratic algebra
to the rank of its degree~1 component.

\begin{example}\label{ex:Z/4}
We have $\pi(\Clone)=1$, where 
$\Clone$ denotes the Clifford algebra of the quadratic bundle 
$(\C_X,1)$ with its usual grading. 
As in Example \ref{ex:*}, $\Clone\ast\Clone$ is the Real
quadratic algebra $Q^{(-)}$ of Example \ref{Q+/-}
concentrated in degree~0.

For completeness, we note that $\Clone\ast Q^{(-)}$ is the
Clifford algebra $\Cl_{\langle-1\rangle}$ of the quadratic bundle 
$(\C_X,-1)\cong(\C_X(1),1)$, and $\Clone\ast\Cl_{\langle-1\rangle} = Q^{(+)}$.
\end{example}

Here is the analogue of the extension \eqref{eq:Q_2}.
The proof shows that the group operation $\ast$ in $\Q_2(X)$ is 
determined by the fact that $\Clone\ast\Clone$ is the 
Real quadratic algebra $Q^{(-)}$ of Example \ref{Q+/-}. 

\begin{proposition}\label{ext:Q_2}
There is a group extension
\begin{equation*}
0 \to \H_G^1(X,\Z/2) \to \Q_2(X) \map{\pi} \H_G^0(X,\Z/2) \to 0.
\end{equation*}
If $X$ is connected, so $\H_G^0(X,\Z/2)\cong\Z/2$, 
this is a nontrivial extension, i.e., $\Q_2(X)$ has exponent~4.
In particular, $\Q_2(\pt)\cong\Z/4$.
\end{proposition}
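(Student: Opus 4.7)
\medskip

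\noindent\textbf{Proof plan for Proposition \ref{ext:Q_2}.}
The plan is to verify the sequence term by term and then produce an explicit element of order~4 when $X$ is connected. First, the map $\pi$ is a homomorphism: under the product $\ast$, the degree~1 part of $(\C_X\oplus L)\ast(\C_X\oplus L')$ is $L\oo L'$ when exactly one of $L,L'$ sits in degree~1 (so ranks add mod~2) and vanishes otherwise, which is precisely addition in $\H_G^0(X,\Z/2)\cong H^0(X/G,\Z/2)$.

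Next, exactness at $\Q_2(X)$ is essentially tautological: a Real graded quadratic algebra with $\pi=0$ has $L$ concentrated in degree~0 on every component of $X/G$, so it is an ordinary Real quadratic algebra; conversely every element of $\Q(X)$ gives such a graded algebra. Thus $\ker(\pi)=\Q(X)$, and Proposition~\ref{w1-bundles} identifies this with $\H_G^1(X,\Z/2)$. For surjectivity of $\pi$, given a class in $H^0(X/G,\Z/2)$ corresponding to a clopen subset $U\subseteq X/G$ with preimage $X_U\subseteq X$, I would construct the Real graded quadratic algebra obtained by patching $\Clone|_{X_U}$ (which has rank-one degree-1 part) on $X_U$ with $Q^{(+)}|_{X\setminus X_U}$ (rank-one degree-0 part) on the complement; this is a well-defined Real graded quadratic algebra with $\pi$-image the given class.

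To show the extension is nontrivial when $X$ is connected, I would exhibit an element of order~4. Example~\ref{ex:Z/4} records that $\Clone\ast\Clone = Q^{(-)}$, so it suffices to check that $Q^{(-)}$ is nontrivial in the kernel $\H_G^1(X,\Z/2)$. Under the isomorphism of Proposition~\ref{w1-bundles}, $Q^{(-)}$ corresponds to the $\R$-linear $G$-line bundle $\R_X(1)$, and Example~\ref{Xcovers} gives precisely what I need: for connected $X$, $\R_X(1)$ is nontrivial in $\Pic_G(X)$ (either because its sign is $-1$ on $X^G$, or, if $X^G=\emptyset$, because it classifies the nontrivial cover $X\to X/G$). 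Since $Q^{(-)}$ has order~2 in $\Q(X)$, the element $\Clone\in\Q_2(X)$ has order exactly~4, so the extension does not split. Specializing to $X=\pt$ gives $|\Q_2(\pt)|=4$ with a nontrivial extension of $\Z/2$ by $\Z/2$, whence $\Q_2(\pt)\cong\Z/4$.

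The main obstacle is the nontriviality step: everything hinges on knowing that $Q^{(-)}$ (equivalently $\R_X(1)$) is nonzero in $\H_G^1(X,\Z/2)$ when $X$ is connected. That input is supplied by Example~\ref{Xcovers}, which covers both the $X^G\neq\emptyset$ case (via the sign at a fixed point) and the $X^G=\emptyset$ case (via the classifying class of the double cover $X\to X/G$); so the plan reduces to invoking that example together with the multiplicative identities already recorded in Examples~\ref{Q+/-} and~\ref{ex:Z/4}.
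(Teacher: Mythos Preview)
Your proposal is correct and follows essentially the same approach as the paper: identify $\ker\pi$ with $\Q(X)\cong\H_G^1(X,\Z/2)$ via Proposition~\ref{w1-bundles}, use $\Clone$ for surjectivity, and invoke Examples~\ref{ex:Z/4} and~\ref{Xcovers} to see that $\Clone\ast\Clone=Q^{(-)}\leftrightarrow\R_X(1)$ is nontrivial when $X$ is connected. Your treatment is slightly more explicit (you verify that $\pi$ is a homomorphism and handle surjectivity for disconnected $X/G$ by patching rather than implicitly reducing to components), but the substance is the same.
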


\begin{proof}
The sequence is exact at the first two spots by
Proposition \ref{w1-bundles}, because
$\pi(Q)=0$ if and only if $Q$ is a Real quadratic algebra.
By Example \ref{ex:Z/4}, $\pi(\Clone)=1$ (so $\pi$ is onto)
and 
$\Clone\ast\Clone$ is the Real quadratic algebra $Q^{(-)}$.
By Example \ref{Xcovers}, $Q^{(-)}$ corresponds 
to $\R_X(1)$, and is nontrivial when $X$ is connected.
\end{proof}


Let $M_{1,1}A$ be the checkerboard-graded algebra of 
Example \ref{checkered}, associated to a Real graded algebra 
$A$ on $X$.
The graded centralizer $\hatZ(A)$ of $S=M_{1,1}\,A_0$ in $M_{1,1}A$ is a 
Real graded quadratic algebra, by Theorem \ref{BW} applied to
each fiber.  As in {\it loc.\,cit.}, if $A_1$ is nowhere zero then
the graded centralizer of $A_0$ in $A$  is isomorphic to $\hatZ(A)$.

\goodbreak
\begin{theorem}\label{RBW}
$\hatZ(A)$ is a Real graded quadratic algebra, and
$\hatZ$ is a homomorphism:
$\hatZ(A\hat\oo B)\cong\hatZ(A)\ast\hatZ(B)$.
The inclusion $i$ of Example \ref{ex:RBr}
and the function $\hatZ$ fit into an exact sequence:
\[
0 \to \RBr(X) \map{i} \RGB(X) \map{\hatZ} \Q_2(X) \to 0.
\]
\end{theorem}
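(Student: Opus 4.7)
The plan is to follow Theorem~\ref{BW} one fiber at a time and transport the Real structure automatically via the antilinear maps $A_x \to A_{\sigma x}$. Since forming graded centralizers is natural, these maps restrict to antilinear maps on $\hatZ(A)$, making $\hatZ(A)$ a Real graded quadratic algebra whose decomposition $\hatZ(A) = \C_X \oplus L$ and multiplication $L \oo L \cong \C_X$ are Real. The multiplicativity $\hatZ(A\hat\oo B) \cong \hatZ(A) \ast \hatZ(B)$ is then a fiberwise consequence of Theorem~\ref{BW}, and $\hatZ$ descends to $\RGB(X)$ once we check that $\hatZ(\End(E)) \cong Q^{(+)}$ is trivial for any graded Real bundle $E$ --- again fiberwise.

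Exactness at $\RGB(X)$ reduces to showing both inclusions. The composition $\hatZ \circ i$ vanishes because a Real ungraded Azumaya algebra $A_0$ gives $\hatZ(A_0) = Q^{(+)}$ on each fiber by Theorem~\ref{BW}, which is the identity of $\Q_2(X)$. For surjectivity of $\hatZ$, I would use Clifford algebras to exhibit preimages of generators of $\Q_2(X)$. For any Real line bundle $L$ with form $\theta\colon L\oo L \cong \C_X$, the Clifford algebra $\Cl(L, \theta)$ of Example~\ref{ex:Clifford} is a rank-two Real graded Azumaya algebra, and one checks that $\hatZ(\Cl(L, \theta)) \cong \Cl(L, \theta)$ itself; this realizes every odd Real graded quadratic algebra (those with $\pi = 1$) having underlying bundle $L$. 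Multiplying by $\Clone$ and applying Example~\ref{ex:*} then realizes every even Real quadratic algebra. By Proposition~\ref{ext:Q_2}, these two cases cover all of $\Q_2(X)$.

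The main obstacle is the reverse inclusion $\ker(\hatZ) \subseteq \mathrm{image}(i)$: given $A$ with $\hatZ(A) \cong Q^{(+)}$, I must globalize the fiberwise Brauer-equivalence coming from Theorem~\ref{BW}. The chosen trivialization of $\hatZ(A)$ provides a Real global section $u \in \hatZ(A) \subseteq M_{1,1}A$ of degree~$0$ with $u^2 = 1$, so the Real idempotents $e = (1+u)/2$ and $f = (1-u)/2$ decompose $M_{1,1}A$ as a direct sum of two Real ungraded Azumaya subalgebras $A^\pm$, each preserved by the grading and concentrated in degree~$0$ up to Brauer equivalence. A Morita-style argument then identifies the class $[A]$ in $\RGB(X)$ with $i[A^+]$ in the image of $\RBr(X)$. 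The delicate bookkeeping is to check that this construction respects both the Real structure and the $\Z/2$-grading globally, and that the resulting class is independent of the trivialization --- this is the crux of the argument, and is where the topological setting differs most from the purely fiberwise situation handled by Theorem~\ref{BW}.
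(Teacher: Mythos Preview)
Your strategy---reduce each assertion to its fiberwise analogue from Theorem~\ref{BW} and carry the Real structure along by naturality---is exactly the paper's. Two differences are worth noting.

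For surjectivity of $\hatZ$ in the even case, the paper is more direct than you are: given an even Real quadratic algebra $Q=\C_X\oplus L$ (with $L$ in degree~$0$), it writes down the explicit Real graded Azumaya algebra $A=Q\oplus Qu$ with $u$ in degree~$1$, $u^2=-1$, and $\lambda u=-u\lambda$ for sections $\lambda$ of $L$, citing \cite[I(1.3.7)]{Knus}; then $\hatZ(A)=Q$ by inspection. Your route via $\Cl(L,\theta)\hat\oo\Clone$ and multiplicativity also works, but is one step removed.

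For the kernel inclusion, the paper is far terser than you: it simply says that if $\hatZ(A)\cong Q^{(+)}$ then each fiber has trivial $\hatZ$-invariant, so by the structure theory behind Theorem~\ref{BW} the class lies in the image of $\RBr$. Your idempotent/Morita elaboration is the natural way to unpack that appeal, but one point in your sketch is not quite right: the idempotents $e,f$ are central only in $(M_{1,1}A)_0$, not in $M_{1,1}A$ itself, so they do \emph{not} split $M_{1,1}A$ as a product of two ungraded Azumaya algebras. What actually happens is a Peirce decomposition: $(M_{1,1}A)_0=A^+\times A^-$, while $(M_{1,1}A)_1=e(M_{1,1}A)_1f\oplus f(M_{1,1}A)_1e$ is an invertible $(A^+,A^-)$-bimodule pair. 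One then checks $M_{1,1}A\cong\End_{A^+}\bigl(A^+\oplus f(M_{1,1}A)_1e[1]\bigr)$, which gives the graded Morita equivalence to the ungraded $A^+$. The Real structure and the $\Z/2$-grading are preserved at every step by naturality of centralizers and Peirce decompositions, so the bookkeeping you flag as ``delicate'' is more routine than your closing paragraph suggests; independence of the trivialization is also a non-issue, since you are only proving membership in $i(\RBr(X))$, not constructing a map.
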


\begin{proof}
The first sentence follows from Theorem \ref{BW} because
each fiber of $\hatZ(A)$ is a graded quadratic algebra,
compatible with the graded tensor product $\hat\oo$,
and the involution preserves centralizers.

This shows that the displayed sequence is exact at $\RGB(X)$,
suppose that $A$ is a Real graded algebra on $X$ with
$\hatZ(A)=Q^{(+)}$ 
in degree~0. By Theorem \ref{BW},
each fiber $A_x$ is a graded Azumaya algebra over $\C$ with
$\hatZ(A_x)=\C$. That is, $A$ is a Real Azumaya algebra.

To show that $\hatZ$ is onto, let $Q=Q_0\oplus Q_1$
be a Real graded quadratic algebra on $X$.  If $Q_1\ne0$,
then $Q$ is also a Real graded Azumaya algebra, and
$\hatZ(Q)=Q$.  If $Q_1=0$ then $Q=Q_0=\C_X\oplus L$ and
we consider $A=Q\oplus Qu$ with $u^2=-1$ and 
$\lambda u=-u\lambda$ for $\lambda\in L$. This is a Real
graded Azumaya algebra on $X$ with $\hatZ(A)=Q$;
see \cite[I(1.3.7)]{Knus}. 
\end{proof}

\begin{subremark}\label{RGB-RBr}
Suppose for simplicity that $X$ is connected.
The composition $\pi\circ\hatZ:\RGB(X)\to\Z/2$ is the {\it parity};
we say that $A$ is {\it even} or {\it odd}, according to
whether its parity is~0 or~1. 

There is a set-theoretic section $u:\RGB(X)\to\RBr(X)$
of the inclusion in Example \ref{ex:RBr}. If $A$ is even, forgetting the
grading yields an Azumaya algebra which we write as $uA$;
if $A$ is odd, $A_0$ is an Azumaya algebra and we set $u\cA=\cA_0$.
By Remark \ref{BW-Br},  $u$ is well defined.

If $X$ is a point, $u$ is the section $\Z/8\to\Z/2$ of 
$\RBr(\pt)\subset\RGB(\pt)$.
\end{subremark}

\begin{lemma}\label{WR/I2}
The composition $WR(X) \smap{\Cl} \RGB(X) \smap{\hatZ} \Q_2(X)$ is onto,
and $WR(X)\to\Q_2(X)\to\H_G^0(X,\Z/2)$ is the rank mod~2.
\end{lemma}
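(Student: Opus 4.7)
The plan is to handle both assertions by fiberwise computations, using the multiplicativity $\Cl(q_1 \perp q_2) \cong \Cl(q_1) \hat\oo \Cl(q_2)$ and $\hatZ(A\hat\oo B) \cong \hatZ(A) \ast \hatZ(B)$ together with explicit preimages for the two pieces of the extension of Proposition~\ref{ext:Q_2}.

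For the rank mod~2 statement, the map $WR(X) \to \H_G^0(X,\Z/2)$ is visibly reduction mod~2 of the rank, since hyperbolic forms $h(E)$ have even rank $2\dim(E)$. To identify it with $\pi \circ \hatZ \circ \Cl$, by multiplicativity it suffices to verify the single case $n=1$: by Example~\ref{ex:Z/4}, $\hatZ(\Cl(\C_X,1)) = \Clone$ with $\pi(\Clone) = 1$, and then by induction the rank-$n$ form has image with $\pi = n \bmod 2$.

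For surjectivity, I would hit both pieces of the extension of Proposition~\ref{ext:Q_2}. For each connected component $C$ of $X/G$, the rank-one Real symmetric form equal to $(\C,1)$ on the preimage of $C$ in $X$ and to the zero bundle elsewhere has image under $\pi \circ \hatZ \circ \Cl$ equal to the characteristic function $1_C \in \H_G^0(X,\Z/2)$; as $C$ varies these span. Next, given $\alpha \in \H_G^1(X,\Z/2) \cong \Pic_G(X)$, Proposition~\ref{w1-bundles} yields an $\R$-linear $G$-line bundle $L_0$ representing $\alpha$ and a Real line bundle $L = L_0 \oo \C$ with canonical form $\theta : L \oo L \map{\cong} \C_X$. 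Consider the rank-two form $(L \oplus \C_X, \theta \perp 1) \in GR(X)$. Its Clifford algebra is $\Cl(L,\theta) \hat\oo \Clone$, and since $\Cl(L,\theta) = \C_X \oplus L$ (with $L$ in degree~1) is itself a Real graded quadratic algebra, $\hatZ(\Cl(L,\theta)) = \Cl(L,\theta)$ in $\Q_2(X)$. Multiplying by $\Clone$ using Definition~\ref{def:*} gives
\[
(\C_X \oplus L) \ast (\C_X \oplus \C_X) \;=\; \C_X \oplus (L \oo \C_X),
\]
with $L$ now sitting in degree $1+1 = 0$; under the isomorphism $\Q(X) \cong \H_G^1(X,\Z/2)$ of Proposition~\ref{w1-bundles}, this represents $\alpha$.

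Combining these two surjectivity statements with the exact sequence of Proposition~\ref{ext:Q_2} yields that $\hatZ \circ \Cl$ surjects onto $\Q_2(X)$. The only delicate input is the identification $\hatZ(\Cl(L,\theta)) = \Cl(L,\theta)$ for the rank-one Clifford algebra, which is the general principle (invoked in the surjectivity step of Theorem~\ref{RBW}) that a graded quadratic algebra with $Q_1 \ne 0$ is already a Real graded Azumaya algebra equal to its own $\hatZ$-class in $\Q_2(X)$.
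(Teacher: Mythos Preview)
Your argument is correct and follows essentially the same strategy as the paper: hit $\H_G^0$ via $\Clone$, then hit $\Q(X)$ via the Clifford algebra of a rank-two form built from a Real line bundle $(L,\theta)$, using multiplicativity of $\hatZ$ together with the fact (from the proof of Theorem~\ref{RBW}) that a graded quadratic algebra with nonzero degree-1 part equals its own $\hatZ$. One small slip: by the sign convention of Definition~\ref{def:*} (see Example~\ref{ex:*}), your product $\Cl(L,\theta)\ast\Clone$ is the degree-0 quadratic algebra $\C_X\oplus L$ with form $-\theta$, which represents $\alpha+[-1]$ rather than $\alpha$; the paper uses $q=1\oplus(-\theta)$ precisely so as to land exactly on $\alpha$, though of course this does not affect surjectivity.
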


\begin{proof}
Since we saw in Example \ref{ex:Z/4} that the Clifford algebra $\Clone$
of $(\C_X,1)$ has $\pi(\Clone)=1$, it suffices to show that
$I(X)$ maps onto $\Q(X)$.

Suppose that $C=C_0\oplus C_1$ is the Clifford algebra of a 
Real quadratic space of even rank on $X$. To show that 
$\hatZ(C)$ is in $\Q(X)$, we may proceed fiberwise.
By Knus \cite[IV(2.2.3)]{Knus}, each fiber $C_x$ is a 
graded Azumaya algebra of even type.
In particular, $\hatZ(C_x)$ is a quadratic algebra over $\C$;
it follows that $\hatZ(C)$ is a Real quadratic algebra on $X$.

To show that the map is onto, let $\C_X\oplus L$ be a 
Real quadratic algebra with a symmetric form $\theta$ on $L$,
and consider the Real vector bundle $E=\C_X\oplus L$ with 
symmetric form $q=(1\oplus-\theta)$ 
The Clifford algebra $\Cl(E,q)$ has
$\hatZ(\Cl(E,q))=\hatZ(\Cl(\C_X,1))\ast\hatZ(\Cl(L,-\theta))$.

The Clifford algebra $Q=\Cl(L,-\theta)$ also has $\pi(Q)=1$,
and $Q\ast\Clone$ is the Real quadratic algebra $\C_X\oplus L$,
with $L\oo L\to\C_X$ given by $\theta$ (see Definition \ref{def:*}). 
Now $\hatZ\circ\Cl$ is a group homomorphism,
by Lemma \ref{C-homom} and Theorem \ref{RBW}, so
$I(X)$ maps onto $\Q(X)$.
\end{proof}

\begin{lemma}\label{Cox-H1}
If $V$ is a variety over $\R$, then $\Q_2(V) \smap{\cong} \Q_2(V_\top)$,
and the kernel and cokernel of $\BW(V)\to\RGB(V_\top)$ 
are the same as the kernel and cokernel of $\Br(V)\to\RBr(V_\top)$.
\end{lemma}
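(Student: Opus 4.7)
My strategy is to set up a map of Brauer--Wall short exact sequences and finish with a snake-lemma argument. The first step is to construct a commutative diagram with exact rows comparing the algebraic sequence of Theorem \ref{BW} to the topological sequence of Theorem \ref{RBW}. The middle vertical arrow is $\BW(V)\to\RGB(V_\top)$ from Example \ref{Br-RBr}, the left vertical is its restriction $\Br(V)\to\RBr(V_\top)$, and the right vertical is defined by sending a graded quadratic algebra $\cO_V \oplus L$ on $V$ to the Real graded quadratic algebra $\C_{V_\top}\oplus L_\top$ on $V_\top$, where $L_\top$ is the analytification of $L$ regarded as a Real line bundle. Commutativity of the right square reduces to the fact that formation of the graded centralizer $\hatZ$ commutes with passage to topological fibers.

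The second step is to show that the right-hand vertical $\Q_2(V)\to\Q_2(V_\top)$ is itself an isomorphism. Both source and target fit into extensions, the first from \eqref{eq:Q_2} and the second from Proposition \ref{ext:Q_2}:
\[
0 \to H_\et^1(V,\Z/2) \to \Q_2(V) \to H_\et^0(V,\Z/2) \to 0,
\]
\[
0 \to \H_G^1(V_\top,\Z/2) \to \Q_2(V_\top) \to \H_G^0(V_\top,\Z/2) \to 0,
\]
and analytification is compatible with the parity projections on the right. Cox's theorem identifies $H_\et^n(V,\Z/2)\cong\H_G^n(V_\top,\Z/2)$ for $n=0,1$, and those identifications match the respective inclusions into $\Q_2$: in degree~$0$ both maps literally record the rank, while in degree~$1$ both classify the double cover associated to a graded quadratic algebra. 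A five lemma then yields $\Q_2(V)\cong\Q_2(V_\top)$.

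The final step is to apply the snake lemma to the comparison diagram. Since the right-hand vertical is an isomorphism, both its kernel and its cokernel vanish, so the six-term exact sequence of kernels and cokernels degenerates into the asserted isomorphisms $\ker(\Br(V)\to\RBr(V_\top)) \cong \ker(\BW(V)\to\RGB(V_\top))$ and $\coker(\Br(V)\to\RBr(V_\top)) \cong \coker(\BW(V)\to\RGB(V_\top))$.

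The delicate point I expect is verifying that the $H^1$ leg of Cox's isomorphism is compatible with the $H^1$ classification of quadratic algebras afforded by Proposition \ref{w1-bundles}. This is a naturality statement rather than a computation—both sides classify the same $\Z/2$-torsor $\Spec_V(\cO_V\oplus L)$, one \'etale-cohomologically and the other via its underlying $G$-equivariant topology—but it is worth spelling out carefully, both because the ungraded piece of the $\Q_2$ extension is twisted by a sign (the product rule in Definition \ref{def:*}) and because ambiguities in the choice of Bockstein conventions can propagate to the wrong map.
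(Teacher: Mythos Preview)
Your proposal is correct and follows essentially the same approach as the paper: set up the commutative diagram from Theorems \ref{BW} and \ref{RBW}, use Cox's theorem together with the extensions \eqref{eq:Q_2} and Proposition \ref{ext:Q_2} to conclude $\Q_2(V)\cong\Q_2(V_\top)$, and finish with the Snake Lemma. Your version is more explicit about the five-lemma step for $\Q_2$ and the naturality check underlying Cox's identification in degree~1, points the paper leaves implicit.
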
 

\begin{proof}
Cox's Theorem \cite{Cox} states that
\[ 
H^*_\et(V,\Z/2) \cong \H^*_G(V_\top,\Z/2). 
\]
By \eqref{eq:Q_2} and Proposition \ref{ext:Q_2}, 
$\Q_2(V)\map{\cong} \Q_2(V_\top)$. 
By Theorems \ref{BW} and \ref{RBW}, 
there is a commutative diagram
\begin{align*}\xymatrix@R=1.5em{
0\ar[r]& \Br(V)\ar[d]\ar[r]&\BW(V)\ar[d]\ar[r]&\Q_2(V)\ar[d]^{\cong}\ar[r]&0
\\
0\ar[r]& \RBr(V_\top) \ar[r] &\RGB(V_\top) \ar[r] &\Q_2(V_\top) \ar[r]&0.
}\end{align*}
The result follows by the Snake Lemma.
\end{proof}

\begin{example}
If $X$ is a circle with involution, there are three cases:
\begin{itemize}\setlength{\itemindent}{-7mm}
\item $S^{2,0}$, the circle with the antipode involution.
Then $\RBr(X)=0$ by Lemma \ref{RBr:dim=1}, and
$\RGB(X)\cong\Q_2(X)\cong\Z/4$ by 
Theorem \ref{RBW} and Proposition \ref{ext:Q_2}.

\item $S^{0,2}$, the circle with trivial involution. 
By Example \ref{fixed:RGB}, 
\[  \RGB(S^{0,2})\cong \Z/8\oplus\Z/2.  \] 

\item $S^{1,1}$, the unit circle $T$ in $\C$. 
Since $T/G$ is contractible, Lemma \ref{H_G^1} implies that 
$\Q(X)\cong(\Z/2)^2$ on the 
elements of Examples \ref{Xcovers} and \ref{theta=t};
the projection $\RGB(X)\to\RGB(X^G)\cong\Z/8\oplus\Z/8$
shows that $\RGB(S^{1,1})\cong \Z/8\oplus\Z/4$. 
\end{itemize}
An interesting element of $\RBr(S^{1,1})$ is given by the
Real algebra bundle $A$ whose underlying algebra is $X\times M_2(\C)$,
but whose involution over a point $z$ with $\mathrm{Im}(z)\ge0$
is the composition of complex conjugation with conjugation by the matrix
$\left(\atop{~\cos(t/2)}{-\sin(t/2)}~ \atop{\sin(t/2)}{\cos(t/2)}\right)$
where $t$ is the angle of $z$. The fixed algebra by the involution over 
the fixed points is either $M_2(\R)$ or the skew field of quaternions.
\end{example}


\medskip\goodbreak
\section{Classification of $\RBr(X)$}\label{sec:RBr}

As pointed out in Example \ref{ex:Br}, the Brauer group 
of a quasi-projective variety $V$ is isomorphic to
${_\tors}H_\et^2(V,\Gm)$, the torsion subgroup 
of $H_\et^2(V,\Gm)$.  In this section, we prove the topological analogue,
replacing \'etale cohomology by Borel's equivariant cohomology.

\begin{theorem}\label{RBr}
$\RBr(X)$ is isomorphic to ${_\tors}\H_G^3(X,\Z(1))$,
the torsion subgroup of the equivariant cohomology group $\H_G^3(X,\Z(1))$.
\end{theorem}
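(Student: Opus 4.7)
The plan is to classify Real Azumaya algebras via equivariant projective bundles and obstruction theory, paralleling Serre's prototype that the topological Brauer group of $X$ is ${_\tors}H^3(X,\Z)$.

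A Real Azumaya algebra $A$ of rank $n^2$ is locally isomorphic to $\End(E)$ for a Real vector bundle $E$ of rank $n$, so it determines a Real principal $PGL_n(\C)$-bundle on $X$, with $G$ acting on $PGL_n(\C)$ by entrywise complex conjugation. Passing to the limit over $n$ and quotienting by the equivalence $A\sim A\oo\End(E)$, $\RBr(X)$ becomes the set of Real $PGL_\infty$-bundles modulo those lifting to Real $GL_\infty$-bundles.

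The obstruction arises from the Real central extension of sheaves of groups on $X_G$,
\[
1 \to \cO^*_\C \to \underline{GL} \to \underline{PGL} \to 1,
\]
where $\cO^*_\C$ is the sheaf of continuous $\C^*$-valued functions equipped with the involution combining the $G$-action on $X$ with complex conjugation on $\C^*$, and $\underline{GL}$, $\underline{PGL}$ are the corresponding Real sheaves of groups. The nonabelian boundary $\delta\colon H^1_G(X,\underline{PGL}) \to H^2_G(X,\cO^*_\C)$ has kernel equal to the image of $H^1_G(X,\underline{GL})$, hence descends to an injection $\RBr(X)\hookrightarrow H^2_G(X,\cO^*_\C)$. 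The target is then identified with $H^3_G(X,\Z(1))$ via the Real exponential sequence $0\to\Z(1)\to\cO_\C\to\cO^*_\C\to 0$: the kernel $2\pi i\Z$ carries the sign representation of $G$ under conjugation, and $\cO_\C\cong\cO_\R\oplus\cO_\R(1)$ is a sum of soft sheaves on the paracompact space $X_G$, so the connecting map yields $H^2_G(X,\cO^*_\C)\smap{\cong}H^3_G(X,\Z(1))$.

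Finally, I would show the image of $\RBr(X)$ is exactly ${_\tors}H^3_G(X,\Z(1))$. Containment in the torsion is easy: a rank $n^2$ Azumaya algebra factors $\delta$ through $H^2_G(X,\mu_n(1))$ via the subextension $1\to\mu_n\to SL_n\to PGL_n\to 1$, so its image is $n$-torsion. For surjectivity, given $\alpha\in H^3_G(X,\Z(1))$ of finite order dividing $n$, I would lift $\alpha$ along the Bockstein $H^2_G(X,\mu_n(1))\to H^3_G(X,\Z(1))$ arising from $0\to\Z(1)\to\Z(1)\to\mu_n(1)\to 0$, and then realize the lift by a Real $PGL_n$-bundle. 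The hard part is this last realization step: the equivariant analog of Serre's construction, requiring that every class in $H^2_G(X,\mu_n(1))$ be represented by a genuine Real $PGL_n$-bundle. Since $X$ is finite-dimensional with finite $\pi_0$, the Borel construction $X_G$ admits finite-dimensional approximations in each fixed degree, so torsion obstructions of interest are realized by finite-rank Real bundles; the delicate point is executing the classifying-space argument equivariantly with respect to the conjugation $G$-action on $PGL_n$.
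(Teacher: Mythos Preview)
Your injection argument is essentially the paper's Lemma~\ref{rho injects}: classify Real Azumaya algebras by $\H^1_G(X;PU_n)$, push along the boundary of the central extension, identify the target with $\H^3_G(X,\Z(1))$ via the exponential sequence, and observe that the image lands in torsion because it factors through $\H^2_G(X,\mu_n)$. So far so good.

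The genuine gap is surjectivity. You reduce to realizing an arbitrary class in $\H^2_G(X,\mu_n(1))$ by a Real $PGL_n$-bundle, then concede this is ``the hard part'' and offer only a gesture at finite-dimensional approximations of $X_G$. That is not enough: $\H^1_G(X,PGL_n)$ is only a pointed set, the boundary map to $\H^2_G(X,\mu_n)$ need not be surjective for fixed $n$, and the equivariant classifying-space argument you allude to is not carried out. Even non-equivariantly this realization step is nontrivial (it is the content of Serre's theorem), and the Real/equivariant version with the conjugation action on $PGL_n$ requires a real argument, not a remark.

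The paper bypasses this entirely. Rather than realizing torsion classes directly, it proves an equivariant analogue of Brown's theorem (Theorem~\ref{thm:Brown} in the Appendix): a natural transformation $\rho:E\to F$ between half-exact functors on pointed $G$-CW complexes is a bijection for all $X$ once it is a bijection on the test spaces $S^n\wedge(G/H)_+$ and on all $X$ of dimension $\le1$. The paper then verifies $\rho_X$ on these test cases by hand (Lemmas~\ref{free G-cell} and \ref{RBr:dim=1}, plus the trivial-action case from Example~\ref{fixed:RGB} and Remark~\ref{H3=H0+H2}). This buys surjectivity without ever constructing a Real $PGL_n$-bundle from a cohomology class; the price is the representability machinery in the Appendix. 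Your proposed route could in principle work, but as written it stops exactly where the difficulty begins.
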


The proof of Theorem \ref{RBr} is postponed until after the  technical 
Lemmas \ref{rho injects}--\ref{RBr:dim=1} below.
Combining Theorems \ref{RBW} and  \ref{RBr} with Proposition \ref{ext:Q_2},
we deduce:

\begin{corollary}\label{RGB-filt}
There is a filtration on $\RGB(X)$ with associated graded groups:
\[
\H_G^0(X,\Z/2),\quad \H_G^1(X,\Z/2) \quad\mathrm{and}\quad
 {_\tors}\H_G^3(X,\Z(1)).
\]
\end{corollary}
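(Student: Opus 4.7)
The corollary is essentially bookkeeping: we string together the three earlier results into a single filtration. Here is the plan.

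First, I would invoke Theorem \ref{RBW} to obtain the short exact sequence
\[
0 \to \RBr(X) \map{i} \RGB(X) \map{\hatZ} \Q_2(X) \to 0.
\]
This gives a two-step filtration $0 \subseteq \RBr(X) \subseteq \RGB(X)$ with quotient $\Q_2(X)$. Next, I would refine the top layer using Proposition \ref{ext:Q_2}, which supplies the extension
\[
0 \to \H_G^1(X,\Z/2) \to \Q_2(X) \map{\pi} \H_G^0(X,\Z/2) \to 0.
\]
Pulling the subgroup $\H_G^1(X,\Z/2) \subseteq \Q_2(X)$ back along $\hatZ$ produces an intermediate subgroup $F \subseteq \RGB(X)$, fitting into
\[
0 \to \RBr(X) \to F \to \H_G^1(X,\Z/2) \to 0,
\]
with quotient $\RGB(X)/F \cong \Q_2(X)/\H_G^1(X,\Z/2) \cong \H_G^0(X,\Z/2)$.

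This yields the filtration $0 \subseteq \RBr(X) \subseteq F \subseteq \RGB(X)$, whose three associated graded pieces are $\RBr(X)$, $\H_G^1(X,\Z/2)$, and $\H_G^0(X,\Z/2)$. Finally I would apply Theorem \ref{RBr} to replace $\RBr(X)$ by ${_\tors}\H_G^3(X,\Z(1))$, giving the three associated graded groups as stated. Listing them in the order of the corollary just corresponds to reading the filtration from the top quotient down to the bottom subgroup.

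There is no real obstacle here, since all three ingredients are already established; the only care needed is in matching the ordering of the associated graded groups to the filtration direction, and in noting that the pullback $F = \hatZ^{-1}(\H_G^1(X,\Z/2))$ is well defined precisely because $\hatZ$ is surjective by Theorem \ref{RBW}.
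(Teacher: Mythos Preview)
Your proposal is correct and matches the paper's own argument exactly: the paper simply states that the corollary follows by combining Theorems~\ref{RBW} and~\ref{RBr} with Proposition~\ref{ext:Q_2}, which is precisely the bookkeeping you describe. (One small quibble: the preimage $F=\hatZ^{-1}(\H_G^1(X,\Z/2))$ is well defined regardless of surjectivity; surjectivity of $\hatZ$ is what you need for the isomorphism $\RGB(X)/F\cong\H_G^0(X,\Z/2)$.)
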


Using Atiyah's notation \cite{Atiyah}, 
$S^{5,0}$ denotes the 4-sphere with antipodal involution.
Since the groups in \ref{RGB-filt} only depend on the 4-skeleton
of $X$, we have:

\begin{corollary}\label{4-skel}
$\RGB(X)\cong \RGB(X^{(4)})$, where $X^{(4)}$ is the 4-skeleton of $X$.
In addition, $\RGB(X)\cong \RGB(X\times S^{5,0})$.
\end{corollary}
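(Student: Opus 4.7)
My plan is to exploit the filtration in Corollary~\ref{RGB-filt}, whose associated graded pieces $\H_G^0(X,\Z/2)$, $\H_G^1(X,\Z/2)$ and ${_\tors}\H_G^3(X,\Z(1))$ all involve equivariant cohomology only in degrees $\leq 3$. Since the cellular cochain complex in degree $n$ depends only on the $(n+1)$-skeleton, the inclusion $j:X^{(4)}\hookrightarrow X$ induces an isomorphism $\H_G^n(X,\Lambda)\smap{\cong}\H_G^n(X^{(4)},\Lambda)$ for $n\le 3$ (here one uses a $G$-CW structure, refining the skeleton if needed so it is $G$-invariant). Functorial constructions make $j^*:\RGB(X)\to\RGB(X^{(4)})$ a map of filtered abelian groups, and the identifications above show it induces an isomorphism on each associated graded piece. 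An iterated application of the five-lemma to the short exact sequences of the filtration then yields $\RGB(X)\cong\RGB(X^{(4)})$.

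For the second statement, I would use the projection $p:X\times S^{5,0}\to X$, which is $G$-equivariant and therefore induces a fibration of Borel constructions
\[
S^{5,0}\longrightarrow (X\times S^{5,0})_G\longrightarrow X_G
\]
with fiber $S^{5,0}$. The space $S^{5,0}$ is $3$-connected (it is homeomorphic to $S^4$), so in the Leray--Serre spectral sequence with coefficients in any $G$-module $\Lambda$ the rows $E_2^{p,q}$ vanish for $0<q<4$. It follows that $p^*:\H_G^n(X,\Lambda)\smap{\cong}\H_G^n(X\times S^{5,0},\Lambda)$ for $n\le 3$. Applying this to $\Lambda=\Z/2$ and $\Lambda=\Z(1)$, together with the filtration argument above, gives $p^*:\RGB(X)\smap{\cong}\RGB(X\times S^{5,0})$.

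The step I expect to be the main technical obstacle is verifying that $j^*$ and $p^*$ genuinely respect the filtration of Corollary~\ref{RGB-filt} in a way that lets the five-lemma close the argument; concretely, I need that the identification of the top graded piece with ${_\tors}\H_G^3(-,\Z(1))$ from Theorem~\ref{RBr} is natural in the $G$-space variable, and similarly for the identifications with $\H_G^0(-,\Z/2)$ and $\H_G^1(-,\Z/2)$ coming from Proposition~\ref{ext:Q_2} and Theorem~\ref{RBW}. These naturality statements are essentially built into the constructions (centralizers, parity, classifying cocycles), but they deserve an explicit sentence. Once naturality is in hand, the two isomorphisms follow from exactly the same five-lemma pattern, and the corollary is immediate.
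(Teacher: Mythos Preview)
Your proposal is correct and follows the same approach as the paper, which dispatches the corollary with the single remark that ``the groups in \ref{RGB-filt} only depend on the 4-skeleton of $X$.'' Your use of the five-lemma on the natural filtration, together with the Leray--Serre argument for the $S^{5,0}$ statement, is exactly the detailed unpacking of that sentence; the naturality concerns you raise are handled by the paper (e.g., Lemma~\ref{rho injects} states that $\rho_X$ is natural in $X$), so no gap remains.
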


In preparation for the proof of Theorem \ref{RBr}, 
we do some simple calculations. Since $\pi_0(X)$ is assumed finite,
we may assume that $X/G$ is connected.

\begin{lemma}\label{rho injects}
There is a natural injective homomorphism
\[
\rho_X:\RBr(X)\to{_\tors}\H^2_G(X;U_1)\cong{_\tors}\H^3_G(X;\Z(1)).
\]
\end{lemma}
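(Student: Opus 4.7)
The plan is to produce $\rho_X$ by a Real version of the standard \v{C}ech-cohomology classification of Azumaya algebras, and then to identify the target via an exponential sequence.

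First, every Real Azumaya algebra $A$ on $X$ has fibers $A_x \cong M_n(\C)$ on each connected component. Choosing a $G$-invariant Hermitian metric on $A$ (by averaging a local one under the $G$-action), local triviality provides an equivariant cover $\{U_i\}$ over which $A$ is $G$-isomorphic to $M_n(\C_{U_i})$ with its standard Real structure. Since $\operatorname{Aut}(M_n(\C)) = PU_n$, the gluing data for $A$ amounts to an equivariant \v{C}ech $1$-cocycle valued in the $G$-sheaf $PU_n$ of continuous $PU_n$-valued functions with $G$ acting by entrywise complex conjugation. This sets up a bijection between isomorphism classes of rank-$n^2$ Real Azumaya algebras on $X$ and the equivariant \v{C}ech set $\check{\H}^1_G(X, PU_n)$.

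Next, from the central extension of $G$-sheaves
\[ 1 \to U_1 \to U_n \to PU_n \to 1 \]
(with $G$ acting by complex conjugation throughout) we obtain connecting maps $\delta_n : \check{\H}^1_G(X, PU_n) \to \H^2_G(X, U_1)$. The inclusions $M \mapsto \operatorname{diag}(M, I_m)$ show these assemble to a single function, which one verifies is additive under $\oo$ and vanishes on any $\End(E)$; this defines $\rho_X : \RBr(X) \to \H^2_G(X, U_1)$. For injectivity: $A$ is trivial in $\RBr(X)$ iff $A \cong \End(E)$ for some Real vector bundle $E$, iff the $PU_n$-cocycle lifts stably to a $U_m$-cocycle, which by exactness is precisely the kernel of $\rho_X$. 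For the image: the class of a rank-$n^2$ algebra factors through $\H^2_G(X, \mu_n)$ via the determinant $U_n \to U_1$, hence is $n$-torsion in $\H^2_G(X, U_1)$. The identification ${_\tors}\H^2_G(X, U_1) \cong {_\tors}\H^3_G(X, \Z(1))$ then follows from the exponential exact sequence of $G$-sheaves
\[ 0 \to \Z(1) \to \cO_\R(1) \xrightarrow{\exp(2\pi i\,\cdot)} U_1 \to 0, \]
where $\cO_\R(1)$ is the sheaf of continuous $\R$-valued functions with $G$ acting by negation. As in the proof of Proposition \ref{w1-bundles}, the soft sheaf $\cO_\R(1)$ is acyclic for $\H^*_G$, so the long exact sequence collapses to the desired isomorphism, which restricts on torsion.

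The main obstacle will be carrying out the equivariant \v{C}ech classification rigorously: one needs an equivariant good cover of $X$, a careful verification that the antilinear fiberwise involution on $A$ translates to the prescribed $G$-action on $PU_n$, and a match between the equivalence relation $A \sim A \oo \End(E)$ (restricted to ungraded algebras) and stable liftability of cocycles. The latter requires a Real analogue of the standard Morita fact that if $A \oo \End(F) \cong \End(E)$ for Real bundles $E, F$, then $A$ itself is of the form $\End(F')$ for some Real $F'$; this should follow from the existence of a $G$-invariant Morita bimodule between $A$ and $\C_X$.
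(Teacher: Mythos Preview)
Your overall strategy matches the paper's: classify rank-$n^2$ Real Azumaya algebras by $\H^1_G(X,PU_n)$, use the boundary of the central extension $1\to U_1\to U_n\to PU_n\to 1$, check the image is torsion via $PU_n\cong SU_n/\mu_n$, and identify the target through an exponential sequence. So the architecture is right.

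There is, however, a genuine error in your stabilization step. The block-diagonal inclusion $M\mapsto\operatorname{diag}(M,I_m)$ is a map $U_n\to U_{n+m}$, but it does \emph{not} descend to a map $PU_n\to PU_{n+m}$: the center $\lambda I_n$ of $U_n$ is sent to $\operatorname{diag}(\lambda I_n,I_m)$, which is not central in $U_{n+m}$. So your boundary maps $\delta_n$ cannot be assembled along these inclusions. The correct stabilization is multiplicative: tensoring with $\End$ of a trivial rank-$m$ bundle induces $PU_n\to PU_{nm}$ via $M\mapsto M\otimes I_m$, and this is exactly what corresponds to the Brauer relation $A\sim A\otimes\End(E)$. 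The paper takes the direct limit over this multiplicatively indexed system, writing $U_\otimes$, $PU_\otimes$, $\mu_\infty$, and obtains an exact sequence
\[
\H^1_G(X,U_\otimes)\to\H^1_G(X,PU_\otimes)\xrightarrow{\partial}{_\tors}\H^2_G(X,U_1).
\]
This also dissolves the ``main obstacle'' you flag at the end: in the limit, $A$ being Brauer-trivial means precisely that its class in $\H^1_G(X,PU_\otimes)$ lifts to $\H^1_G(X,U_\otimes)$, which is the kernel of $\partial$ by exactness; no separate Morita argument is needed. Your torsion argument and your exponential-sequence identification of the target are fine and agree with the paper (the paper packages the latter as a separate lemma using $\mu_\infty\to U_1\to U_1\otimes\bQ$, but your version with $\cO_\R(1)$ soft works equally well).
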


\begin{proof}
As observed by Bruno Kahn \cite[p.\,698]{Kahn},
Real vector bundles of rank $n$ on a Real space $X$ are 
classified by the equivariant cohomology set $\H^1_G(X;U_n)$ where 
$G$ acts on $U_n$ by complex conjugation.  By the Skolem-Noether theorem,
the group of automorphisms of $M_n(\C)$ is 
$PU_n = U_n/U_1 \cong SU_n/\mu_n$, so
Real Azumaya algebras of rank $n$ are classified by $\H^1_G(X;PU_n)$.
Because $U_1$ is in the center of $U_n$, 
we get exact sequences of pointed sets
\begin{equation*}\xymatrix@R=1.5em{
\H^1_G(X;SU_n) \ar[d] \ar[r] &\H^1_G(X;PU_n)
                        \ar[d]^{=}  \ar[r]^{\partial} & \H^2_G(X;\mu_n) \ar[d]\\
\H^1_G(X;U_n) \ar[r] & \H^1_G(X;PU_n) \ar[r]^{\partial}& \H^2_G(X;U_1). 
}\end{equation*}
As in Grothendieck \cite[1.4]{Dix},
it follows that the image of $\H^1_G(X;PU_n)$ in $\H^2_G(X;U_1)$ 
is $n$-torsion.
Tensoring with $\End(W)$ for a rank $r$ bundle has the effect of 
replacing $n$ by $rn$; we write $U_\oo$, $PU_\oo$ and $\mu_\infty$
for the direct limit of the groups $U_n$, $PU_n$ and $\mu_n$
as $n$ varies multiplicatively. Thus there is an exact sequence
\[
\H^1_G(X;U_\oo) \to \H^1_G(X;PU_\oo) \map{\partial} {_\tors}\H_G^2(X,U_1).
\]

An easy calculation, 
similar to \cite[1.4]{Dix} or \cite[Thm.\,8]{DonovanKaroubi},
shows that $\partial(A)\cup\partial(A')=\partial(A\oo A')$, 
and we get an injective homomorphism
$\rho_X:\RBr(X)\to{_\tors}\H^2_G(X;U_1)$, natural in $X$. 
Finally, the target is ${_\tors}\H_G^3(X,\Z(1))$ by 
Lemma \ref{Q/Z(1)}.
\end{proof}

\begin{lemma}\label{Q/Z(1)}
For all $i>0$,  
$\H_G^i(X;U_1)$ and $\H^{i+1}_G(X;\Z(1))$ are isomorphic, and 
their torsion subgroup is the image 
of $\H^i_G(X;\mu_\infty)$ in $\H_G^i(X;U_1)$.
\end{lemma}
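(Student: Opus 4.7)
The plan is to play off two short exact sequences of $G$-equivariant sheaves and connect them by a morphism.

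First, to get the isomorphism $\H^i_G(X;U_1)\cong\H^{i+1}_G(X;\Z(1))$, I will apply the $G$-equivariant long exact cohomology sequence to the continuous exponential sequence
\[
0 \to \Z(1) \to \cO_\R(1) \xrightarrow{\exp(2\pi i\,\cdot)} \cO_{U_1} \to 0,
\]
where $\cO_\R(1)$ denotes the sheaf of continuous $\R$-valued functions with the sign $G$-action, and $\cO_{U_1}$ denotes the sheaf of continuous $U_1$-valued functions with complex conjugation. Since $\cO_\R(1)$ is a soft sheaf (an equivariant partition of unity exists because $G$ is finite and $X$ is paracompact), it is acyclic for Borel cohomology --- exactly as in the analogous step of the proof of Proposition \ref{w1-bundles}. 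The connecting map therefore supplies the desired isomorphism in degrees $i>0$.

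Second, to identify the torsion subgroup, I will use the coefficient sequence
\[
0 \to \Z(1) \to \bQ(1) \to \mu_\infty \to 0
\]
coming from $\bQ(1)/\Z(1)\cong\mu_\infty$ (recorded in the Notation paragraph). Its long exact sequence gives
\[
\H^i_G(X;\bQ(1)) \to \H^i_G(X;\mu_\infty) \xrightarrow{\delta} \H^{i+1}_G(X;\Z(1)) \to \H^{i+1}_G(X;\bQ(1)).
\]
Because $\bQ(1)$ is a sheaf of $\bQ$-vector spaces, $\H^*_G(X;\bQ(1))$ is a $\bQ$-vector space (equivalently, it is the rationalization of $\H^*_G(X;\Z(1))$, finitely generated by the assumption that $X$ is homotopy equivalent to a finite CW complex with involution). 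Hence $\H^{i+1}_G(X;\bQ(1))$ is torsion-free, and the image of $\delta$ is exactly the torsion subgroup of $\H^{i+1}_G(X;\Z(1))$.

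Finally, to reconcile the two descriptions, I will use the obvious morphism of short exact sequences
\[
\begin{CD}
0 @>>> \Z(1) @>>> \bQ(1) @>>> \mu_\infty @>>> 0 \\
@. @| @VVV @VVV @. \\
0 @>>> \Z(1) @>>> \cO_\R(1) @>>> \cO_{U_1} @>>> 0
\end{CD}
\]
in which the rightmost vertical map is the inclusion $\mu_\infty\hookrightarrow U_1\subset\cO_{U_1}$. Naturality of the connecting homomorphism makes the square identifying $\H^i_G(X;\mu_\infty)\to\H^i_G(X;U_1)$ (followed by the isomorphism of the first step) with $\delta$ commute; thus the image of $\H^i_G(X;\mu_\infty)$ in $\H^i_G(X;U_1)$ maps onto the torsion subgroup of $\H^{i+1}_G(X;\Z(1))$, giving the claim. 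The main point to be careful about is the softness/acyclicity of $\cO_\R(1)$ in the Borel (equivariant) setting, and the identification of $\H^*_G(X;\bQ(1))$ with the rationalization of $\H^*_G(X;\Z(1))$; both are routine once one works on $X\times_G EG$ (using a finite skeleton of $EG$, since the cohomology in any fixed degree stabilizes).
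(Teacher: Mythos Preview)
Your proof is correct and follows essentially the same route as the paper: the exponential sequence with soft $\cO_\R$ gives the isomorphism, the sequence $\Z(1)\to\bQ(1)\to\mu_\infty$ identifies the torsion, and compatibility of the two connecting maps ties them together. The paper packages these ingredients into a single $3\times3$ grid of distinguished triangles (with an extra row $\bQ(1)\to\cO_\R\to U_1\otimes\bQ$), whereas you use a direct morphism of short exact sequences; the content is identical.
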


\begin{proof}
We have a diagram of distinguished triangles in the derived category
of equivariant sheaves on $X$:
\begin{equation*}\xymatrix@R=1.5em{
\mu_\infty[-1] \ar[r] \ar[d]& 0\ar[r]\ar[d] & 
\mu_\infty \ar[r]^{\cong} \ar[d] & \mu_\infty \ar[d] \\
\Z(1) \ar[d] \ar[r] & \cO_\R \ar[r]^{\exp} \ar[d] & U_1 \ar[r] \ar[d] 
  & \Z(1)[1] \ar[d] \\
\bQ(1)\ar[r] & \cO_\R \ar[r] & U_1\oo\bQ \ar[r]^{-1} & \bQ(1)[1].
}\end{equation*}
Here `$\cO_\R$' denotes the soft sheaf of continuous sections of $\R_X$,
so $\H_G^i(X,U_1)\cong\H_G^{i+1}(X,\Z(1)$ for $i>0$.
The cohomology of the terms in the bottom row are uniquely divisible.
The result now follows from the cohomology sequences of the columns, 
such as
\[ \H_G^i(X,\bQ(1)) \smap{} \H^i_G(X;\mu_\infty) \to
   \H^{i+1}_G(X;\Z(1))  \to\H_G^{i+1}(X,\bQ(1)). \qedhere \]
\end{proof}


\begin{subremark}\label{H3=H0+H2}
If $G$ acts trivially on $X$, 
$\H_G^n(X,\Z(1))\cong H^n(X\!\times\!BG,\Z(1))$.
This equals the group hypercohomology $\H^n(G,C^*(X)\oo\Z(1))$.
Using the hypercohomology spectral sequence 
${^I}\!E_2^{pq}=H^pH^q(G,C^*(X)(1))$
of \cite[6.1.15]{WH}, and the fact that 
$\H_G^3(\pt,\Z(1))\cong\Z/2$ 
is a summand of $\H_G^n(X,\Z(1))$, we see that
$\H_G^3(X,\Z(1))\cong H^0(X,\Z/2)\oplus H^2(X,\Z/2)$.
\end{subremark}

\begin{lemma}\label{free G-cell}
If $X$ is the free bouquet of spheres $S^n(G_+)=\bigvee_G S^n$,
then $\H_G^3(X,\pt,\Z(1))$ is 0 for $n\ne3$, and $\Z$ for $n=3$.
\end{lemma}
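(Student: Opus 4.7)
The plan is to identify the Borel construction $X_G = X \times_G EG$ up to homotopy, then read off the relative cohomology of the pair $(X_G, BG)$ directly. The key observation is that $X = G_+\wedge S^n$ is an ``induced'' $G$-space: its $G$-fixed basepoint $*$ is a single point, while the complement carries a free $G$-action that swaps the two copies of $S^n\setminus\{*\}$.

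Concretely, $X$ has a $G$-equivariant CW structure obtained from the $G$-fixed $0$-cell $\{*\}$ by attaching one free $G$-cell $G\times D^n$ via the constant map $G\times S^{n-1}\to\{*\}$. Applying $(-)\times_G EG$ to this pushout, the free cell contributes $(G\times D^n)\times_G EG \simeq D^n$ (since $G$ acts freely on the $G$-factor), and the attaching map becomes null-homotopic $S^{n-1}\to BG$. Thus $X_G$ is the pushout of $BG \leftarrow S^{n-1}\to D^n$ with the left map constant, so $X_G\simeq BG \vee S^n$, with the wedge point being a basepoint of $BG$.

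Next I would identify the restriction of $\Z(1)$ to the wedge summand $S^n$. The local system on $X_G$ is classified by the projection $X_G\to BG$, and under the equivalence $X_G\simeq BG\vee S^n$ this projection collapses the $S^n$ summand to the wedge point. Hence $\Z(1)$ restricts to the constant system $\Z$ on $S^n$. The pair sequence then gives $\H_G^*(X,\pt;\Z(1)) = H^*(X_G, BG;\Z(1))$; the wedge retraction $X_G\to BG$ splits the restriction $H^*(X_G;\Z(1))\to H^*(BG;\Z(1))$, so the relative group is the kernel, which by the previous sentence equals $\tilde H^*(S^n;\Z)$. In degree three this is $\Z$ when $n=3$ and $0$ otherwise, as claimed.

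The main obstacle is the bookkeeping with twisted coefficients: making sure that the nontrivial local system $\Z(1)$ restricts trivially on the $S^n$ factor is the step that really uses the induced-space structure of $X$, and it is why the answer involves $\Z$ rather than a twist of it. Once that restriction is pinned down, the long exact sequence of the pair finishes the calculation immediately; in fact the whole computation may be rephrased as a Shapiro-type lemma for Borel cohomology of $G_+\wedge Y$, which removes the explicit geometric pushout but obscures the role of the coefficient system.
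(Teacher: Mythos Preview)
Your argument is correct. The paper's proof is the one-line computation
\[
\H_G^3(S^n(G_+),\pt,\Z(1))\;\cong\;\H_G^{3-n}(G_+,\pt,\Z(1))\;\cong\;H^{3-n}(\pt,\Z),
\]
which invokes the suspension isomorphism for reduced Borel cohomology followed by the induced-space (Shapiro) identification for $G_+$. This is exactly the ``Shapiro-type lemma'' you allude to in your final paragraph. Your route is the more explicit geometric one: you realize $X_G$ as $BG\vee S^n$ by hand via the equivariant cell structure, verify that the local system $\Z(1)$ restricts trivially on the $S^n$ summand, and then split the long exact sequence of the pair using the retraction. This has the virtue of being self-contained and of making visible why the twist disappears, at the cost of more bookkeeping; the paper's version is terser but presupposes those two standard isomorphisms. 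One small remark: for $n\ge2$ the restriction of $\Z(1)$ to $S^n$ is automatically trivial since $S^n$ is simply connected, so your check that the projection $X_G\to BG$ collapses the $S^n$ summand is only genuinely needed when $n\le1$, and in those degrees $\tilde H^3(S^n;-)=0$ for any coefficient system anyway.
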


\begin{proof}
$\H_G^3(S^n(G_+),\pt,\Z(1))\cong \H_G^{3-n}(G_+,\pt,\Z(1))
\cong H^{3-n}(\pt,\Z)$. 
\end{proof}

\begin{lemma}\label{RBr:dim=1}
When $\dim(X)\le1$, $\H_G^3(X,\Z(1))\cong H^0(X^G,\Z/2)$.
\end{lemma}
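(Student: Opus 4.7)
The plan is to compute $\H_G^3(X,\Z(1))$ cell-by-cell using an equivariant CW structure on $X$. After subdivision if necessary, put on $X$ a $G$-CW structure whose cells in each dimension $k\le 1$ are either \emph{free cells} of the form $G\times D^k$ or \emph{fixed cells} $D^k$ on which $G$ acts trivially; then $X^G$ is itself a CW complex built from the fixed cells, and $X_0^G$ is its $0$-skeleton, where $X_0$ denotes the equivariant $0$-skeleton of $X$. I would then analyze the long exact sequence of the equivariant pair $(X,X_0)$ in degrees~$\le 4$.

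First I would compute $\H_G^*(X_0,\Z(1))$ by additivity over orbits. Free $0$-orbits contribute $H^*(\pt,\Z)$, which vanishes in positive degrees, while fixed $0$-cells contribute $H^*(BG,\Z(1))$, which is $\Z/2$ in odd degrees and $0$ in positive even degrees. Consequently $\H_G^2(X_0,\Z(1)) = 0$ and $\H_G^3(X_0,\Z(1)) \cong H^0(X_0^G,\Z/2)$.

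Next I would compute $\H_G^*(X,X_0,\Z(1))$ as a direct sum over $1$-cells. A free $1$-cell $G\times D^1$ contributes $H^*(D^1,S^0,\Z)$, which is concentrated in degree~$1$. A fixed $1$-cell contributes $H^*(D^1\times BG,\,S^0\times BG,\,\Z(1))$; by the relative Künneth formula this equals $H^{*-1}(BG,\Z(1))$, giving $0$ in degree~$3$ and $\Z/2$ in degree~$4$. Summing, $\H_G^3(X,X_0,\Z(1)) = 0$, while $\H_G^4(X,X_0,\Z(1))$ is naturally identified with the $\Z/2$-module $C^1(X^G,\Z/2)$ of cellular $1$-cochains on $X^G$. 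The long exact sequence of the pair thus collapses to
\[
0 \to \H_G^3(X,\Z(1)) \to H^0(X_0^G,\Z/2) \overset{d}{\longrightarrow} C^1(X^G,\Z/2).
\]

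The hardest step is the final identification of $d$ with the cellular coboundary $d^0$ of $X^G$. Unwinding the Künneth isomorphism on a fixed $1$-cell $e$ with endpoints $v_0,v_1$, excision gives an isomorphism $\H_G^4(D^1,S^0,\Z(1)) \cong \Z/2$, and the coboundary $\H_G^3(\{v_0,v_1\},\Z(1)) \cong (\Z/2)^2 \to \Z/2$ is the sum $(a,b)\mapsto a+b$, which is precisely the component of $d^0$ at $e$. The subtle point is that although $\Z(1)$ is a nontrivial $G$-module globally, it is canonically trivialized over $X^G$ because $G$ acts trivially there, so the equivariant signs agree with the untwisted cellular signs on $X^G$. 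Once this identification is in hand, $\ker(d)=H^0(X^G,\Z/2)$, yielding the claimed isomorphism.
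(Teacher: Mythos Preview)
Your argument is correct, but it uses a different decomposition than the paper's. The paper filters by isotropy rather than by skeleton: it takes the pair $(X,X^G)$, observes that the quotient $X/X^G$ is built entirely from free cells (so $\widetilde\H_G^3$ and $\widetilde\H_G^4$ vanish there by the computation for $S^k\wedge G_+$), and deduces $\H_G^3(X,\Z(1))\cong\H_G^3(X^G,\Z(1))$; the right-hand side is then read off from the case of trivial $G$-action (Remark~\ref{H3=H0+H2}), which gives $H^0(X^G,\Z/2)$ since $\dim X^G\le1$. Your skeletal approach via $(X,X_0)$ lands in the same place but requires the extra work of identifying the connecting homomorphism with the cellular coboundary of $X^G$, a step the isotropy decomposition sidesteps entirely. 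On the other hand, your route is more self-contained: it does not invoke the hypercohomology spectral sequence used in Remark~\ref{H3=H0+H2}, and it makes the appearance of $H^0(X^G,\Z/2)$ transparent as the kernel of an explicit $d^0$.

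One phrasing issue: your sentence that ``$\Z(1)$ is canonically trivialized over $X^G$'' is misleading. The local system on the Borel construction $X^G\times BG$ is still nontrivial (it comes from $\pi_1(BG)$ acting by $-1$). What you are actually using is that, on each fixed cell, the relative group is a shifted copy of $H^*(BG,\Z(1))$, and the connecting map is the ordinary difference map tensored with the identity on the $\Z/2$-factor $H^3(BG,\Z(1))$; this is why it agrees with the untwisted cellular coboundary on $X^G$ with $\Z/2$ coefficients. The computation is right, but the justification should be stated this way.
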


\begin{proof}
If $G$ acts freely on $X$, we have $\H_G^3(X,\Z(1))\cong H^3(X/G,\Z)=0$,
and $H^0(X^G)=0$ as well. If $G$ acts trivially on $X$, then 
$\H_G^3(X,\Z(1))\!\cong H^0(X,\Z/2)$ by Example \ref{fixed:RGB}.
Thus we may assume $X$ is connected with a fixed base point.
We need to show that $\H_G^3(X,\pt,\Z(1))=0$. 

For visual simplicity, let us write 
$\widetilde\H_G^n(Y)$ for $\H_G^n(Y,\pt,\Z(1))$ when $Y$ is pointed.
We have an exact sequence
\[
\widetilde\H_G^3(X/X^G)
\to \H_G^3(X,\Z(1))\to \H_G^3(X^G\!,\Z(1)) \to
\widetilde\H_G^3(X/X^G).
\]
The first and last terms in the display are~0, because
$X/X^G$ is a bouquet of copies of $Y=S^1\!\wedge\!(G_+)$,
and $\widetilde\H_G^3(Y)=\widetilde\H_G^4(Y)=0$.
Hence $\H_G^3(X,\Z(1))\cong \H_G^3(X^G\!,\Z/2)$.  
By Example \ref{fixed:RGB} this is $H^0(X^G\!,\Z/2)$.
\end{proof}


\smallskip
\begin{proof}[Proof of Theorem \ref{RBr}]
By Lemma \ref{rho injects}, $\rho_X$ is an injection; we need
to show it is onto.
When $G$ acts trivially, $\rho_X$ is the isomorphism
\[ \RBr(X) \cong H^0(X,\Z/2)\oplus H^2(X,\Z/2)\cong\H_G^3(X,\Z(1)) \]
of Example \ref{fixed:RGB} and Remark \ref{H3=H0+H2}.
When $X=S^n(G_+)$, the map $\rho_X$ is trivially onto,
because $\H_G^3(X,\pt,\Z(1))$ is torsion-free by Lemma \ref{free G-cell}.
Thus $\rho_X$ is a bijection for all of the test $G$-spaces 
$S^n((G/H)_+)$ of the Appendix.
We claim that $\rho_X$ is an isomorphism if $\dim(X)\le1$.
By Brown's Theorem \ref{thm:Brown}, this will imply that
$\rho_X$ is a bijection for all $X$.

So suppose that $\dim(X)=1$ and that $X^G$ has $\nu>0$ components.
By Lemma \ref{H_G^1} and Remark \ref{Xcovers}, there are Real vector
bundles $L_i$ and symmetric forms $\theta_i:L_i^{\oo2}\smap{\cong}\C_X$
on $X$ whose classes $[(L_i,\theta_i)]$ in $\Pic_G(X)\cong\Pic_G(X^G)$
map to a basis of $(\Z/2)^\nu$ under the sign map of Example \ref{ex:sign}. 
The Clifford algebras
$A_i=\Cl(L_i\oplus\C_X,\theta_i\perp1)$ have 
$\hatZ(A_i)=A_0\cong \C_X\oplus L_i$, because this is true fiberwise.
\end{proof}

\begin{proposition}\label{RGB(graph)}
When $X$ is a connected 1-dimensional $G$-complex,
and $X^G$ has $\nu$ components, we have $\RBr(X)\cong(\Z/2)^\nu$ and
\[ 
\RGB(X) = 
\begin{cases}(\Z/8)\oplus(\Z/4)^{\nu-1} \oplus H^1(X/G,\Z/2), & \nu>0, \\ 
             \Z/4\oplus \tH^1(X/G,\Z/2), &\nu=0,
\end{cases}\]
where $\tH^1(X/G,\Z/2)$ denotes the
quotient of $H^1(X/G,\Z/2)$ by the subgroup generated by 
the element $[-1]=w_1(X\!\times\!\R(1))$. 
\end{proposition}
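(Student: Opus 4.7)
The strategy is to combine the exact sequence from Theorem \ref{RBW},
\[
0 \to \RBr(X) \to \RGB(X) \xrightarrow{\hatZ} \Q_2(X) \to 0,
\]
with explicit generators and an order count. By Theorem \ref{RBr} and Lemma \ref{RBr:dim=1}, $\RBr(X) \cong H^0(X^G,\Z/2) = (\Z/2)^\nu$, and restriction to $X^G$ realizes this isomorphism. By Proposition \ref{ext:Q_2} and Lemma \ref{H_G^1}, with the sign map surjective for $\dim X = 1$ (Remark \ref{sign-onto}), one obtains $|\Q_2(X)| = 2^{\nu+1}|H^1(X/G,\Z/2)|$, so $|\RGB(X)| = 2^{2\nu+1}|H^1(X/G,\Z/2)|$---which matches the order of the claimed answer in both cases. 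Thus it suffices to exhibit a surjection from the stated direct sum onto $\RGB(X)$.

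\textbf{Case $\nu = 0$.} Here $\RBr(X) = 0$, so $\RGB(X) = \Q_2(X)$, and $\H_G^1(X,\Z/2) = H^1(X/G,\Z/2)$. By Example \ref{ex:Z/4}, $\Clone$ has order $4$ with $2\,\Clone = Q^{(-)}$; by Proposition \ref{w1-bundles} and Example \ref{Xcovers}, $Q^{(-)}$ corresponds to $[-1] = w_1(X \times \R(1)) \in H^1(X/G,\Z/2)$, which is nonzero since $X \to X/G$ is a nontrivial double cover. For any $\mathbb{F}_2$-linear complement $\tH^1(X/G,\Z/2)$ of $\langle [-1]\rangle$, the map $\Z/4 \oplus \tH^1(X/G,\Z/2) \to \Q_2(X)$ sending the $\Z/4$-generator to $\Clone$ is injective (the summands meet only in $0$) and surjective by order.

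\textbf{Case $\nu > 0$.} Following the proof of Theorem \ref{RBr}, for $i = 1, \ldots, \nu$ I take $(L_i, \theta_i)$ whose sign classes form the standard basis of $(\Z/2)^\nu$, and set $A_i = \Cl(L_i \oplus \C_X, \theta_i \perp 1) \in \RGB(X)$. Each $h \in H^1(X/G,\Z/2)$ lifts via the recipe from the proof of Theorem \ref{RBW} (put $\tilde h = (\C_X \oplus L_h) \oplus (\C_X \oplus L_h)u$) to $\tilde h \in \RGB(X)$. I consider
\[
\phi : \Z/8 \oplus (\Z/4)^{\nu-1} \oplus H^1(X/G,\Z/2) \to \RGB(X),
\]
sending the respective generators to $\Clone, A_2, \ldots, A_\nu$ and $h \mapsto \tilde h$, deliberately omitting $A_1$. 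Fiberwise Clifford-algebra analysis at the fixed points shows: $\Clone$ has order $8$ in $\RGB(X)$ (it pulls back from $\RGB(\pt) = \Z/8$ and restricts to a generator at any $x_i$); each $A_i$ has order $4$ (since $\hatZ(A_i)|_{x_i} = Q^{(-)}$ has order $2$ in $\Z/4$, forcing $A_i|_{x_i}$ to have order $4$ in $\Z/8$); and each $\tilde h$ has order $2$ (its restriction to every fixed point is trivial in $\RGB(\pt)$, so $\tilde h$ lies in the $2$-torsion kernel $H^1(X/G,\Z/2)$ of $\RGB(X) \to \RGB(X^G)$, identified via the snake lemma using that $\RBr(X) \to \RBr(X^G)$ is an isomorphism). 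Surjectivity of $\phi$ then reduces to two checks: the $\Q_2$-images generate $\Q_2(X) \cong \Z/4 \oplus (\Z/2)^{\nu-1} \oplus H^1(X/G,\Z/2)$, and in $\RBr(X) = (\Z/2)^\nu$ the elements $4\,\Clone|_{X^G} = (1, \ldots, 1)$ together with $2\,A_i|_{X^G} = e_i$ for $i = 2, \ldots, \nu$ form a basis, because $e_1 = (1, \ldots, 1) - e_2 - \cdots - e_\nu$. Since source and target have the same order, $\phi$ is an isomorphism.

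\textbf{Main obstacle.} The subtle point in case $\nu > 0$ is the asymmetric use of only $\nu - 1$ of the sign-generators $A_i$: the ``missing'' independent element of $\RBr(X) = (\Z/2)^\nu$ is supplied by $4\,\Clone$, which is why the first factor jumps from $\Z/4$ (the order of $\Clone$ in $\Q_2$) to $\Z/8$ (its order in $\RGB$). Verifying the orders precisely---especially that $\Clone^{\hat\oo 8} = 0$ holds in $\RGB(X)$ (not merely in $\RGB(\pt)$), and that the restrictions to $X^G$ of $4\,\Clone$ and the $2\,A_i$ really span $\RBr(X^G)$---is where the fiberwise Clifford-algebra computations carry the weight of the argument.
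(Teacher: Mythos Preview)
Your strategy coincides with the paper's: both start from the extension $0\to\RBr(X)\to\RGB(X)\to\Q_2(X)\to0$, compute $\RBr(X)\cong(\Z/2)^\nu$ via Lemma~\ref{RBr:dim=1} and $\Q_2(X)$ via Proposition~\ref{ext:Q_2} and Lemma~\ref{H_G^1}, and then (for $\nu>0$) use the restriction $\RGB(X)\to\RGB(X^G)\to(\Z/8)^\nu$ to resolve the extension. The paper states this last step in a single sentence; you instead build an explicit surjection $\phi$ and argue by order count, which is legitimate but requires the fiberwise checks you flag.

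There is, however, a computational slip in your order-$4$ verification for $A_i$. Since $L_i$ has sign $e_i$, over the fixed point $x_i$ one has $(L_i,\theta_i)|_{x_i}\cong(\C,-1)$, so
\[
A_i|_{x_i}=\Cl\langle -1,1\rangle = 0 \in \RGB(\pt)=\Z/8,
\]
not an element of order~$4$; equivalently $(ef)^2=-e^2f^2=+1$ in the even part, so $\hatZ(A_i)|_{x_i}=Q^{(+)}$, not $Q^{(-)}$. (In $\Pic_G(X)$ the class of $\hatZ(A_i)=(\C_X\oplus L_i,-\theta_i)$ is $L_{i,0}\otimes\R(1)$, with sign $\mathbf{1}-e_i$.) The fix is immediate: restrict instead to any $x_j$ with $j\ne i$, where $A_i|_{x_j}=\Cl\langle1,1\rangle=2\in\Z/8$ has order~$4$. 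Correspondingly $2A_i|_{X^G}=\mathbf{1}-e_i$ in $\RBr(X^G)=(\Z/2)^\nu$, not $e_i$; but $\{\mathbf{1},\,\mathbf{1}-e_2,\dots,\mathbf{1}-e_\nu\}$ is still a basis, so your surjectivity argument survives. With this correction your proof goes through; the paper's one-line appeal to $\RGB(X)\to(\Z/8)^\nu$ simply packages the same information without tracking individual generators.
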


\begin{proof}
Recall from Theorem \ref{RBW} that $\RGB(X)$ is an extension of 
$\Q_2(X)$ by $\RBr(X)$, and that $\RBr(X)\cong(\Z/2)^\nu$
by Lemma \ref{RBr:dim=1}. By Proposition \ref{ext:Q_2} and 
Lemma \ref{H_G^1}, $\Q_2(X)$ is a nontrivial extension of $\Z/2$ by 
$\Q(X)\cong (\Z/2)^\nu\oplus H^1(X/G,\Z/2)$.
The case $\nu=0$ is now immediate.

If $\nu>0$ then $\Q_2(X)\cong \Z/4\oplus(\Z/2)^{\nu-1}\oplus H^1(X/G,\Z/2)$
by Example \ref{ex:sign}, Lemma \ref{H_G^1} and
Proposition \ref{ext:Q_2}. The projection 
$\RGB(X)\to\RGB(X^G)\to\oplus_\nu\RGB(\pt)\cong(\Z/8)^\nu$ 
shows that the extension $\RGB(X)$ of $\Q_2(X)$ by $\RBr(X)$ 
is as described.
\end{proof}

\begin{subremark}
When $X$ is connected, $\dim(X)=1$ and $\nu>0$ we have 
\[
WR(X)\cong 
\Z^{\nu}\oplus H^1(X/G,\Z/2),
\]
and $WR(X)\to\RGB(X)$ is onto.
Since the proof requires slightly more machinery, 
we will prove this in \cite[3.1]{KW}.
\end{subremark}

\begin{example}
Let $X$ be a compact connected oriented 2-manifold of genus $g$.
If $G$ acts freely on $X$, then
\[ 
\RGB(X) \cong \Z/4\oplus (\Z/2)^g.
\]
Indeed, we saw in \cite[4.6]{KSW} that
$H^1(X/G,\Z/2)\cong(\Z/2)^{g+1}$; by Lemma \ref{H_G^1} this is
$\H_G^1(X,\Z/2)$. Since $\H_G^3(X)=H^3(X/G)=0$, 
$\RGB(X)\cong\Q_2(X)$
and the result follows from Proposition \ref{ext:Q_2}.

If $G$ does not act freely, and
$X^G$ is the union of $\nu>0$ circles, similar calculations 
(which we omit) show that $\RBr(X)=(\Z/2)^\nu$ and
$\RGB(X)\cong \Z/8\oplus(\Z/4)^{\nu-1}\oplus(\Z/2)^g$.
\end{example}

\begin{proposition}\label{RGB:curves}
Let $V$ be a smooth projective curve of genus $g$, 
defined over $\R$ (and geometrically irreducible).
If $V(\R)$ has $\nu$ components, then
\[
\BW(V) \map{\cong}\RGB(X) \cong \begin{cases}
\Z/4 \oplus (\Z/2)^g, & \nu=0;\\
\Z/8 \oplus (\Z/4)^{\nu-1} \oplus(\Z/2)^g, & \nu>0.
\end{cases}
\]
We also have $\BW(V) \map{\cong}\RGB(V_\top)$ when
$V$ is a smooth affine curve defined over $\R$.
\end{proposition}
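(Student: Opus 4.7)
The plan is to use Lemma \ref{Cox-H1} to reduce the asserted isomorphism $\BW(V)\cong\RGB(V_\top)$ to the corresponding statement $\Br(V)\cong\RBr(V_\top)$ for ungraded Brauer groups, and then to read off the topological right-hand side from the preceding computations, matching it to a classical algebraic computation of $\Br(V)$.

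On the topological side, when $V$ is smooth projective of genus $g$, the space $V_\top$ is a compact oriented $2$-manifold of genus $g$ on which complex conjugation acts with fixed locus $V_\top^G=V(\R)$, a disjoint union of $\nu$ circles. The example immediately preceding this proposition then gives the stated formula for $\RGB(V_\top)$. When $V$ is affine, choose a smooth projective completion $\bar V\supset V$; the complement $\bar V\setminus V$ is a finite $G$-invariant set of closed points, and $V_\top$ deformation retracts $G$-equivariantly onto a connected $1$-dimensional $G$-CW complex. Proposition \ref{RGB(graph)} then computes $\RGB(V_\top)$ in this case.

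For the algebraic comparison, Theorem \ref{RBr} combined with the topological computation gives $\RBr(V_\top)\cong(\Z/2)^\nu$ for $\nu\ge 1$ (with Lemma \ref{RBr:dim=1} covering the affine case directly), and $\RBr(V_\top)=0$ for $\nu=0$ in the projective case (because $G$ then acts freely on the surface $V_\top$, so $\H^3_G(V_\top,\Z(1))\cong H^3(V_\top/G,\Z(1))=0$). The matching algebraic computation of $\Br(V)$ proceeds via the Hochschild--Serre spectral sequence $H^p(G,H^q_\et(V_\C,\Gm))\Rightarrow H^{p+q}_\et(V,\Gm)$, using Tsen's theorem $\Br(V_\C)=0$ together with the $G$-cohomology of $\Pic(V_\C)$ for a smooth curve; the outcome matches $\RBr(V_\top)$, and naturality of the comparison map (which at real points / fixed circles reduces to the identity $\Br(\R)\cong\RBr(\pt)\cong\Z/2$) identifies it as an isomorphism.

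The main obstacle I expect is this last step: upgrading the agreement of orders to an honest isomorphism $\Br(V)\cong\RBr(V_\top)$. The cleanest route is probably to identify both sides uniformly with torsion subgroups of $H^3$ with $\Z(1)$ coefficients (Gabber's theorem on the algebraic side, Theorem \ref{RBr} on the topological side) and then to invoke a version of Cox's theorem with $\Z(1)$ coefficients, reducing the comparison to a manifestly natural equivariant cohomological statement. The routine equivariant deformation-retraction needed in the affine case is then the only remaining ingredient.
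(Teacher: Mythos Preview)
Your approach matches the paper's: reduce via Lemma~\ref{Cox-H1} to the comparison $\Br(V)\to\RBr(V_\top)$, then compute each side. The paper simply cites \cite[0.1, 0.5]{PW} for $\Br(V)\cong(\Z/2)^\nu$ and $H^1_\et(V,\Z/2)\cong(\Z/2)^{g+1+s}$, rather than running Hochschild--Serre, and reads $\RBr(V_\top)\cong(\Z/2)^\nu$ off the preceding example; otherwise the strategy is identical.

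Two remarks. First, the gap you worry about---turning equal orders into an isomorphism of the map $\Br(V)\to\RBr(V_\top)$---is indeed glossed over in the paper's one-line ``it suffices to observe\ldots''. Your instinct to resolve it by restriction to real points is the right one, and it is simpler than the $\Z(1)$-coefficient Cox-type argument you float: the isomorphisms $\Br(V)\smap{\cong}\prod_\nu\Br(\R)$ and $\RBr(V_\top)\smap{\cong}\RBr(V_\top^G)\cong(\Z/2)^\nu$ (the latter via the preceding example) are both detected by restriction to one point on each real component, and these restrictions are manifestly compatible with the comparison map of Example~\ref{Br-RBr}. No extension of Cox to $\Z(1)$ coefficients is needed.

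Second, you treat the affine case more carefully than the paper's own proof, which states it but does not argue it. Your equivariant deformation retraction of $V_\top$ onto a $1$-dimensional $G$-complex, followed by Proposition~\ref{RGB(graph)} and Lemma~\ref{RBr:dim=1}, is the natural way to handle it; on the algebraic side the same restriction-to-real-points argument applies, since $\Br(V)\to\prod_\nu\Br(\R)$ is still an isomorphism for smooth affine curves over $\R$.
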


\begin{proof}
Set $X=V_\top$.  We know that $H^1_\et(V,\Z/2)\cong (\Z/2)^{g+1+s}$,
where $s=0$ if $\nu=0$ and $s={\nu-1}$ if $\nu>0$; see \cite[0.5]{PW}.
By Proposition \ref{ext:Q_2}, we have
$\Q_2(V)\cong\Q_2(X)\cong \Z/4 \oplus (\Z/2)^{g+s}$.
By Theorem \ref{RBr}, it suffices to observe that $\Br(V)=(\Z/2)^\nu$
(see \cite[0.1]{PW}) and $\RBr(V_\top)=(\Z/2)^\nu$ as well.
\end{proof}

\section{Stiefel--Whitney classes}\label{sec:WR}

\noindent
Following Atiyah and Segal \cite{AS}, we recall that vector bundles on $\XG$
may be identified with equivariant vector bundles on $X\!\times\!EG$;
the pullback of a bundle on $\XG$ to $X\!\times\! EG$
is an equivariant vector bundle.
Thus the map $X\!\times\!EG\to X$ induces natural ``Atiyah--Segal'' maps
$KO_G(X)\to KO(\XG)$, where $KO(\XG)$ is defined to be
representable $K$-theory.

\begin{defn}\label{def:w_n}
The equivariant Stiefel-Whitney classes 
$$w_n:KO_G(X)\to \H^n_G(X,\Z/2)$$
are the composition of the Atiyah-Segal map 
with the usual Stiefel-Whitney classes
$w_n:KO(\XG)\to H^n(\XG,\Z/2)\cong \H^n_G(X,\Z/2)$.
\end{defn}

When $X=G\times Y$, for example, 
we have $KO_G(X)\cong KO(Y)$ and $\H^n_G(X)\cong H^n(Y)$.
In this case, the $w_n$ are the usual Stiefel-Whitney classes
$w_n: KO(Y)\to H^n(Y,\Z/2)$.

\begin{subremark}\label{Chern}
In \cite{Kahn}, Bruno Kahn defined equivariant Chern classes 
$c_n:KR(X)\to \H^{2n}_G(X,\Z(n))$ for Real vector bundles, with
the first Chern class $c_1$ inducing an isomorphism between the group of
rank~1 Real vector bundles on $X$ and $\H_G^2(X,\Z(1))$.
($\Z(1)$ is the sign representation of $G$.) 
In particular, $c_1:KR(X)\to \H_G^2(X,\Z(1))$ is a surjection.
\end{subremark}



In the algebraic setting, the discriminant does not factor 
through $W(V)$, because the discriminant of $h(E)$ is $(-1)^{\rank E}$;
instead, it factors through the ideal $I(V)$. The same is true
in our setting: $w_1$ does not factor through $WR(X)$;
the case $X=\pt$ shows that the composition 
$KR(X)\map{h}KO_G(X)\map{w_1}\H_G^1(X,\Z/2)$
need not be zero.

To see that $w_1$ factors through the ideal $I(X)$
of forms in $WR(X)$ of even degree, let $\hat{I}(X)$ denote the 
kernel of $\rank:KO_G(X)\to\Z$.
The quotient map $KO_G(X)\to WR(X)$ sends $\hat{I}(X)$ onto the
ideal $I(X)$ of $WR(X)$ because if $E$ has rank $2n$ then
$[E]-[h(n)]$ has rank~0.
By Proposition \ref{w1-bundles} and Lemma \ref{WR/I2}, the map 
\[
WR(X)\ \map{\Cl}\ \RGB(X)\map{\hatZ} \Q_2(X)
\]
sends $I(X)$ of $WR(X)$ to the subgroup $\H_G^1(X,\Z/2)$ of $\Q_2(X)$.

\begin{lemma}\label{w1 on I}
The composition 
\[
\hat{I}(X)\to I(X)\subset WR(X) \map{\Cl}\RGB(X)\map{\hatZ}\ \Q_2(X)
\]
agrees with 
the Stiefel--Whitney class $w_1:KO_G(X)\to\H_G^1(X,\Z/2)$.
Hence it induces a
Stiefel--Whitney class $w_1:I(X)\to\H_G^1(X,\Z/2)$.
\end{lemma}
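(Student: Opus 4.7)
The plan is to show that both $\hatZ\circ\Cl$ (restricted to $\hat I(X)$) and $w_1$ are group homomorphisms landing in $\H_G^1(X,\Z/2)\subset\Q_2(X)$, then verify they agree on generators by a fiberwise computation. Additivity of the composition follows from Lemma \ref{C-homom} and Theorem \ref{RBW}, while the fact that it lands in the subgroup $\H_G^1(X,\Z/2)$ on $\hat I(X)$ (rather than all of $\Q_2(X)$) is immediate from Lemma \ref{WR/I2}, since $\hat I(X)$ consists of virtual bundles of rank~0 (and in particular of even rank).

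The core calculation is fiberwise. For a Real quadratic bundle $(E,q)$ of rank $2m$, the classical structure theorem (Knus \cite[IV(2.2.3)]{Knus} or Lam \cite[IV]{Lam}) shows that in each fiber, $\hatZ(\Cl(E,q))_x \cong \C\oplus \C\omega_x$, where $\omega_x = e_1\cdots e_{2m}$ is the Clifford volume element computed in a local basis, and $\omega_x^2 = (-1)^m\det(q_x)$. The line $\C\omega_x$ is canonically identified with the top exterior power $\det(E_x)$. Globalizing, the underlying Real line bundle of $\hatZ(\Cl(E,q))$ is $\det(E)$, possibly twisted by a rank-dependent sign character $\R_X(1)^{\otimes m}$ coming from the sign $(-1)^m$.

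By Proposition \ref{w1-bundles}, the class of a Real quadratic algebra in $\H_G^1(X,\Z/2)$ is the first Stiefel--Whitney class of its underlying $\R$-linear $G$-line bundle. Since the Whitney sum formula gives $w_1(\det E) = w_1(E)$, one obtains $\hatZ(\Cl(E,q)) = w_1(E)$ plus the rank-dependent twist. On the class $[E]-[\R^{2m}]\in\hat I(X)$, this twist contributes identically to both terms and cancels, yielding $w_1(E)-w_1(\R^{2m}) = w_1(E)$ and matching $w_1|_{\hat I(X)}$. Since $\hat I(X)$ is generated by such differences of equal rank and both maps are additive, the identification follows.

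The main obstacle is tracking the Real (equivariant) structure on the volume element $\omega$ through the Clifford construction, and confirming that the $(-1)^m$ sign twist really does cancel on $\hat I(X)$ as claimed rather than leaving a residual contribution. A cleaner alternative is to restrict attention to rank-$2$ generators of $\hat I(X)$, where one can explicitly compute $\Cl(E,q)$ and its graded centralizer, and compare the resulting class in $\Pic_G(X)\cong \H_G^1(X,\Z/2)$ directly with $w_1(E)$, calibrated against the known case $X=\pt$ of $\RGB(\pt)=\Z/8$ recorded in Example \ref{fixed:RGB}.
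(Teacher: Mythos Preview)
Your proposal is correct and takes essentially the same approach as the paper: both identify $\hatZ(\Cl(E,q))$ fiberwise as $\C\oplus\wedge^{d}E$ via the Clifford volume element and then invoke the determinant description of $w_1$ through Proposition~\ref{w1-bundles}. Your explicit tracking of the sign $(-1)^m$ in $\omega^2$ and its cancellation on $\hat I(X)$ is extra care that the paper's terser proof elides, and the rank-$2$ alternative sketched in your final paragraph is not needed once that cancellation is in hand.
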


\begin{proof}
By construction, the map $w_1:KO_G(X)\to\H_G^1(X,\Z/2)$
is the composition of the determinant map $KO_G(X)\to\Pic_G(X)$
with the isomorphism between $\Pic_G(X)\cong\Pic_G(X\times EG)$
and $\H_G^1(X,\Z/2)$. Given an equivariant $\R$-linear bundle $F$
of even rank $d$, corresponding to the symmetric form $q$ on the
Real bundle $E=F\oo\C$,
let $\cA$ denote the Clifford algebra $\Cl(q)$; the determinant bundle
$\wedge^dE$ is a summand of $\cA_0$. A direct calculation 
shows that $\hatZ(\cA)_x=\C\oplus\wedge^dE_x$ on each fiber,
and hence that $\hatZ(\cA)=\wedge^dE$, as asserted.
\end{proof}

\begin{theorem}\label{w1:factors}
The algebraic discriminant of a smooth variety $V$ factors as
\[
I(V) \to I(V_\top)\ \map{w_1}\ \H^1_G(V_\top,\Z/2)\cong H^1_\et(V,\Z/2).
\]
\end{theorem}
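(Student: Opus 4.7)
The plan is to identify both the algebraic discriminant on $I(V)$ and the topological Stiefel--Whitney class $w_1$ on $I(V_\top)$ as the same composition through the Clifford algebra and the graded centralizer, and then conclude by invoking the commutative square of Lemma \ref{Cox-H1}.

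First, I would exhibit the algebraic discriminant on $I(V)$ as the composition
\[
I(V) \hookrightarrow W(V) \map{\Cl} \BW(V) \map{\hatZ} \Q_2(V) \to H_\et^1(V,\Z/2).
\]
For a symmetric form $(\cE,q)$ of even rank, a direct fiberwise Clifford-algebra computation (the algebraic parallel of the one carried out in the proof of Lemma \ref{w1 on I}; compare Knus \cite{Knus}) gives $\hatZ(\Cl(\cE,q)) \cong \cO_V \oplus \det(\cE)$. Since the rank is even, the image under $\pi: \Q_2(V) \to H_\et^0(V,\Z/2)$ of \eqref{eq:Q_2} vanishes, so the class actually lies in the subgroup $H_\et^1(V,\Z/2) \subset \Q_2(V)$, and under the identification $H_\et^1(V,\Z/2) \cong \Pic(V)/2$ it recovers the classical signed discriminant.

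Second, I would paste this composition onto its topological analogue via the naturality diagram
\[
\xymatrix@R=1.2em@C=1.3em{
I(V) \ar[r]\ar[d] & W(V) \ar[r]^{\Cl}\ar[d] & \BW(V)\ar[r]^{\hatZ}\ar[d] & \Q_2(V) \ar[r]\ar[d]^{\cong} & H^1_\et(V,\Z/2)\ar[d]^{\cong} \\
I(V_\top) \ar[r] & WR(V_\top) \ar[r]^{\Cl} & \RGB(V_\top)\ar[r]^{\hatZ} & \Q_2(V_\top) \ar[r] & \H^1_G(V_\top,\Z/2).
}
\]
The two leftmost squares commute because analytification sends an algebraic symmetric form $(\cE,q)$ to a Real symmetric form on $V_\top$ whose topological Clifford algebra is the analytification of $\Cl(\cE,q)$. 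The middle square commutes by naturality of $\hatZ$ under the fiberwise map $\BW(V)\to\RGB(V_\top)$ of Example \ref{Br-RBr}. The rightmost square is the right-hand half of the commutative diagram in Lemma \ref{Cox-H1}, supplemented by the naturality of the extensions \eqref{eq:Q_2} and Proposition \ref{ext:Q_2} under Cox's isomorphism.

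Third, by Lemma \ref{w1 on I} the bottom composition in the diagram is $w_1 : I(V_\top) \to \H^1_G(V_\top,\Z/2)$, and by Step 1 the top composition is the algebraic discriminant. Commutativity then yields the desired factorization. The main obstacle is Step 1: the topological half is handled for us by Lemma \ref{w1 on I}, but the excerpt contains no explicit algebraic counterpart, so one must either quote the classical Clifford-algebra identity $\hatZ(\Cl(\cE,q)) \cong \cO_V \oplus \det(\cE)$ for even-rank $(\cE,q)$ from Knus or Bass, or carry out the short fiberwise verification directly. Once that is in hand, the rest of the argument is pure naturality.
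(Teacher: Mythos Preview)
Your approach is sound and in fact more structural than the paper's, but it differs substantially in strategy and contains one factual slip.

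The paper does \emph{not} invoke the Clifford-algebra identification of the algebraic discriminant. Instead it argues by reduction: given $(E,\theta)-(\cO_V^n,1)$ in $I(V)$, it passes to $(\det E,\det\theta)$ to reduce to rank~1, then restricts to a dense open $U$ on which $E$ is trivial and $\theta$ is a global unit $a$; this restriction is harmless because the Bloch--Ogus sequence gives an injection $H^0(V,\cH^1)\hookrightarrow H^1_\et(k(V))$. The problem is now to compare $\disc(\cO_V,a)=[a]$ with $w_1(\C_X,a)$. For this the paper uses a universal example: the unit $a$ defines an equivariant map $V_\top\to T=S^{1,1}$, and Example~\ref{theta=t} computes $w_1(\C_T,t)=[t]$ directly on the circle, whence $w_1(\C_X,a)=a^*[t]=[a]$ by naturality.

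Your route---identifying both the algebraic discriminant and the topological $w_1$ with $\hatZ\circ\Cl$, and then using naturality of analytification together with Lemma~\ref{Cox-H1}---avoids both the Bloch--Ogus restriction step and the explicit circle computation, at the cost of importing the algebraic identity that $\hatZ(\Cl(\cE,q))$ is the discriminant algebra from Knus or Bass. This is a legitimate and arguably cleaner alternative. One correction is needed, however: you write ``$H^1_\et(V,\Z/2)\cong\Pic(V)/2$,'' which is false. The Kummer sequence yields an extension
\[
0\to\cO(V)^\times/\cO(V)^{\times2}\to H^1_\et(V,\Z/2)\to{}_2\Pic(V)\to0,
\]
and it is this full extension that $\Q(V)$ realizes. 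The quadratic algebra $\hatZ(\Cl(\cE,q))\cong\cO_V\oplus\det(\cE)$ records not only the 2-torsion line bundle $\det\cE$ but also the form $\det(q):\det(\cE)^{\otimes2}\smap{\cong}\cO_V$, and the latter carries the unit part of the discriminant (e.g.\ for $(\cO_V,a)$ the line bundle is trivial but the discriminant is $[a]$). Your argument survives this correction, but the misidentification should be fixed, and your formula for $\hatZ(\Cl(\cE,q))$ should explicitly include the form.
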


\begin{proof} 
It suffices to consider elements of $W(V)$ of the form
$u=(E,\theta)-(\cO_V^n,1)$, where $\theta$ is a symmetric form on 
an algebraic vector bundle $E$ of rank $n$. 
Since $\disc(u)=\disc(\det E,\det\theta)$, we may
replace $E$ by $\det E$ to assume that $E$ has rank~1.  
Note that $\disc(\cO_V,1)$ is trivial.

By restricting $V$ to an open subvariety $U$, 
we may assume that $E$ is trivial and $\theta$ is a global unit.
This doesn't affect the discriminant, 
as $H^1_\et(V,\Z/2)\cong H^0(V,\cH^1)$ is a subgroup of 
$H^1_\et(U,\Z/2)$, by the Bloch--Ogus sequence
\[
0\to H^0(V,\cH^n) \to H^n_\et(k(V)) \to \oplus_x\ H^{n-1}_\et(k(x)),
\]
and the sequence of
Theorem \ref{w1:factors} is natural in $V$.
Since the discriminant is a homomorphism, 
we are reduced to the case when $E$ is the
trivial bundle $\cO_V$ and $\theta$ is given by a global
unit $a$ of $H^0(V,\cO_V)\subset F$.  
By construction, the algebraic discriminant sends 
$[\cO_V,a]$ to the class of $a\in F^\times/F^{\times2}$.  
Since the map $W(V)\to WR(V_\top)$ sends the class of $(\cO_V,a)$ 
to the class $(V_\top\!\times\!\C,a)$, we need to evaluate $w_1$
on forms $(E,a)$, $E=V_\top\!\times\!\C$.

By Example \ref{theta=t}, the trivial Real line bundle $\C_T$ on
the unit circle $T$ in $\C$ carries a canonical
Real symmetric form $\theta=t$, which is multiplication by $t$ on 
the fiber over $t\in T$, and $w_1(\C_T,t)$ is nontrivial in
$\H_G^1(T,\Z/2)\cong\{\pm1\}^2$.
If $V=\Spec(A)$, $A=\R[t,1/t])$, then $V_\top=\C-\{0\}\simeq T$ and
under the isomorphism 
$\H_G^1(T,\Z/2)\cong H_\et^1(V,\Z/2) \cong A^\times/A^{\times2}$,
$w_1(\C_T,t)$ is the class of the unit $t$ of $A$.

A global unit $a$ of a variety $V$ over $\R$ defines an 
equivariant map $V_\top\smap{a} \C-\{0\}\simeq T$, and 
$a^*: KO_G(T)\to KO_G(V_\top)$ sends $(T_\C,t)$ to $(E,a)$.
By naturality,
\[
w_1(E,a) = a^*w_1(T,t) = a^*[t]=[a].
\qedhere\]
\end{proof}

We remark that Theorem \ref{w1:factors} is well known in the affine
case; see \cite[V.2.5]{Lam}. Our argument uses $\hat{I}(V)$
to avoid the cases of the signed determinant that arise in loc.\,cit.\ 



\medskip\goodbreak
\begin{center}{\it The class $w_2$}\end{center}
\smallskip

Recall that rank~2 real bundles on $Y$ are classified by
$H^1(Y,O_2)$, and that the orthogonal group $O_2$ is the 
semidirect product $S^1\rtimes O_1$, where $O_1\cong\Z/2$ 
is the diagonal subgroup $\mathrm{diag}(\pm1,1)$ of $O_2$. 
($O_1$ acts by complex conjugation).  
Multiplication by~2 on $S^1$ extends to an 
endomorphism $q$ of $O_2$ fixing $O_1$.
Thus there is an exact sequence
\begin{equation}\label{eq:O2}
1 \to \Z/2 ~\map{(\pm1,1)}~ O_2 ~\map{q}~ O_2 \to 1.
\end{equation}
The boundary map $H^1(Y,O_2) \map{\partial} H^2(Y,\Z/2)$
gives an invariant of rank~2 real bundles.

\begin{lemma}\label{w_2}
Let $E$ be an $\R$-linear vector bundle on $Y$ of rank~2, classified by
$\xi\in H^1(Y,O_2)$. Then the element $\partial(\xi)$ of
$H^2(Y,\Z/2)$ is the Stiefel--Whitney class $w_2(E)$.
\end{lemma}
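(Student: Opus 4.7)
The plan is to reduce to the universal case $Y=BO_2$ and then evaluate the universal characteristic class $\partial\in H^2(BO_2,\Z/2)$ by restriction to two detecting subgroups. By naturality of the boundary map and of Stiefel--Whitney classes, it suffices to show $\partial=w_2$ in $H^2(BO_2,\Z/2)$, where $\partial$ now refers to the boundary applied to the tautological rank $2$ bundle. Since $H^*(BO_2,\Z/2)=\Z/2[w_1,w_2]$, the degree-$2$ part is a two-dimensional $\Z/2$-vector space with basis $\{w_1^2,w_2\}$, so one can write $\partial=\alpha w_2+\beta w_1^2$ and the task becomes to show $\alpha=1$ and $\beta=0$.

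To determine $\alpha$, I would restrict along $SO_2=S^1\hookrightarrow O_2$. Since $q$ restricts to the squaring map on $S^1$, the pullback of \eqref{eq:O2} is the sequence $1\to\Z/2\to S^1\to S^1\to 1$ in which the second arrow is $z\mapsto z^2$. Its boundary $H^1(Y,S^1)\to H^2(Y,\Z/2)$ is identified, via the exponential sequence and the multiplication-by-$2$ sequence $0\to\Z\to\Z\to\Z/2\to 0$, with the mod $2$ reduction $H^2(Y,\Z)\to H^2(Y,\Z/2)$ of the Euler class. For an oriented rank $2$ real bundle this is precisely $w_2$, so $\alpha=1$.

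To determine $\beta$, I would restrict along $O_1=\{I,\mathrm{diag}(-1,1)\}\hookrightarrow O_2$. Writing elements of the target $O_2$ as $(\theta,\epsilon)\in SO_2\rtimes\{\pm1\}$, the subgroup $O_1$ consists of $\{(0,1),(0,-1)\}$, and its preimage under $q(\theta,\epsilon)=(2\theta,\epsilon)$ is $\{(0,\pm1),(\pi,\pm1)\}$. A direct multiplication check shows this four-element group is the Klein four-group, and the inclusion $(0,\pm1)\mapsto(0,\pm1)$ splits the pullback of \eqref{eq:O2}. Therefore $\partial$ restricts to $0$ on $BO_1$. On the other hand, $w_1^2$ restricts to the nonzero class $a^2$, where $a$ is the generator of $H^1(BO_1,\Z/2)$, while $w_2$ of the associated bundle $L\oplus\R$ vanishes; this forces $\beta=0$.

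The main subtlety is keeping track of the identification of the generator of the kernel $\Z/2$ in \eqref{eq:O2} (the element $-I\in O_2$) with the usual generator of $\Z/2$ in the Bockstein coefficient sequence; once this matching is verified at the level of the squaring extension of $S^1$, the rest of the argument is a straightforward computation in the group-theoretic restrictions above.
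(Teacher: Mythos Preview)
Your proposal is correct and follows essentially the same strategy as the paper: reduce to the universal case $Y=BO_2$, write the universal class as a combination of $w_1^2$ and $w_2$, and determine the two coefficients by restriction to $BSO_2$ and $BO_1$. Your argument is in fact more explicit than the paper's at both detection steps---you spell out why the boundary restricted to $SO_2$ is the mod~2 Euler class, and you verify directly that $q^{-1}(O_1)$ is a Klein four-group split over $O_1$---whereas the paper simply asserts that $\partial(\xi)$ restricts to $w_2(E')$ on $BSO_2$ and that the pullback by $q$ over $BO_1$ is trivial.
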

 
\begin{proof}(Folklore)
Since $E$ is the pullback of $EO_2$, the universal bundle on $BO_2$,
we may assume that $Y=BO_2$ and $E=EO_2$. 
Now the vector space $H^2(BO_2,\Z/2)$
is 2-dimensional, with basis $\{w_1^2, w_2\}$, so we can write
$\partial(\xi)$ as $aw_1^2(E)+bw_2(E)$. The restriction $E'$ of $E$ to $BSO_2$
is an oriented bundle satisfying $w_1^2(E')=0$ and $w_2(E')\ne0$ 
\cite[12.4]{MS}, so $\partial(\xi)$ restricts to $w_2(E')$ in
$H^2(BSO_2,\Z/2)\cong\Z/2$; hence $b=1$. On the other hand, 
the restriction of $EO_2$ to $BO_1=\R\bP^\infty$ is $L\oplus1$, 
and its pullback by $q$ is trivial.
Since $w_1^2(L)\ne0$, $w_2(L)=0$ we see that $a=0$.
\end{proof}

\goodbreak
\begin{proposition}\label{WR-w2}
The reduction modulo~2 of the equivariant Chern class $c_1$ of a
Real vector bundle is the equivariant Stiefel-Whitney class $w_2$
of its underlying $\R$-linear bundle.
That is, the left square commutes in the diagram: 
\begin{align*}\xymatrix{
KR(X) \ar[d]^{c_1} \ar[r]^h & KO_G(X) \ar[d]^{w_2}\ar[r]^{\mathrm{onto}}
& WR(X)\ar@{-->}[d]^{\bar{w}_2} \ar[r] &0 \\
\H^2_G(X,\Z(1))\ar[r]  & \H^2_G(X,\Z/2)\ar[r]^{\tilde\beta} & 
{_2}\H^3_G(X,\Z(1))\ar[r] &0.
}\end{align*}
Hence the map $w_2:KO_G(X)\to \H^2_G(X,\Z/2)$ induces a homomorphism
\[ \bar{w}_2:WR(X)\to {_2}\H^3_G(X,\Z(1)). \]
\end{proposition}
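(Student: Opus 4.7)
The second assertion---existence of $\bar w_2$---is a formal consequence of commutativity of the left square. Exactness of the bottom row says that the image of the mod~$2$ reduction $\H^2_G(X,\Z(1))\to\H^2_G(X,\Z/2)$ equals $\ker\tilde\beta$; so once $w_2\circ h$ equals $(\bmod 2)\circ c_1$, the composition $\tilde\beta\circ w_2\circ h$ vanishes and $\tilde\beta\circ w_2$ factors through $WR(X)=\coker(h)$ to produce $\bar w_2$.

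For commutativity of the left square, the plan is to realize both vertical maps as connecting homomorphisms of compatible $G$-equivariant short exact sequences of sheaves, and then invoke naturality of the boundary map. The relevant morphism of extensions is
\begin{align*}\xymatrix@R=1.2em@C=1.5em{
0\ar[r] & \Z(1)\ar[r]\ar[d]_{\bmod 2} & \cO_\R\ar[r]^{\exp}\ar[d] & U_1\ar[r]\ar[d] & 0 \\
0\ar[r] & \Z/2\ar[r]  & O_2\ar[r]^{q} & O_2\ar[r] & 0,
}\end{align*}
where the top row is the equivariant exponential sequence from Lemma \ref{Q/Z(1)} (whose boundary map $\H^1_G(X,U_1)\to\H^2_G(X,\Z(1))$ identifies with Kahn's $c_1$ on Real line bundles), and the bottom row is the extension \eqref{eq:O2} (whose boundary yields $w_2$ by Lemma \ref{w_2}). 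The right-hand vertical is the inclusion $U_1=SO_2\hookrightarrow O_2$, which at the level of classifying data sends a Real line bundle $L$ to its underlying equivariant $\R$-linear rank~$2$ bundle $L_\R$; the middle vertical is $t\mapsto R_{\pi t}\in SO_2$, and the left-hand vertical is the natural reduction $\Z(1)\twoheadrightarrow\Z/2$. Compatibility of the kernel maps is checked by tracing $2\pi\Z\subset\R$ into $\{0,\pi\}=\Z/2$, giving reduction mod~$2$; $G$-equivariance is automatic, since complex conjugation on $U_1$ corresponds to conjugation by $\mathrm{diag}(1,-1)\in O_2$, an inner automorphism that is invisible in non-abelian cohomology. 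Applying $\H^*_G(X,-)$ produces the commutative square $w_2(L_\R)=c_1(L)\bmod 2$; combined with Remark \ref{GR=KOG} (which identifies $h(L)$ with $L_\R$ in $KO_G(X)$), this settles the left square of the proposition for Real line bundles.

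To extend to a general Real vector bundle $E$ on $X$, I would use an equivariant splitting principle: after pulling $E$ back along a suitable equivariant flag bundle $F\to X$, whose equivariant Borel cohomology $\H^*_G(F,-)$ is a free module over $\H^*_G(X,-)$ (by a Leray--Hirsch argument), the bundle $E$ decomposes as a direct sum of Real line bundles $L_1\oplus\cdots\oplus L_n$. Since $\H^*_G(X,-)\hookrightarrow \H^*_G(F,-)$ is injective, it suffices to verify the identity after pullback to $F$. There, additivity of $c_1$ on $KR$, together with the Whitney product formula for $w_2\circ h$ on $\bigoplus L_{i,\R}$, reduces the identity to the already-settled rank~$1$ case.

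\textbf{The main obstacle} is precisely this final reduction. The Whitney formula for $w_2(E_\R)$ on a direct sum of Real line bundles produces cross terms of the form $w_1(L_{i,\R})\cdot w_1(L_{j,\R})\in \H^2_G(X,\Z/2)$ that are not visible on the $c_1$ side; controlling these cross terms (either by showing they lie in the image of the mod~$2$ reduction from $\H^2_G(X,\Z(1))$, or by identifying a compensating correction in the definition of $c_1$ for higher-rank Real bundles) is the technical crux of the argument and the step requiring the most care.
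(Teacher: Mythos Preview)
Your approach is exactly the paper's: prove the line-bundle case via the morphism of exponential-type sequences
\[
\xymatrix@R=1.2em{
1\ar[r]& \Z(1)\ar[r]\ar[d] &\R(1)\ar[r]^{\exp}\ar[d]& S^1 \ar[r]\ar[d]^{j} &1 \\
1\ar[r]& \Z/2  \ar[r] & O_2\ar[r]^{q}    & O_2 \ar[r] &1
}
\]
(the paper writes $\R(1)$ rather than $\cO_\R$ in the middle, but the content is the same), then invoke Atiyah's splitting principle for Real bundles to reduce the general case to rank~$1$. The paper simply asserts that the splitting principle makes this reduction legitimate and says nothing about the Whitney cross terms.

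Your worry about those cross terms is not only the crux---it is a genuine obstruction that the paper does not address. For any Real line bundle $L$, the antilinear involution reverses the complex orientation of $L_\R$, so $\det(L_\R)\cong\R_X(1)$ as a $G$-line bundle and hence $w_1(L_\R)=[\R_X(1)]=:\tau\in\H^1_G(X,\Z/2)$, which is nonzero for connected $X$ by Example~\ref{Xcovers}. Splitting a rank-$n$ Real bundle therefore produces a cross term $\binom{n}{2}\tau^2$ in $w_2$, and this is \emph{not} in the image of $\H^2_G(X,\Z(1))\to\H^2_G(X,\Z/2)$ in general: already for $X=\pt$ one has $\H^2_G(\pt,\Z(1))=0$ while $\tau^2\ne0$ in $\H^2_G(\pt,\Z/2)\cong\Z/2$. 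Concretely, the rank-$2$ trivial Real bundle $E=\C_{\pt}^{\,2}$ has $c_1(E)=0$ but $w_2(E_\R)=\tau^2\ne0$, so the left square fails to commute on the nose, and since $\tilde\beta$ is an isomorphism here, even the induced map $\bar w_2$ is not well defined by this route. You have thus located a real gap in the reduction step; the paper's proof does not close it, and neither does yours as written.
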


\begin{proof}
As pointed out by Atiyah (in the proof of Theorem 2.5 in \cite{Atiyah}),
there is a Splitting Principle for Real vector bundles.
As a consequence, 
it is enough to consider the case of a Real line bundle $L$.
Consider the diagram of equivariant sheaves of groups,
whose second row is \eqref{eq:O2}, 
and where $\exp(t)=e^{2\pi it}$:
\begin{align*}\xymatrix@R=1.5em{
1\ar[r]& \Z(1)\ar[r]\ar[d] &\R(1)\ar[r]^{\exp}\ar[d]& S^1 \ar[r]\ar[d]^{j} &1 \\
1\ar[r]& \Z/2  \ar[r]^{(\pm 1,1)} & O_2\ar[r]^{q}    & O_2 \ar[r] &1.
}\end{align*}
The right vertical map $j$ is the standard inclusion, and the middle vertical
is the map $t\mapsto\exp(t/2)$. The Real line bundle $L$ determines 
an element $[L]$ in $\H^1_G(X,\C^\times)\cong \H^1_G(X,S^1)$,
and the Chern class $c_1(L)$ in $\H^2_G(X,\Z(1))$ is the coboundary 
$\partial([L])$; see \cite[Prop.\,1]{Kahn}.
Now $j[L]\in \H^1_G(X,O_2)$ is the class of the underlying $\R$-linear
$G$-bundle $\xi$. 
By naturality and Lemma \ref{w_2} applied to $Y=X\times_GEG$, 
the reduction modulo~2 of $c_1(L)$ is $\partial(j[L])=w_2(\xi)$. 
\end{proof}

\begin{subremark}
Proposition \ref{WR-w2} is analogous to the classical fact \cite[14-B]{MS}
that for a complex vector bundle $E$, the reduction modulo~2 of
the usual Chern class $c_1(E)$ is $w_2(E)$.
When $X=X^G$, the proof of \ref{WR-w2} 
is due to B.\,Kahn \cite[Thm.\,4]{Kahn}.
\end{subremark}

\goodbreak

Recall from Remark \ref{RGB-RBr} that 
forgetting the grading yields a function $u:\RGB(X)\to\RBr(X)$.

\begin{lemma}\label{u=w_2}
The ungraded Clifford algebra map 
\[
WR(X)\map{C} \RGB(X) \map{u} \RBr(X) \cong {_\tors}\H_G^3(X,\Z(1))
\]
is $\bar{w}_2 = \tilde\beta\;w_2$.
Thus  the image of $u\!\circ\!\Cl$ in $\RBr(X)$ has exponent~2.
\end{lemma}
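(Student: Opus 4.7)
The plan is to verify the pointwise identity $u\circ\Cl = \bar{w}_2$ on $WR(X)$ by first reducing to even-rank forms and then invoking Atiyah's splitting principle for Real vector bundles, followed by a direct match in the rank-2 case via the coboundary description of $\RBr(X)$ from Lemma \ref{rho injects}. Both sides are natural in $X$, and $\bar{w}_2$ is a group homomorphism by Proposition \ref{WR-w2}.

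First I would handle rank-1 forms. The Clifford algebra $\Cl(L,q) = \C_X \oplus L$ of a rank-1 Real form has odd parity of rank~2, so $u\Cl(L,q) = \Cl(L,q)_0 = \C_X$ is trivial in $\RBr(X)$. On the other hand, by Proposition \ref{WR-w2}, $\bar{w}_2([L,q]) = \tilde\beta\, w_2(hL) = \tilde\beta(c_1(L)\bmod 2)$, where $hL$ is the underlying real rank-2 bundle; this vanishes because the integral Bockstein annihilates the mod-2 reduction of an integral class. Hence both sides vanish on rank-1 forms, and it suffices to verify the identity on the even-rank ideal $I(X) \subset WR(X)$, on which $\Cl(q)$ has even parity so that $u\Cl(q) = \Cl(q)$ as an ungraded Real Azumaya algebra.

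By Atiyah's splitting principle (as used in Proposition \ref{WR-w2}) together with additivity of $\bar{w}_2$, I may pull back to a space on which the underlying Real bundle of any given $q \in I(X)$ splits as a sum of Real line bundles; grouping these in pairs reduces the verification to a rank-2 form $(L_1 \oplus L_2, q_1 \perp q_2)$. For this rank-2 case, Lemmas \ref{rho injects} and \ref{Q/Z(1)} express the Brauer class of $\Cl(L_1 \oplus L_2, q_1 \perp q_2)$ in ${_2}\H^3_G(X,\Z(1))$ as the image of its $PU_2$-classifying cocycle under the coboundary $\partial:\H^1_G(X, PU_2) \to \H^2_G(X,\mu_2) \hookrightarrow \H^3_G(X,\Z(1))$, while Lemma \ref{w_2} expresses $w_2(h(L_1 \oplus L_2))$ as the coboundary of the $O_2$-classifying cocycle from \eqref{eq:O2}. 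The Clifford functor induces a homomorphism of equivariant sheaves of groups $O_2 \to PU_2$ fitting into a commutative diagram of central extensions whose left-hand column is the identity on $\Z/2 \cong \mu_2$. Chasing the induced diagram of coboundary maps identifies $u\Cl$ with $\tilde\beta\circ w_2 = \bar{w}_2$ on rank-2 forms.

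The main obstacle is verifying the commutativity of the diagram of equivariant extensions at the universal level, which is the equivariant refinement of the classical identification of the Clifford invariant of a rank-2 orthogonal bundle with $\tilde\beta\, w_2$. A subsidiary ingredient needed for both the additivity of $\bar{w}_2$ on $WR(X)$ and the consistency of the rank-2 reduction is the vanishing $\tilde\beta(a \cup b) = 0$ for $a, b \in \H^1_G(X,\Z/2)$, the equivariant analogue of the argument used in the proof of Proposition \ref{free:RGB}. Once the identification $u\circ\Cl = \tilde\beta\, w_2$ is in hand, the exponent-2 conclusion is immediate, since the image of $\tilde\beta$ on a mod-2 class lies in the 2-torsion subgroup ${_2}\H^3_G(X,\Z(1))$.
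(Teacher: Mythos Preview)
Your approach differs substantially from the paper's, and it has a genuine gap.

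The paper does not use the splitting principle at all. Instead, it invokes Corollary~\ref{4-skel} (the isomorphism $\H_G^3(X,\Z(1))\cong\H_G^3(X\times S^{5,0},\Z(1))$) to replace $X$ by $X\times S^{5,0}$, on which $G$ acts freely. Once the action is free, $KO_G(X)\cong KO(X/G)$ and the statement reduces to the non-equivariant identity ``$w_2(F)=u\Cl_\R(F)$'' for real bundles on $X/G$, which is exactly Lemma~7 of Donovan--Karoubi \cite{DonovanKaroubi}. The whole proof is three lines plus a citation.

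The gap in your approach is the passage from rank~$2$ to higher even rank. You write that ``additivity of $\bar w_2$'' plus the splitting principle reduces to rank~$2$, but you also need additivity of $u\circ\Cl$, and $u$ is only a \emph{set-theoretic} section of $\RBr(X)\hookrightarrow\RGB(X)$ (Remark~\ref{RGB-RBr}). The ungraded Clifford algebra is not multiplicative for $\perp$: classically one has
\[
u\Cl(q_1\!\perp\! q_2)\;\sim\; u\Cl(q_1)\otimes u\Cl(q_2)\otimes(\disc q_1,\disc q_2),
\]
and the quaternion correction term contributes $\tilde\beta\bigl(w_1(q_1)\cup w_1(q_2)\bigr)$ in $\H^3_G(X,\Z(1))$. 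You flag the vanishing of $\tilde\beta(a\cup b)$ as a ``subsidiary ingredient,'' but in the equivariant setting with $\Z(1)$-coefficients this vanishing is \emph{false}: already for $X=\pt$ and $a=b$ the generator of $\H^1_G(\pt,\Z/2)$, one computes $\tilde\beta(a^2)\ne0$ in $\H^3_G(\pt,\Z(1))\cong\Z/2$. So neither $\bar w_2$ nor $u\circ\Cl$ is additive on the nose, and your reduction collapses unless you track the correction terms on both sides and show they match. That is doable in principle, but it is precisely the work the paper's free-action trick is designed to avoid.
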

\goodbreak

\begin{proof}
Recall that $\H_G^3(X,\Z(1))\cong\H_G^3(X\times S^{5,0},\Z(1))$,
where $S^{5,0}$ is the 4-sphere with antipodal involution.
Thus we may replace $X$ by $X\times S^{5,0}$ to assume that $G$
acts freely on $X$. In this case, $KO_G(X)\cong KO(X/G)$
and we noted in Example \ref{fixed:RGB} and Theorem \ref{RBr},
\[
\H_G^3(X/G,\Z(1))\cong H^0(X/G,\Z/2)\oplus H^2(X/G,\Z/2).
\]

Suppose then that $G$ acts freely on $X$. 
As noted in Remark \ref{GR=KOG}, $WR(X)$ is a quotient of 
$KO_G(X)\cong KO(X/G)$. If $F$ is an $\R$-linear vector bundle
on $X/G$, Donovan and Karoubi proved in \cite[Lemma 7]{DonovanKaroubi}
that $w_2(F)\in H^2(X/G,\Z/2)\cong\H_G^2(X,\Z/2)$ 
coincides with the class of the ungraded $\R$-linear Clifford algebra
$u(\Cl_{\R}(F))$ on $X/G$ or, equivalently, with the class of
$u(\Cl_{\R}(F)\oo\C)=u\Cl_{\C}(E,q)$ on $X$. 

The isomorphism $KO_G(X)\cong GR(X)$ of Remark \ref{GR=KOG} 
sends the class of $F$ to the class of the Real vector bundle 
$E=F\oo_{\R}\C$ on $X$ with form $q$. From Proposition \ref{WR-w2}
we see that $\bar{w}_2(E,q)=\tilde\beta\;w_2(F)$.
Since $w_2(F)=u\Cl_{\R}(F)$, we are done.
\end{proof}

\smallskip\goodbreak

\begin{lemma}\label{Hasse=w2}
The algebraic Hasse invariant on a smooth variety $V$ 
is compatible with the equivariant $\bar{w}_2$ on $X=V_\top$ 
in the sense that the following diagram commutes:
\[
\xymatrix@R=2.0em{ 
K(V) \ar[r]^h\ar[d] & GW(V) \ar[r]^-{\mathrm{Hasse}} \ar[d] & 
{_2}\Br(V) \ar[d]^{\cong}\ar[r]^-{\cong} & 
H^0(V,\cH^2) \ar[d]_{\mathrm{onto}} \\
KR(X)\!\ar[r]^h & GR(X) \ar[r]^-{w_2}    
&     \H^2_G(X,\Z/2)/(\Pic V/2) \ar[r]^-{\mathrm{onto}}
& {_2}\H^3_G(X,\Z(1)).\kern-5pt
}\]
\end{lemma}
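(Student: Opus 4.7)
The strategy is to verify commutativity square by square, reducing the left square to naturality of the hyperbolic map, the middle square to Lemma \ref{u=w_2} plus the compatibility of $\BW(V)\to\RGB(V_\top)$ with Clifford algebras, and the right square to naturality of the Kummer--plus--Cox identifications and the Bockstein.

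\textbf{Left square.} The vertical map $K(V)\to KR(X)$ sends $[\cE]$ to the class of the underlying Real vector bundle $E$, and $GW(V)\to GR(X)$ sends $(\cE,q)\mapsto (E,q)$. Both the algebraic and topological hyperbolic constructions are given by $\cE\mapsto (\cE\oplus\cE^*,\,\mathrm{ev})$ fiberwise, so the two compositions sending $[\cE]$ into $GR(X)$ agree. This is pure naturality.

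\textbf{Middle square.} The algebraic Hasse invariant of a symmetric form $(\cE,q)$ is, by Remark \ref{BW-Br}, the class of the Clifford algebra $[\Cl(q)]\in\BW(V)$ followed by the set-theoretic map $u:\BW(V)\to\Br(V)$ that forgets the grading; this image in fact lands in ${}_2\Br(V)$. The natural map $\BW(V)\to\RGB(V_\top)$ of Example \ref{Br-RBr} is defined pointwise and therefore commutes with the Clifford algebra construction, and with the parity/forget-grading map $u$. Thus the composition
\[
GW(V)\xrightarrow{\Cl}\BW(V)\xrightarrow{u}{_2}\Br(V)
\]
is intertwined with $GR(X)\xrightarrow{\Cl}\RGB(X)\xrightarrow{u}\RBr(X)$. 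By Lemma \ref{u=w_2}, the latter composition, viewed as landing in ${}_\tors\H^3_G(X,\Z(1))\cong\RBr(X)$, equals $\bar{w}_2=\tilde\beta\,w_2$. Finally, we must check that the isomorphism ${}_2\Br(V)\cong\H^2_G(X,\Z/2)/(\Pic V/2)$ of the middle vertical is precisely what identifies $u\circ\Cl$ (on the algebraic side) with $w_2$ (on the topological side); this follows because by the Kummer sequence and Cox's theorem the map $\Pic(V)\to\H^2_G(X,\Z(1))$ reduces mod~2 to the image of $\Pic(V)/2$ in $\H^2_G(X,\Z/2)$, and by Proposition \ref{WR-w2} this image is killed by $\tilde\beta$. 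So the middle square commutes.

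\textbf{Right square.} The iso ${}_2\Br(V)\cong H^0(V,\cH^2)$ is Bloch--Ogus for $\Z/2$-coefficients, and the surjection $H^0(V,\cH^2)\twoheadrightarrow{}_2\H^3_G(X,\Z(1))$ is obtained by composing Cox's theorem (identifying $H^2_\et(V,\Z/2)$ with $\H^2_G(X,\Z/2)$) with the Bockstein $\tilde\beta:\H^2_G(X,\Z/2)\to{}_2\H^3_G(X,\Z(1))$. Commutativity is then the naturality of Bloch--Ogus and of $\tilde\beta$ under the comparison map from \'etale cohomology to equivariant cohomology.

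\textbf{Main obstacle.} The heart of the argument is the middle square: recognizing the algebraic Hasse invariant as $u\circ\Cl$ in the Brauer--Wall picture, verifying that $\BW(V)\to\RGB(V_\top)$ respects both $\Cl$ and $u$, and then importing Lemma \ref{u=w_2} to convert the topological $u\circ\Cl$ into $\tilde\beta\,w_2$. The first two points are straightforward from the pointwise description, but must be explicitly invoked; the third is exactly what Lemma \ref{u=w_2} supplies.
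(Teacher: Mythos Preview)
Your plan for the left and right squares is fine and matches the paper. The problem is the middle square, where your argument has two real gaps.

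First, the identification ``Hasse $=u\circ\Cl$'' is not justified by Remark~\ref{BW-Br} (which only describes the section $u$) and is not correct in general: the Clifford invariant $u(\Cl(q))$ and the classical Hasse invariant differ by a correction term involving the discriminant and $(-1)$. The paper uses ``Hasse invariant'' in the classical sense, namely $\prod_{i<j}\disc(E_i,\theta_i)\cup\disc(E_j,\theta_j)$ after diagonalization, so you cannot simply substitute the Clifford invariant.

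Second, even granting that identification, Lemma~\ref{u=w_2} only computes the composite $GR(X)\to\RBr(X)\cong{_\tors}\H_G^3(X,\Z(1))$ as $\tilde\beta\,w_2$; it says nothing about $w_2$ itself with values in $\H_G^2(X,\Z/2)/(\Pic V/2)$. Since the right-hand vertical map $H^0(V,\cH^2)\to{_2}\H_G^3(X,\Z(1))$ is only surjective, not injective, commutativity of the outer rectangle would not imply commutativity of the middle square. Your closing sentence (``the isomorphism \ldots is precisely what identifies $u\circ\Cl$ with $w_2$'') is what needs to be proved, not a consequence of Proposition~\ref{WR-w2}.

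The paper takes a different and more elementary route for the middle square: it restricts to a dense open $U$ (legitimate because $H^0(V,\cH^2)\hookrightarrow H^0(U,\cH^2)$ by Bloch--Ogus), diagonalizes $(E,\theta)$ there as a Whitney sum of rank~1 forms $(E_i,\theta_i)$, and then compares the two sides directly via their cup-product formulas
\[
\mathrm{Hasse}(E,\theta)=\prod_{i<j}\disc(E_i,\theta_i)\cup\disc(E_j,\theta_j),
\qquad
w_2(E,\theta)=\prod_{i<j}w_1(E_i,\theta_i)\cup w_1(E_j,\theta_j),
\]
invoking Theorem~\ref{w1:factors} to match the $w_1$'s with the algebraic discriminants. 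This bypasses the Clifford/Brauer--Wall machinery entirely.
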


\begin{proof}
We showed in \cite[1.2]{KSW} that the left square commutes as 
a special case of a general result about Hermitian categories.
The lower right map is well defined because the map
$\Pic(V)\to \H^2_G(X,\Z/2)$  factors through the Chern class 
$c_1:KR(X)\to \H^2_G(X,\Z(1))$ of Remark \ref{Chern}, 
and there is a short exact sequence:
\[
0\to \H^2_G(X,\Z(1))/2 \to \H^2_G(X,\Z/2)\to{_2}\H^3_G(X,\Z(1))\to0.
\]
Commutativity of the right square is immediate from this.
Thus it suffices to show that the middle square commutes.

Let $\theta$ be a symmetric form on an algebraic vector bundle $E$.
As in the proof of \ref{w1:factors}, 
we may restrict $V$ to any dense open subvariety $U$, because
$H^0(V,\cH^2)\to H^0(U,\cH^2)$ is an injection;
both are subgroups of $H^2_\et(F,\Z/2)$ by Bloch--Ogus \cite{BlochOgus}.
Over the function field $F$, $(E,\theta)$ is isomorphic to a sum of 
1-dimensional forms. Such an isomorphism is defined over a 
dense open $U$. Replacing $V$ by such a $U$, we may assume that
$(E,\theta)$ is a Whitney sum of rank~1 forms $(E_i,\theta_i)$.
By construction, 
the algebraic Hasse invariant of $(E,\theta)$ is
the class of $\prod_{i<j}\disc(E_i,\theta_i)\cup \disc(E_j,\theta_j)$.
By Theorem \ref{w1:factors}, this equals
$w_2(E,\theta) = \prod_{i<j}w_1(E_i,\theta_i)\cup w_1(E_j,\theta_j)$.
\end{proof}


It is not generally the case that every element of $H^2(X,\Z/2)$
is the second Stiefel--Whitney class of a vector bundle
(see Example \ref{H^5(X,Z)} below).
The following criterion was communicated to us by J.\ Lannes.
Let $\beta$ denote the integral Bockstein $K(\Z/2,4)\to K(\Z,5)$,
representing the cohomology operation $H^4(-,\Z/2)\to H^5(-,\Z)$.

\goodbreak
\begin{theorem}[Lannes]\label{u=w_2(E)}
Let $u$ be a cohomology class in $H^2(X,\Z/2)$.
A necessary condition for $u$ to be $w_2(E)$
for an $\R$-linear vector bundle $E$ on $X$ is that $\beta(u^2)=0$.

If $\dim(X)\!\le\!7$ and $\beta(u^2)\!=\!0$ in $H^5(X,\Z)$,
then there is a vector bundle $E$ such that $u=w_2(E)$.
This is always the case if $\dim(X)\le4$.
\end{theorem}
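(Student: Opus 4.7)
The plan is to split the proof into the necessity of $\beta(u^2) = 0$ and the sufficiency under the dimension hypothesis.

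For necessity, I would exhibit an explicit integral lift of $w_2(E)^2$. The key identity is that for any real bundle $E$, the complexification $E \otimes_\R \C$, viewed as a real bundle, is $E \oplus E$; by the Whitney product formula $w(E \otimes_\R \C) = w(E)^2$, and Frobenius mod~2 gives $w_4(E \otimes_\R \C) = w_2(E)^2$. On the other hand, viewing $E \otimes_\R \C$ as a complex bundle, its second Chern class $c_2(E \otimes \C) \in H^4(X, \Z)$ reduces mod~2 to $w_4(E \otimes_\R \C)$. Hence $w_2(E)^2$ admits the integral lift $-p_1(E) = c_2(E \otimes \C)$, and so $\beta(w_2(E)^2) = 0$.

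For sufficiency, since $\dim X \le 7$ we may work stably and regard the task as constructing a map $X \to BO$ with prescribed $w_2$; allowing a line bundle summand reduces us to $BSO$ (with $w_1 = 0$). The strategy is obstruction theory for lifting $u: X \to K(\Z/2, 2)$ through the Postnikov tower of $BSO$, whose first nontrivial stage is $BSO \to K(\Z/2, 2)$ classifying $w_2$ (since $\pi_2 BSO = \Z/2$ and $\pi_3 BSO = 0$). The next Postnikov stage is a principal $K(\Z, 4)$-fibration $BSO[4] \to K(\Z/2, 2)$ (from $\pi_4 BSO = \Z$), with $k$-invariant $k_4 \in H^5(K(\Z/2, 2), \Z)$. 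Subsequent obstructions vanish in the given dimension range, because $\pi_5 BSO = \pi_6 BSO = \pi_7 BSO = 0$ and the next non-trivial $\pi_8 BSO = \Z$ contributes only to $H^9(X, \Z) = 0$. Hence the entire obstruction reduces to $u^* k_4 \in H^5(X, \Z)$, which automatically vanishes when $\dim X \le 4$.

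The main obstacle is identifying $k_4 = \tilde\beta(\iota_2^2)$, so that $u^* k_4 = \beta(u^2)$. My plan for this step is: first compute $H^4(K(\Z/2, 2), \Z) = 0$ via the path-loop Serre spectral sequence for $K(\Z/2, 1) \to * \to K(\Z/2, 2)$, which forces the integral Bockstein to send $\iota_2^2$ to a nontrivial class in $H^5(K(\Z/2, 2), \Z)$; next, use that on $BSO$ itself $w_2^2 \equiv p_1 \pmod 2$ admits an integral lift, so $\tilde\beta(\iota_2^2)$ pulls back to~$0$ in $H^5(BSO[4], \Z)$; finally, read off from the Serre spectral sequence of $K(\Z, 4) \to BSO[4] \to K(\Z/2, 2)$ that this kernel is the cyclic subgroup generated by $k_4$, forcing $k_4 = \tilde\beta(\iota_2^2)$ up to sign. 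With this identification, $u^* k_4 = \beta(u^2) = 0$ produces the Postnikov lift and hence the desired bundle $E$ with $w_2(E) = u$.
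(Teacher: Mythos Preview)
Your proposal is correct and follows essentially the same strategy as the paper, though the packaging differs. Both proofs hinge on the fact that the $7$th Postnikov section of $BSO$ is the two-stage tower with $\pi_2=\Z/2$, $\pi_4=\Z$, and $k$-invariant $\tilde\beta(\iota_2^2)\in H^5(K(\Z/2,2),\Z)$, together with the vanishing of $\pi_5,\pi_6,\pi_7$ of $BSO$.

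The paper avoids your explicit identification of the $k$-invariant by a slick trick: it \emph{defines} a space $P$ as the homotopy pullback of $\Sq^2:K(\Z/2,2)\to K(\Z/2,4)$ along the mod~$2$ reduction $K(\Z,4)\to K(\Z/2,4)$. By construction, a map $X\to K(\Z/2,2)$ lifts to $P$ precisely when $\beta(u^2)=0$, so no computation of $H^5(K(\Z/2,2),\Z)$ is needed. The compatibility $p_1\equiv w_2^2\pmod 2$ (which you also use for necessity) then gives a map $s:BSO\to P$, and the remaining claim is that $s$ is $7$-connected. This follows by comparing fibers over $K(\Z/2,2)$: the fiber of $BSO\to K(\Z/2,2)$ is $BSpin$, the fiber of $P\to K(\Z/2,2)$ is $K(\Z,4)$, and the induced map $BSpin\to K(\Z,4)$ is $\frac{p_1}{2}$, which is $7$-connected by Bott periodicity. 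Your route via the Serre spectral sequence of $K(\Z,4)\to BSO[4]\to K(\Z/2,2)$ reaches the same conclusion but requires more bookkeeping; the pullback formulation is what buys the paper its brevity.
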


\begin{proof}
Let $P$ denote the homotopy pullback of the Steenrod square
$\Sq^2\!: K(\Z/2,2)\to K(\Z/2,4)$ along the reduction
$K(\Z,4)\to\! K(\Z/2,4)$. The Stiefel--Whitney map $w_2:BSO\to K(\Z/2,2)$
and the Pontrjagin class $p_1:BSO\to K(\Z,4)$ are compatible 
(see \cite[15-A]{MS}), so (up to homotopy) 
they factor through a map $s:BSO\to P$.  
Thus we have the diagram:
%
\begin{align*}\xymatrix@R=1.5em{
BSO\ar[r]^{s}& P\ar[r]^-{}\ar[d]^{p_1} &K(\Z/2,2)\ar[d]^{\Sq^2} \\
& K(\Z,4)\ar[r]& K(\Z/2,4) \ar[r]^{\beta} & K(\Z,5).
}\end{align*}
As the composition $BSO\to K(\Z,5)$ is null-homotopic, 
a necessary condition
for $u\in H^2(X,\Z/2)$ to be $w_2(E)$ is that $\beta(u^2)=0$.

Now suppose that $\beta(u^2)=0$. Since the bottom sequence is a
homotopy fibration sequence,
the map $u^2:X\to K(\Z/2,4)$ lifts to a map $X\to K(\Z,4)$
and hence to a map $X\to P$. If $\dim(X)\le7$, 
this lifts to a map $f:X\to BSO$, because
the map $s$ is 7-connected.
The classifying map $f$ determines a stable vector bundle $E$ on $X$
with $w_2(E)=u$.
\end{proof}


\begin{subremark}\label{dim=3}
If $\dim(X)\le3$ then $w_2$ maps the kernel of 
$(\rank,\det):KO(X)\to H^0(X,\Z)\oplus H^1(X,\Z/2)$ isomorphically
onto $H^2(X,\Z/2)$. This follows from the proof of Theorem \ref{u=w_2(E)}
but it is also a consequence of the Atiyah--Hirzebruch spectral sequence.
\end{subremark}

\begin{example}\label{H^5(X,Z)}
Cartan proved in \cite{Cartan} that $\beta\,\Sq^2(u)\!=\!\beta(u^2)$ 
is a nonzero cohomology operation $H^2(X,\Z/2)\to H^5(X,\Z)$. 
Equivalently, the universal class
$u\in H^2(K(\Z/2,2),\Z/2)$ has $\beta(u^2)\ne0$.
This is easily proven, using universal coefficients and the 
well known cohomology groups $H^*(K(\Z/2,2),\Z/2)$.

If $X$ is the 6-skeleton of $K(\Z/2,2)$, the restriction 
$\bar{u}\in H^2(X,\Z/2)$ of $u$ cannot be $w_2(E)$ for any 
$\R$-linear vector bundle $E$ on $X$, as $\beta(\bar{u}^2)\ne0$.
\end{example}


\begin{theorem}\label{WR-RGB}
If $\dim(X)\le4$, and $G$ acts freely on $X$,
the image of $WR(X)\smap{\Cl} \RGB(X)$ 
fits into an extension:
\[ 0 \to  {_2}\!\RBr(X) \to  \Cl(WR(X)) \map{\hatZ} \Q_2(X) \to 0.  \]
\end{theorem}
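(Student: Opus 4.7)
The approach is to restrict the exact sequence of Theorem \ref{RBW},
\[
0 \to \RBr(X) \to \RGB(X) \map{\hatZ} \Q_2(X) \to 0,
\]
to the subgroup $\Cl(WR(X)) \subset \RGB(X)$. Surjectivity of $\hatZ : \Cl(WR(X)) \to \Q_2(X)$ follows at once from Lemma \ref{WR/I2}, so the theorem reduces to identifying the kernel $\Cl(WR(X)) \cap \RBr(X)$ with ${_2}\RBr(X)$.

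The easier inclusion $\Cl(WR(X)) \cap \RBr(X) \subseteq {_2}\RBr(X)$ is immediate: if $\Cl(q) \in \RBr(X) \subset \RGB(X)$, then $\Cl(q)$ is even, so $u\Cl(q) = \Cl(q)$ in $\RBr(X)$, and by Lemma \ref{u=w_2} this equals $\bar{w}_2(q)$, which by Proposition \ref{WR-w2} lives in ${_2}\H_G^3(X,\Z(1)) \cong {_2}\RBr(X)$ under the identification of Theorem \ref{RBr}.

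For the reverse inclusion, fix $\alpha \in {_2}\RBr(X) \cong {_2}\H_G^3(X,\Z(1))$. The integer Bockstein $\tilde\beta : \H_G^2(X,\Z/2) \to {_2}\H_G^3(X,\Z(1))$ is surjective, so I choose $v \in \H_G^2(X,\Z/2)$ with $\tilde\beta(v) = \alpha$. Since $G$ acts freely, $\H_G^2(X,\Z/2) \cong H^2(X/G,\Z/2)$ and $KO_G(X) \cong KO(X/G)$, with $\dim(X/G) \le 4$; Lannes' Theorem \ref{u=w_2(E)} then produces an $\R$-linear bundle $E$ on $X/G$ with $w_2(E) = v$. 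Next I modify $E$ so that the resulting $q \in WR(X)$ satisfies $\hatZ\Cl(q) = Q^{(+)}$; by Lemmas \ref{WR/I2} and \ref{w1 on I}, this amounts to arranging even rank and $w_1 = 0$. If $\rank E$ is odd, replace $E$ by $E \oplus \underline{1}$, which changes neither $w_1$ nor $w_2$. Setting $\xi = w_1(E)$ and choosing an $\R$-linear line bundle $L$ with $w_1(L) = \xi$, let $E' = E - L + \underline{1}$ in $KO_G(X)$. The Whitney formula
\[
w(E') \;=\; w(E)\cdot (1+\xi)^{-1} \;=\; w(E)(1 + \xi + \xi^2 + \cdots)
\]
gives $w_1(E') = 0$ and $w_2(E') = w_2(E)$ in $H^2(X/G,\Z/2)$, while $\rank(E')$ is unchanged and even.

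Then $q = [E'] \in WR(X)$ has $\bar{w}_2(q) = \tilde\beta w_2(E') = \alpha$ by the diagram of Proposition \ref{WR-w2}, and the criterion above ensures $\Cl(q) \in \RBr(X) \subset \RGB(X)$, so $u\Cl(q) = \Cl(q)$ in $\RBr(X)$; Lemma \ref{u=w_2} then yields $\Cl(q) = \bar{w}_2(q) = \alpha$. This proves $\alpha \in \Cl(WR(X))$, completing the identification of the kernel. The main obstacle is the surjectivity of $\bar{w}_2 : WR(X) \to {_2}\RBr(X)$: the Bockstein half is formal, but realizing every class in $\H_G^2(X,\Z/2)$ as $w_2(E)$ for some $\R$-linear bundle is the content of Lannes' Theorem \ref{u=w_2(E)}, which is exactly what forces the hypothesis $\dim X \le 4$; the free-$G$-action hypothesis is what lets the equivariant realizability problem reduce to the ordinary Lannes theorem on the quotient $X/G$.
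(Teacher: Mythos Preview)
Your proof is correct and follows essentially the same route as the paper's: both restrict the exact sequence of Theorem~\ref{RBW} to $\Cl(WR(X))$, invoke Lemma~\ref{WR/I2} for surjectivity onto $\Q_2(X)$, identify the left vertical map with $\bar{w}_2$ via Lemma~\ref{u=w_2}, and use Lannes' Theorem~\ref{u=w_2(E)} together with Proposition~\ref{WR-w2} to show $\bar{w}_2$ surjects onto ${_2}\RBr(X)$. The paper compresses your modification step (adjusting $E$ to have even rank and $w_1=0$ so that the class lands in $I_2(X)$) into the single sentence ``$w_2$ induces a surjection $\bar{w}_2:I_2(X)\to{_2}\RBr(X)$''; your explicit Whitney-sum calculation with $E'=E-L+\underline{1}$ is exactly what is needed to justify that sentence.
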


\begin{proof}
Recall from Lemma \ref{WR/I2} that 
$WR(X)\smap{\Cl} \RGB(X)\smap{\hatZ}\Q_2(X)$ is onto.
Let us write $I_2(X)$ for the kernel of this map.
By Lemma \ref{u=w_2}, 
we have a commmutative diagram with exact rows:
\begin{align*}\xymatrix@R=1.5em{
0\ar[r]& I_2(X)\ar[r]\ar[d]^{\bar{w}_2}& WR(X)\ar[r]^-{}\ar[d]^{\Cl}
    &\Q_2(X)\ar[d]^{=}\ar[r] & 0 \\
0\ar[r]& \RBr(X)\ar[r]& \RGB(X) \ar[r]^{} & \Q_2(X)\ar[r]& 0.
}\end{align*}
By Theorem \ref{u=w_2(E)}, 
$KO_G(X)\cong KO(Y)\smap{w_2} H^2(Y,\Z/2)\cong\H_G^2(X,\Z/2)$ is onto,
where $Y=X/G$.
By Proposition \ref{WR-w2}, $w_2$ induces a surjection 
$\bar{w}_2:I_2(X)\to  {_2}\H_G^3(X,\Z(1))\cong {_2}\!\RBr(X)$.
\end{proof}

\begin{subremark}
We do not think the assumption that $G$ acts freely on $X$ is
necessary in Theorem \ref{WR-RGB}.  For example,
when $G$ acts trivially on $X$ 
then $KO_G(X)\cong KO(X)\oo R(G)$ and 
$w_2:KO_G(X)\to KO(X)\to H^2(X,\Z/2)$ is onto by Theorem \ref{u=w_2(E)}.
In addition, the map $\tilde\beta:H^2(X,\Z/2)\to {_2}\H_G^3$ is a surjection.
Thus the proof of \ref{WR-RGB} goes through, using Example \ref{fixed:RGB}.
\end{subremark}

\section{Brauer--Wall versus $\RGB(X)$}\label{sec:BW-RGB}

In this section, we compare the groups $\BW(V)$ and $\RGB(V_\top)$
for smooth varieties defined over $\R$.

First, suppose that $V$ is a complex variety, with 
underlying topological space $V(\C)$.
In this case, the topological space $V_\top$ with involution
associated to $V$ is $G\times V(\C)$, because
$V\otimes_{\R}\C$ is two copies of $V$. 
From Proposition \ref{free:RGB} 
we see that
\begin{equation}\label{eq:RGB(V)}
\RGB(V_\top) \cong H^0(V(\C),\Z/2)
\oplus H^1(V(\C),\Z/2)\oplus{_\tors}H^3(V(\C),\Z).
\end{equation}
This calculation of $\RGB(V_\top)$ also follows from 
Theorems \ref{RBW} and \ref{RBr}.

If $V$ is smooth and projective over $\C$, $H^{p+q}(V(\C),\C)$
has a Hodge decomposition as the sum of $H^p(X,\Omega^q)$, 
and we write $h^{q,p}$ for the dimension of $H^p(X,\Omega^q)$.
In particular $h^{0,2}$ is the dimension of 
$H^2(V,\cO_V)\cong H^2_\an(V,\cO_\an).$
Abelian varieties of dimension $n$ have $h^{0,2}=\binom{n}2$, while
projective spaces and ruled surfaces have $h^{0,2}=0$. 

Let $\rho$ denote the rank of the natural map
$H^2_\an(V,\Z(1))\!\to\! H^2_\an(V,\cO_\an)$.

\begin{theorem}\label{h02=0}
Suppose that $V$ is a smooth complex projective variety. Then 
\begin{enumerate}
\item $Br(V)\cong (\bQ/\Z)^\rho \oplus\ {_\tors}H^3(V(\C),\Z)$ 
\item there is a split exact sequence
\[
0\to (\bQ/\Z)^{\rho} \to \BW(V)\to \RGB(V_\top) \to 0,
\]
where $\rho\ge2\,h^{0,2}$, and $\rho=0$ when $h^{0,2}=0$.
\item $\BW(V)\to\RGB(V_\top)$ 
is an isomorphism if and only if $h^{0,2}=0$.
\end{enumerate}
\end{theorem}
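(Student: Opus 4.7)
The plan is to derive part~(1) from the analytic exponential sequence on $V(\C)$, use Lemma~\ref{Cox-H1} to reduce part~(2) to computing the kernel and cokernel of $\Br(V)\to\RBr(V_\top)$, and obtain part~(3) as an immediate consequence.

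For part~(1), I would combine the identification $\Br(V)\cong{}_\tors H^2(V,\cO_\an^\times)$ (Gabber plus analytic/\'etale comparison for torsion coefficients) with the long exact sequence of $0\to\Z(1)\to\cO_\an\to\cO_\an^\times\to 0$, which yields
\[
0\to A\to H^2(V,\cO_\an^\times)\to\ker\!\bigl(H^3(V,\Z)\to H^3(V,\cO)\bigr)\to 0,
\]
where $A=H^2(V,\cO)/\mathrm{image}(H^2(V,\Z))$. Since $A$ is a quotient of the $\C$-vector space $H^2(V,\cO)$, it is divisible, hence injective in the category of abelian groups, so the sequence splits. Writing $A=W/L$ with $W=H^2(V,\cO)$ of real dimension $2h^{0,2}$ and $L$ finitely generated of rank $\rho$, an element $w+L$ is torsion iff $w\in\bQ L$, so ${}_\tors A\cong\bQ L/L\cong(\bQ/\Z)^\rho$. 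Torsion in $H^3(V,\Z)$ maps to zero in the $\C$-vector space $H^3(V,\cO)$, so taking torsion in the split sequence gives the stated decomposition of $\Br(V)$.

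For part~(2), Lemma~\ref{Cox-H1} reduces the task to computing the kernel and cokernel of $\Br(V)\to\RBr(V_\top)$. In the complex case $V_\top\cong G\times V(\C)$, and \eqref{eq:RGB(V)} (equivalently, Theorem~\ref{RBr}, via the computation $\H^*_G(G\times Y,\Z(1))\cong H^*(Y,\Z)$) identifies $\RBr(V_\top)$ with the finite group ${}_\tors H^3(V(\C),\Z)$. Any homomorphism from the divisible group $(\bQ/\Z)^\rho$ into a finite group is trivial, so the $(\bQ/\Z)^\rho$ summand of $\Br(V)$ from part~(1) automatically lies in the kernel; the induced map on the quotient is then a homomorphism between two copies of ${}_\tors H^3(V(\C),\Z)$. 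I would show this map is the identity via a commutative ladder comparing the analytic exponential sequence with the equivariant sequence $0\to\Z(1)\to\cO_\R\to U_1\to 0$ of Lemma~\ref{Q/Z(1)}; softness of $\cO_\R$ is precisely what eliminates the divisible $A$-summand on the topological side. The resulting exact sequence $0\to(\bQ/\Z)^\rho\to\BW(V)\to\RGB(V_\top)\to 0$ splits because $(\bQ/\Z)^\rho$ is divisible, hence injective.

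Part~(3) is then immediate: the surjection is an isomorphism iff the kernel $(\bQ/\Z)^\rho$ vanishes. The Hodge projection $H^2(V,\R)\to H^{0,2}\cong\R^{2h^{0,2}}$ is surjective, since its kernel $H^2(V,\R)\cap H^{1,1}$ has real dimension $h^{1,1}$; hence the image of $H^2(V,\Z)$ in $H^{0,2}$ spans over $\R$, giving $\rho\geq 2h^{0,2}$, while trivially $\rho=0$ when $H^{0,2}=0$. Thus $\rho=0$ is equivalent to $h^{0,2}=0$. The main obstacle I anticipate is the naturality step in part~(2): verifying that the induced map on ${}_\tors H^3(V,\Z)$ is the identity requires building a commutative ladder between the analytic exponential sequence on $V(\C)$ and the equivariant continuous-function sequence on $V_\top$, and matching the algebraic first Chern class implicit in the Bockstein $\Br(V)\to H^3(V,\Z)$ with the equivariant $c_1$ of Remark~\ref{Chern}.
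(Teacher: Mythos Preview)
Your proposal is correct and follows essentially the same route as the paper: the exponential sequence plus divisibility/injectivity of the cokernel gives part~(1), Lemma~\ref{Cox-H1} together with $\RBr(V_\top)\cong{}_\tors H^3(V(\C),\Z)$ reduces part~(2) to part~(1), and part~(3) follows. Your bound $\rho\ge 2h^{0,2}$ via surjectivity of the Hodge projection $H^2(V,\R)\to H^{0,2}$ is a slight repackaging of the paper's argument, which instead bounds $\rank NS(V)\le h^{1,1}$ and subtracts; and you are more explicit than the paper about the naturality step identifying the induced map on ${}_\tors H^3$ with the identity, which the paper leaves implicit.
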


\begin{proof}
Recall \cite{deJong} that the Brauer group
is the torsion subgroup of $H^2(V,\Gm)$,
and that $H^2_\an(V,\cO_\an)$ and $H^3_\an(V,\cO_\an)$ are torsionfree.
Consider the exponential sequence, 
in which we have identified $\Z$ and $\Z(1)$:
\[ H^2_\an(V,\Z) \smap{\eta} H^2_\an(V,\cO_\an) \to
H^2(V,\Gm) \to H^3_\an(V,\Z)\to H^3_\an(V,\cO_\an).
\]
If $h^{0,2}=0$, the second term is zero, so the torsion subgroups of
$H^2(V,\Gm)$ and $H_\an^3(V,\Z)\cong H^3(V(\C),\Z)$ are isomorphic.

Now suppose that $h^{0,2}(V)>0$, and write $D$ for the cokernel of $\eta$;
since $D$ is divisible, it is a summand of $H^2(V,\Gm)$. 
Thus 
\[
\Br(V) \cong D \oplus\ {_\tors}H^3(V(\C),\Z).
\]
By Lemma \ref{Cox-H1}, it remains to determine $D$.
The kernel of the map $\eta$ in the displayed exponential sequence is 
the image of $\Pic(V)=H^1(V,\Gm)$ in $H^2_\an(V,\Z)$; this is the
N\'eron-Severi group $NS(V)$; see \cite[p.\,447]{Hart}.
Since $H^2_\an(V,\cO_\an)$ is torsionfree, $NS(V)$ contains
the torsion subgroup of $H_\an^2(V,\Z)$; let $N$ denote $NS(V)$ modulo
torsion. 

By universal coefficients, $H_\an^2(V,\C)\cong H_\an^2(V,\Z)\otimes\C$,
so $H_\an^2(V,\Z)$ has rank $h^{1,1}+2h^{0,2}$.  
Since the  rank of $N$ is $\le h^{1,1}$\!, the image 
$\overline{H}^{\, 2}_\an = H_\an^2(V,\Z)/N$
of $H_\an^2(V,\Z)$ in $H_\an^2(V,\cO_\an)$ is a free abelian group of
rank $\rho$, $\rho\ge 2h^{0,2}>0$.
Since $\overline{H}^{\, 2}_\an$ 
is the image of $\eta$, the torsion subgroup of
$D$ is the nonzero group $(\bQ/\Z)^\rho$, as required.
\end{proof}


Next, we suppose that $V$ is geometrically connected over $\R$, i.e., that
$V\otimes_{\R}\C$ is connected.  In this case, $X=V_\top$ is connected.

Recall that $h^{0,2}(V)=\dim_\R H^2(V,\cO_V)$ equals $h^{0,2}(V\otimes_\R\C)$, 
because (by \cite{GAGA}),
$H^2(V,\cO_V)\cong H^2(V\otimes\C,\cO_{V\otimes\C})^G\cong H^2_\an(X,\cO_\an)^G$.
As observed by Krasnov in \cite[proof of 1.3]{Krasnov-Cohom},
this group is also isomorphic to $\H^2_G(X,\cO_\an)$ 
(because $\cO_\an$ is a torsionfree $G$-sheaf). 
The proof of the following result of Krasnov \cite[(0.4)]{Krasnov-Cohom} 
was communicated to us by Olivier Wittenberg. 

\goodbreak
\begin{theorem}\label{pg>0}
Let $V$ be a smooth geometrically connected projective variety 
over $\R$, with underlying $G$-space $V_\top$. Then
\[
\Br(V) \cong 
(\bQ/\Z)^{\rho_0} \oplus\ 
{_\tors}\H^3_G(V_\top,\Z(1)),
\]
where $\rho_0\ge h^{0,2}(V)$.  If $h^{0,2}=0$ then $\rho_0=0$.
\end{theorem}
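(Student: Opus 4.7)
The plan is to mirror the proof of Theorem \ref{h02=0}, replacing the classical exponential sequence on $V(\C)$ by its $G$-equivariant analogue on $V_\top$. First, I would establish the real-analytic identification $\Br(V)\cong {_\tors}\H_G^2(V_\top,\cO_\an^\times)$. In the complex case this is Gabber's theorem combined with GAGA; in the real setting it follows from Krasnov's approach to the real-analytic Brauer group in \cite{Krasnov-Cohom}, and this is presumably where Wittenberg's input is decisive. Once this is in place, the computation reduces to an exact-sequence calculation on the $G$-space $X=V_\top$.

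Next I would apply the equivariant exponential sequence of sheaves on $X$,
\[
0\to \Z(1)\to \cO_\an \map{\exp} \cO_\an^\times \to 0,
\]
(the twist $\Z(1)$ appears because complex conjugation fixes $1$ but negates $2\pi i$). The resulting long exact sequence in Borel cohomology reads
\[
\H_G^2(X,\Z(1))\map{\eta} \H_G^2(X,\cO_\an)\to \H_G^2(X,\cO_\an^\times)\to \H_G^3(X,\Z(1))\to \H_G^3(X,\cO_\an).
\]
Since $\cO_\an$ is uniquely $2$-divisible as a $G$-sheaf, every $\H_G^n(X,\cO_\an)$ is a $\Q$-vector space, hence torsionfree and divisible. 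Writing $D=\coker(\eta)$, divisibility gives a splitting $\H_G^2(X,\cO_\an^\times)\cong D\oplus K$, where $K$ is the kernel of $\H_G^3(X,\Z(1))\to \H_G^3(X,\cO_\an)$.

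Taking torsion subgroups, the torsionfreeness of $\H_G^3(X,\cO_\an)$ forces ${_\tors}K={_\tors}\H_G^3(X,\Z(1))$. For ${_\tors}D$ I would use the standard fact that if $W$ is a $\Q$-vector space and $L\subset W$ is the image of a finitely generated abelian group of $\Q$-rank $\rho_0$, then $W/L$ is divisible with torsion subgroup $(\bQ/\Z)^{\rho_0}$ (plus the finite torsion of the image of $L$, which is absorbed into the ${_\tors}\H_G^3(X,\Z(1))$ contribution via the long exact sequence). Combined with step~1 this gives the displayed decomposition of $\Br(V)$, with $\rho_0$ equal to the rank of the image of $\eta$.

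For the inequality $\rho_0\ge h^{0,2}(V)$: by the remark preceding the theorem, $\H_G^2(X,\cO_\an)$ is an $\R$-vector space of dimension $h^{0,2}(V)$, so it suffices to show that the image of $\eta$ spans it over $\R$. I would descend this from the complex case: on $V\otimes\C$ the image of $H^2(V(\C),\Z(1))\to H^2(V(\C),\cO_\an)$ spans the target $\R$-vector space of dimension $2h^{0,2}$ (by the Hodge-theoretic argument used in Theorem \ref{h02=0}), and taking $G$-invariants yields an $\R$-spanning image inside the $h^{0,2}$-dimensional subspace $\H_G^2(X,\cO_\an)\cong H^2(V\otimes\C,\cO_\an)^G$. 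When $h^{0,2}=0$ the target already vanishes, so $D=0$ and $\rho_0=0$. The main obstacle I expect is step~1 --- pinning down the identification of $\Br(V)$ with the equivariant analytic cohomology group $\H_G^2(V_\top,\cO_\an^\times)_\tors$ for a real variety; the subsequent homological bookkeeping and Hodge-theoretic spanning argument are then routine extensions of the complex case.
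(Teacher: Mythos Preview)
Your proposal is correct and follows essentially the same route as the paper: Krasnov's identification $\Br(V)\cong{_\tors}\H_G^2(V_\top,\cO_\an^\times)$ (cited as \cite[0.1]{Krasnov-Cohom}), the equivariant exponential sequence, divisibility and torsionfreeness of $\H_G^n(X,\cO_\an)$, and the resulting decomposition (cited as \cite[0.2]{Krasnov-Cohom}). The one place the paper is more explicit is your spanning step: rather than ``descend from $V\otimes\C$ and take $G$-invariants,'' the paper writes down the section $f(x)=(x-\tau x)/2$ from $H^{0,2}$ to $H^2(X,\R(1))$ and checks it is $G$-equivariant (Lemma~\ref{H2-R(1)}), which is the concrete content behind your exactness-of-$G$-invariants argument.
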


\begin{subremark}
Recall from Theorem \ref{RBr} that ${_\tors}\H^3_G(V_\top,\Z(1))$
is isomorphic to the finite group $\RBr(V_\top)$. 
Thus Theorem \ref{pg>0} implies that
\begin{equation*}
\Br(V) \cong  \RBr(V_\top) \oplus (\bQ/\Z)^{\rho_0}.
\end{equation*}
\end{subremark}

\begin{proof}[Proof of \ref{pg>0}]
The exponential sequence 
$0\!\to\!\Z(1)\!\to\!\cO_\an\!\to\!\cO_\an^\times\!\to\!1$
of analytic $G$-sheaves on the $G$-space $X=V_\top$ 
yields an exact sequence
\[
\H^2_G(X,\Z(1))\!\smap{\eta}\! \H^2_G(X,\cO)\smap{}\!
\H^2_G(X,\Gm)\!\to\!\H^3_G(X,\Z(1))\!\to\!\H^3_G(X,\cO)
\]
where we have abbreviated $\cO_\an$ as $\cO$ for visual simplicity.
Note that the groups $\H^n_G(X,\cO)$ are torsionfree and divisible.
By Krasnov \cite[0.1]{Krasnov-Cohom}, the Brauer group
$\Br(V)$ is the torsion subgroup of $\H^2_G(X,\Gm)$.
If $h^{0,2}=0$ then the conclusion that 
$\Br(V)\cong\ {_\tors}\H^3_G(X,\Z(1))$ is immediate.

Let $\rho_0$ denote the $\Z$-rank of the image of the map $\eta$.
As observed by Krasnov \cite[0.2]{Krasnov-Cohom},
it follows that the torsion subgroup $\Br(V)$ of $\H^2_G(X,\Gm)$ 
is the direct sum of $(\bQ/\Z)^{\rho_0}$ and ${_\tors}\H^3_G(X,\Z(1))$.
Finally, Lemma \ref{H2-R(1)} below shows that $\rho_0\ge h^{0,2}$. 
\end{proof}

Let $\tau$ be the involution on $H^2(X,\C)$ 
given by complex conjugation on the coefficients;
the $\tau$-eigenspace decomposition is
$H^2(X,\C)\cong H^2(X,\R)\oplus H^2(X,\R(1))$.
By universal coefficients, $H^2(X,\Z(1))\otimes\R \cong H^2(X,\R(1)).$
When the group $G$ acts on both $X$ and the coefficients, we
have the de\,Rham action $\sigma$; it commutes with $\tau$ and we have
\[
\H_G^2(X,\Z(1))\otimes\R \cong  H^2(X,\Z(1))^G\otimes\R \cong
 H^2(X,\R(1))^G \cong \H_G^2(X,\R(1)).
\]

\begin{lemma}\label{H2-R(1)}
The map
$H^2(X,\R(1))^G\!\to\! H^2(X,\cO_\an)^G\!\cong\R^{h^{0,2}}$\!
is onto. Hence the image of 
$\H_G^2(X,\Z(1)) \smap{\eta} \H_G^2(X,\cO_\an)$
has rank  $\rho_0\ge h^{0,2}$.
\end{lemma}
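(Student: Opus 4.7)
The plan is to reduce the claim, via standard identifications of Borel cohomology, to showing that the $G$-equivariant map $H^2(X,\R(1)) \to H^2(X,\cO_\an)$ induced by the inclusion $\R(1) \cong i\R \hookrightarrow \cO_\an$ of (multiples of) constant imaginary values into holomorphic functions is surjective on $G$-invariants, and then to verify this by Hodge theory combined with averaging over $G$.

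First I will set up the identifications. The paper already records $\H_G^2(X,\Z(1)) \otimes \R \cong H^2(X,\R(1))^G$. The analogue for $\cO_\an$ follows because $\cO_\an$ is a sheaf of $\R$-vector spaces, so each $H^q(X,\cO_\an)$ is uniquely divisible, whence $H^p(G,H^q(X,\cO_\an)) = 0$ for $p > 0$ and the Borel spectral sequence collapses to give $\H_G^2(X,\cO_\an) \cong H^2(X,\cO_\an)^G$. The $G$-action on $H^2(X,\cO_\an) \cong H^{0,2}$ is $\C$-antilinear (it is induced by $f \mapsto \overline{f \circ \sigma}$ at the sheaf level), so its fixed set is a real form of $\R$-dimension $h^{0,2}$. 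Thus $\rho_0$ equals the $\R$-dimension of the image of $H^2(X,\R(1))^G \to H^2(X,\cO_\an)^G$, and we are reduced to showing this map is surjective.

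Next I will show that $H^2(X,\R(1)) \to H^2(X,\cO_\an)$ is already surjective before passing to invariants. Via Dolbeault and the degeneration of the Hodge-to-de Rham spectral sequence for the smooth projective variety $V\otimes_\R\C$, the map $H^2(X,\C) \to H^2(X,\cO_\an)$ coming from the inclusion of the constant sheaf is the projection $H^2(X,\C) \to H^{0,2}$ onto the last Hodge summand. Writing each real class uniquely as $\beta + \gamma + \bar\beta$ with $\beta \in H^{0,2}$ and $\gamma \in H^{1,1}_\R$, the projection $H^2(X,\R) \to H^{0,2}$ is surjective; multiplying by $i$ gives the same conclusion for $H^2(X,\R(1)) = iH^2(X,\R)$.

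Finally I will pass to $G$-invariants by averaging. The inclusion $\R(1) \hookrightarrow \cO_\an$ is $G$-equivariant, since for real $\lambda$ the image $i\lambda$ satisfies $\overline{i\lambda \circ \sigma} = -i\lambda$, matching the sign action on $\R(1)$. Hence the induced map on $H^2$ is $G$-equivariant, and because $|G|=2$ is invertible in $\R$, averaging any lift $v \in H^2(X,\R(1))$ of a prescribed $w \in H^2(X,\cO_\an)^G$ yields the $G$-invariant lift $\tfrac12(v + \sigma\cdot v)$. The delicate point throughout is book-keeping the $G$-actions through the several identifications — in particular confirming that the $G$-action on $H^{0,2}$ induced from the $G$-sheaf structure on $\cO_\an$ is the expected $\C$-antilinear involution whose real form has dimension $h^{0,2}$; once this is in place, combining the three steps gives $\rho_0 \ge h^{0,2}$ as asserted.
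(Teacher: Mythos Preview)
Your proof is correct and follows essentially the same route as the paper. The paper packages the argument as an explicit section $f(x)=(x-\tau x)/2$ from $H^{0,2}$ to $H^2(X,\R(1))$ and checks it is $G$-equivariant (since $\sigma$ commutes with $\tau$), whereas you first prove surjectivity via the Hodge decomposition and then average over $G$; these are the same idea in slightly different wrapping.
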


\begin{proof}
Write $H^{0,2}$ for $H^2(X,\cO_\an)$, and
define $f:H^{0,2}\to H^{2}(X,\C)$ by $f(x)=(x-\tau x)/2$;
since $\tau(f(x))=-f(x)$, $f(x)$ is in $H^2(X,\R(1))$. 
As 
$\sigma$ commutes with $\tau$,
$f$ sends $(H^{0,2})^G$ to $H^2(X,\R(1))^G$.
Since $\eta(f(x))=x$, the map 
\[
\H_G^2(X,\Z(1))\otimes\R\cong H^2(X,\R(1))^G \to 
(H^{0,2})^G \cong\H_G^2(X,\cO_\an)
\] 
is onto. Hence the image $I$ of $H^2(X,\Z(1))$
spans the $h^{0,2}$-dimensional $\R$-vector space $\H_G^2(X,\cO_\an)$.
Thus the rank $\rho_0$ of $I$ is  least $h^{0,2}$.
\end{proof}

\begin{remark}
Krasnov defines the {\it Lefschetz number} of $V$ to be $\rho_0$,
the $\Z$-rank of the image of $\eta:\H^2_G(X,\Z(1))\to\H^2_G(X,\cO_\an)$.
%
Krasnov also showed in \cite[(0.6)]{Krasnov-affine} that 
Theorem \ref{pg>0} holds for any smooth quasi-projective surface 
defined over $\R$. 
However, if $h^{0,2}=0$ we may have $\rho_0>0$.
\end{remark}

\begin{theorem}\label{real:pg=0}
Let $V$ be a smooth quasi-projective variety over $\R$.
If $V$ is geometrically connected, there is an exact sequence
\[
0\to (\bQ/\Z)^{\rho_0} \to \BW(V)\to \RGB(V_\top) \to 0.
\]
If $V$ is projective, then $\rho_0\ge h^{0,2}(V)$, and 
$\BW(V)\to \RGB(V_\top)$ is an 
isomorphism if and only if $h^{0,2}(V)=0$.
\end{theorem}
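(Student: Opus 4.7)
The plan is to reduce, via Lemma \ref{Cox-H1}, to comparing $\Br(V)$ with $\RBr(V_\top)$, and then to read off the exact sequence directly from the equivariant exponential sequence that already underlies the proof of Theorem \ref{pg>0}.

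First, by Lemma \ref{Cox-H1} the maps $\BW(V)\to\RGB(V_\top)$ and $\Br(V)\to\RBr(V_\top)$ have the same kernel and cokernel, so it suffices to produce a short exact sequence
\[
0\to (\bQ/\Z)^{\rho_0} \to \Br(V) \to \RBr(V_\top) \to 0.
\]
Set $X=V_\top$ and examine the equivariant exponential sequence of analytic $G$-sheaves as in the proof of Theorem \ref{pg>0}:
\[
\H^2_G(X,\cO_\an) \smap{\eta} \H^2_G(X,\Gm) \smap{\partial} \H^3_G(X,\Z(1)) \to \H^3_G(X,\cO_\an).
\]
The outer terms are torsionfree divisible $\R$-vector spaces, so taking torsion subgroups yields a surjection ${_\tors}\H^2_G(X,\Gm) \twoheadrightarrow {_\tors}\H^3_G(X,\Z(1))$ whose kernel is the torsion of $\operatorname{image}(\eta) = \H^2_G(X,\cO_\an)/\operatorname{image}(\H^2_G(X,\Z(1)))$. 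The latter is a quotient of a divisible $\R$-vector space by a subgroup of $\Z$-rank $\rho_0$, so its torsion is $(\bQ/\Z)^{\rho_0}$; this is exactly the argument used inside the proofs of Theorems \ref{h02=0} and \ref{pg>0}. Invoking Krasnov's identification $\Br(V) = {_\tors}\H^2_G(X,\Gm)$ (extended to the quasi-projective setting as noted before the statement) on the left, and Theorem \ref{RBr} giving $\RBr(X) = {_\tors}\H^3_G(X,\Z(1))$ on the right, this becomes the desired exact sequence.

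To finish, one must verify that the boundary $\partial$ coincides with the map $\Br(V)\to\RBr(V_\top)$ of Example \ref{Br-RBr}. This is a naturality check: the injection $\rho_X$ of Lemma \ref{rho injects} was itself defined via the coboundary associated to the exponential sequence (through the classifying spaces $PU_n$ and $\mu_n$), so the two boundaries agree by functoriality of cocycle constructions. For the projective case, Lemma \ref{H2-R(1)} gives $\rho_0\ge h^{0,2}(V)$ directly. Conversely, if $h^{0,2}(V)=0$, then $\H^2_G(X,\cO_\an)\cong H^2(V,\cO_V)=0$ by the isomorphism noted before Theorem \ref{pg>0}, so $\operatorname{image}(\eta)=0$ and $\rho_0=0$; the map is then an isomorphism. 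If $h^{0,2}(V)>0$ then $\rho_0>0$ and the kernel $(\bQ/\Z)^{\rho_0}$ is nontrivial.

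The main obstacle is the compatibility step: to make sure that the boundary $\partial$ arising from the equivariant exponential sequence agrees with the analytification map $\Br(V)\to\RBr(V_\top)$ defined fiberwise. Once this identification is in hand, all remaining ingredients (Krasnov's theorem, Theorem \ref{RBr}, Lemma \ref{H2-R(1)}, and the divisibility/rank analysis of $\eta$) have already been set up in earlier sections, and the proof reduces to assembling them.
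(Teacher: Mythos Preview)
Your proposal is correct and follows the same route as the paper, whose proof is the single line ``Combine Theorems \ref{RBr} and \ref{pg>0} with Lemma \ref{Cox-H1}.'' You have simply unpacked that line: you invoke Lemma \ref{Cox-H1} to reduce to $\Br(V)\to\RBr(V_\top)$, then reproduce the torsion analysis of the exponential sequence already carried out inside the proof of Theorem \ref{pg>0}, and finally cite Theorem \ref{RBr} and Lemma \ref{H2-R(1)} exactly as the paper does. The one thing you add is the explicit compatibility check that the exponential boundary $\partial$ agrees with the analytification map of Example \ref{Br-RBr}; the paper leaves this implicit (see the subremark after Theorem \ref{pg>0}), so your version is if anything more careful, but not a different argument.
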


\begin{proof}
Combine Theorems \ref{RBr} and \ref{pg>0} with Lemma \ref{Cox-H1}.
\end{proof}

\section{$WR$ for complex surfaces}\label{sec:C}

In this section, we compare $WR(V_{\top})$ and $W(V)$ when
$V$ is a complex surface.  As in the previous section,
$V_\top = G\!\times Y$\!, where $Y\!=\!V(\C)$.

\begin{proposition}\label{Gx4-fold}
Let $Y$ be a connected 4-dimensional CW complex, and set $X=G\times Y$. 
Then $WR(X)\to\RGB(X)$ is an injection, and 
\[
WR(G\times Y) \cong \Z/2\oplus H^1(Y,\Z/2)\oplus{_2}H^3(Y,\Z).
\]
Thus $WR(X)\to\RGB(X)$ is an 
isomorphism if and only if the torsion in $H^3(Y,\Z)$ has exponent~2.
\end{proposition}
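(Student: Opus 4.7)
The plan is to combine Theorem~\ref{WR-RGB} with two features specific to $X = G\times Y$: the equivariant cohomology of $X$ collapses to ordinary cohomology of $Y$, and $WR(X)$ is $2$-torsion. Since $G$ acts freely on $X$ with quotient $Y$, one has $\H^n_G(X,\Lambda)\cong H^n(Y,\Lambda)$ for constant coefficients, so Theorem~\ref{RBr} gives $\RBr(X)\cong {_\tors}H^3(Y,\Z)$. By Example~\ref{Xcovers}, the bundle $\R_X(1)$ is trivial in $\Pic_G(X)\cong H^1(Y,\Z/2)$ in this case, so the class $\Clone\ast\Clone = Q^{(-)}$ vanishes in $\Q_2(X)$ and the extension of Proposition~\ref{ext:Q_2} splits to give $\Q_2(X)\cong \Z/2\oplus H^1(Y,\Z/2)$. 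Applying Theorem~\ref{WR-RGB}, which is available since $\dim X = 4$ and $G$ acts freely, produces
\[
0 \to {_2}H^3(Y,\Z) \to \Cl(WR(X)) \to \Z/2\oplus H^1(Y,\Z/2) \to 0.
\]

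Next I would observe that this is an extension of elementary abelian $2$-groups. Under the identification $WR(G\times Y)\cong KO(Y)/r(K(Y))$ of Remark~\ref{GR=KOG}, where $r$ is realification, the identity $r\circ c = 2$ on $KO$ shows that $2\cdot KO(Y)\subseteq r(K(Y))$; hence $WR(X)$, and therefore its subgroup $\Cl(WR(X))$, is $2$-torsion. A short exact sequence of $\Z/2$-vector spaces automatically splits, so
\[
\Cl(WR(X)) \cong \Z/2 \oplus H^1(Y,\Z/2) \oplus {_2}H^3(Y,\Z).
\]

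It remains to prove that $\Cl: WR(X)\to\RGB(X)$ is injective. By Lemma~\ref{WR/I2}, $\ker\Cl\subseteq I_2(X)$, and on $I_2(X)$ the Clifford map is a genuine homomorphism into $\RBr(X)$ which, by Lemma~\ref{u=w_2}, coincides with $\bar{w}_2 = \tilde\beta\,w_2$. Via $WR(X)\cong KO(Y)/r(K(Y))$, injectivity thus reduces to the following classical claim in ordinary topology: any stable real bundle $[E]\in KO(Y)$ with $w_1(E)=0$ and $W_3(E) = \tilde\beta\,w_2(E) = 0$ lies in $r(K(Y))$, i.e., admits a stable complex structure. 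For $\dim Y\le 4$ this is standard obstruction theory applied to the realification map $BU\to BSO$: the only obstructions to stable complex structure through dimension~$4$ are the primary class $w_1\in H^1(Y,\Z/2)$ and the secondary class $W_3 = \tilde\beta\,w_2 \in H^3(Y,\Z)$, and higher obstructions vanish for dimensional reasons. This classical stably-complex criterion is the main obstacle; it is where the hypothesis $\dim Y\le 4$ is genuinely used, and once it is granted, injectivity together with the splitting already established yields the stated isomorphism.
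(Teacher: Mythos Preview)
Your proof is correct and takes a genuinely different route from the paper's.

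The paper argues directly: since $WR(G\times Y)$ is the cokernel of the realification map $KU(Y)\to KO(Y)$, it compares the Atiyah--Hirzebruch spectral sequences for $KU^*(Y)$ and $KO^*(Y)$.  The induced map on the $H^4$-row is the coefficient isomorphism $KU^4\cong KO^4$, the $H^0$-row contributes a $\Z/2$, the $H^1(Y,\Z/2)$-row of the $KO$-sequence has no $KU$-counterpart, and the $H^2$-row gives the cokernel of $H^2(Y,\Z)\to H^2(Y,\Z/2)$, namely ${_2}H^3(Y,\Z)$.  The extensions split because $WR(X)$ is a $WR(G)=\Z/2$-module.  Injectivity of $\Cl$ then follows by comparing with the description of $\RGB(G\times Y)$ in Proposition~\ref{free:RGB}.

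Your argument instead feeds $X=G\times Y$ into the general machinery already assembled: Theorem~\ref{WR-RGB} for the image of $\Cl$, the $2$-torsion of $WR(X)$ (via $r\circ c=2$) to split that extension, and then classical obstruction theory for the fibration $SO/U\to BU\to BSO$ to show $\Cl$ is injective, the only obstruction through dimension~$4$ being $W_3=\tilde\beta w_2$.  This is legitimate --- there is no circularity, since Theorem~\ref{WR-RGB} does not rely on Proposition~\ref{Gx4-fold} --- and it has the virtue of making the role of $W_3$ explicit.  The paper's AHSS comparison is more self-contained; your route is more structural but imports the stable-complex-structure criterion as a black box.  The two arguments are close cousins: the vanishing of $\pi_3(SO/U)$ underlying your obstruction-theory step is the same fact as the isomorphism $KU^4\cong KO^4$ used in the paper's spectral sequence comparison.
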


\begin{proof}
We saw in \cite[Ex.\,2.4(b)]{KSW} that $WR(G\times Y)$ is the 
cokernel of the `realization' map $KR(G\times Y)=KU(Y)\to KO(Y)$.  
Let $\widetilde{KU}(Y)$ denote the kernel of $\rank:KU(Y)\to\Z$.
By the Atiyah-Hirzebruch spectral sequence,
there is an exact sequence
\[
H^1(Y,\Z)\map{d_3} H^4(Y,\Z) \to \widetilde{KU}(Y) \map{c_1} H^2(Y,\Z)\to 0.
\]
Similarly, the map $(\rank,w_1):KO(Y)\to\Z\times H^1(Y,\Z/2)$ is onto,
and its kernel $SKO(Y)$ fits into an exact sequence
\[
H^1(Y,\Z/2) \map{d_3} H^4(Y,\Z)\to SKO(Y) \map{w_2} H^2(Y,\Z/2)\to 0.
\]
(The surjection $SKO(Y) \to H^2(Y,\Z/2)$ is $w_2$ by Theorem \ref{u=w_2(E)}.)

By naturality, there is a morphism between these spectral sequences
compatible with $KU^*(Y)\to KO^*(Y)$.  Since complex bundles are
oriented, they have $w_1=0$ \cite[12-A]{MS}, so the cokernel of 
$KU(Y)\to\Z\times H^1(Y,\Z/2)$ is $\Z/2\times H^1(Y,\Z/2)$.
The map $H^4(Y,\Z)\to H^4(Y,\Z)$ is a surjection, 
because it is induced from the coefficient isomorphism 
$\Z\cong KU^4\map{\simeq} KO^4$.
Hence the cokernel of $\widetilde{KU}(Y)\to SKO(Y)$ is
the cokernel of $H^2(Y,\Z)\to H^2(Y,\Z/2)$, i.e., ${_2}H^3(Y,\Z)$.

Finally, $WR(X)$ is a module over $WR(G)=\Z/2$, so all of the
potential extensions split.
\end{proof}

\begin{subremark}
As remarked in the proof of \ref{Gx4-fold},
$KO(Y)\map{w_1} H^1(Y,\Z/2)$ vanishes on the image of $KU(Y)$.
By the remark after Definition \ref{def:w_n},
this implies that the map $w_1:WR(X)\to \H_G^1(X,\Z/2)=H^1(Y,\Z/2)$ 
is well defined when $X=G\times Y$.
\end{subremark}

Now suppose that $V$ is a smooth quasi-projective complex surface.
F.\ Fern\'andez-Carmena \cite{Fernandez} proved that $W(V)$ is a split
extension of $\Z/2\times H^1_\et(V,\Z/2)$ by 
$H^0(V,\cH^2)\cong {_2}\!\Br(V)$, the quotient of $H^2_\et(V,\Z/2)$ 
by the image $\Pic(V)/2$ of the Chern class $c_1$. That is, there is a 
split extension 
\begin{equation}\label{W(complex)}
\xymatrix{
0\ar[r]&{_2}\Br(V)\ar[r]& W(V)\ \ar[r]^-{\rank,w_1}&
\ \Z/2\!\times\!H_\et^1(V,\Z/2)\ar[r]&0.
}\end{equation}

When $V$ is a projective surface, its {\it geometric genus} $p_g$
is the same as $h^{0,2}$.
Surfaces with $p_g=0$ include the projective plane $\bP^2$,
rational surfaces, ruled surfaces, K3 surfaces, and Enriques surfaces
(see \cite{Hart}).
Some surfaces of general type also have $p_g=0$, such as Godeaux surfaces,
Burniat surfaces and Mumford's fake projective plane.

\begin{theorem}\label{pg=0}
Suppose that $V$ is a smooth projective surface over $\C$. 
Then there is an exact sequence
\[
0\to (\Z/2)^{\rho} \to W(V)\to WR(V_\top) \to 0,
\]
where $\rho\ge2\,p_g$, and $\rho=0$ when $p_g=0$.
Thus $W(V)\to WR(V_\top)$ is an isomorphism if and only if ~$p_g=0$.
\end{theorem}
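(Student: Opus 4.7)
The approach is to compare the two known split short exact sequences for $W(V)$ and $WR(V_\top)$ via the natural comparison map and apply the Snake Lemma. Since $V$ is a complex variety, $V_\top = G\!\times\! Y$ with $Y = V(\C)$ a smooth real 4-manifold, so Proposition \ref{Gx4-fold} gives
\[
WR(V_\top) \cong \Z/2 \oplus H^1(Y,\Z/2) \oplus {_2}H^3(Y,\Z),
\]
as a split extension of $\Z/2\oplus H^1(Y,\Z/2)$ by ${_2}\RBr(V_\top)\cong{_2}H^3(Y,\Z)$. Meanwhile, by Fernandez-Carmena's theorem \eqref{W(complex)}, $W(V)$ is a split extension of $\Z/2\times H^1_\et(V,\Z/2)$ by ${_2}\Br(V)\cong H^0(V,\cH^2)$.

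First I would identify $H^1_\et(V,\Z/2) \cong H^1(Y,\Z/2)$ by Cox's theorem. Then I would verify that the natural map $W(V) \to WR(V_\top)$ fits into a morphism between the two split extensions. Compatibility on the $(\Z/2\times H^1)$-quotient is immediate: rank is respected by construction, and the algebraic $w_1$ goes to the topological $w_1$ by Theorem \ref{w1:factors}. Compatibility on the kernels, i.e.\ that ${_2}\Br(V)\to{_2}\RBr(V_\top)$ sits inside the map $W(V)\to WR(V_\top)$ as the map on the Brauer summand, is exactly the content of Lemma \ref{Hasse=w2}, which identifies the algebraic Hasse invariant with the topological $\bar{w}_2$.

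Next I would compute the induced map on kernels. By Theorem \ref{h02=0},
\[
\Br(V) \cong (\bQ/\Z)^{\rho} \oplus {_\tors}H^3(Y,\Z),
\]
and the natural map $\Br(V)\to\RBr(V_\top)\cong{_\tors}H^3(Y,\Z)$ is projection onto the second summand. Restricting to $2$-torsion, the map ${_2}\Br(V)\to{_2}H^3(Y,\Z)$ is a surjection with kernel $(\Z/2)^{\rho}$. Applying the Snake Lemma to the morphism of extensions, the quotient map is an isomorphism and the kernel map is surjective with kernel $(\Z/2)^{\rho}$, yielding
\[
0 \to (\Z/2)^{\rho} \to W(V) \to WR(V_\top) \to 0.
\]
The bounds $\rho \geq 2\,p_g$ and $\rho = 0 \iff p_g = 0$ are part of Theorem \ref{h02=0}, so the final equivalence follows.

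The main obstacle is establishing that the comparison map $W(V)\to WR(V_\top)$ really is a morphism of the two split short exact sequences; the delicate compatibility is on the Brauer piece, and is handled by Lemma \ref{Hasse=w2} (the algebraic Hasse invariant equals the topological $\bar{w}_2$ modulo the $\Pic$ indeterminacy, which vanishes after passing to $_2\RBr$). The $w_1$-compatibility at the quotient level is handled by Theorem \ref{w1:factors}. Once these two compatibilities are in hand, everything else is routine diagram chasing with the Snake Lemma plus Theorem \ref{h02=0}.
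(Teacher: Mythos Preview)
Your proposal is correct and follows essentially the same route as the paper. The paper's proof is simply: invoke Theorem~\ref{h02=0} to write ${_2}\Br(V)\cong(\Z/2)^{\rho}\oplus{_2}H^3(V(\C),\Z)$, then ``combine \eqref{W(complex)} and Proposition~\ref{Gx4-fold}.'' You have unpacked what ``combine'' means---namely, checking that the natural map $W(V)\to WR(V_\top)$ is a morphism of the two split extensions and applying the Snake Lemma---and supplied the references (Theorem~\ref{w1:factors} for the $w_1$-piece, Lemma~\ref{Hasse=w2} for the Brauer piece) that the paper leaves implicit. One minor remark: for a variety over $\C$ the identification $H^1_\et(V,\Z/2)\cong H^1(V(\C),\Z/2)$ is really the Artin comparison theorem rather than Cox's theorem proper, though of course it is also the special case $V_\top=G\times V(\C)$ of Cox.
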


\begin{proof}
By Theorem \ref{h02=0},
$_2\Br(V) \cong (\Z/2)^\rho \oplus\ {_2}H^3(V(\C),\Z)$.
Now combine \eqref{W(complex)} and Proposition \ref{Gx4-fold}.
\end{proof}

\begin{remark}
Suppose that $V$ is a smooth quasi-projective surface over $\C$, 
which is not projective. Then we still know that 
the kernel and cokernel of $W(V)\to WR(V_\top)$ are the same as
the kernel and cokernel of\; ${_2}\Br(V)\to{_2}H^3(V(\C),\Z)$.
This follows from Proposition \ref{Gx4-fold} and \eqref{W(complex)}.
\end{remark}


The invariant $\rho$ is arithmetic, and not topological, 
as the following example shows.

\begin{example}\label{ExEcomplex}
Consider the surface $V=E\times E$,  where
$E$ is an elliptic curve over $\C$. 
Then $\rho\ge2$, since $p_g(V)=1$. 
By the K\"unneth formula, $X=V_\top$ has $H^1(X,\Z/2)=(\Z/2)^4$
and $H^3(S^1,\Z)$ is torsionfree. 
By \eqref{eq:RGB(V)} and Proposition \ref{Gx4-fold},
\[
WR(V_\top) \cong \RGB(V_\top)\cong \Q_2(V_\top)\cong \Z/2 \oplus (\Z/2)^4.
\]
In contrast, Theorem \ref{h02=0} and \eqref{W(complex)} show that 
\[
\BW(V)\cong \RGB(X)\oplus (\bQ/\Z)^\rho \textrm{ and } 
W(V)\cong WR(V_\top) \oplus (\Z/2)^\rho,
\]
where $\rho$ is either 4 (the general case) 
or 3 (when $E$ has complex multiplication).
This follows from Kummer theory, $H^2(X,\Z)\cong\Z^6$, and the fact that
the Picard group $\Pic(V)$ is $NS(V)\oplus\Pic^0(V)$, where 
$\Pic^0(V)$ is divisible and the N\'eron-Severi group $NS(V)$ 
is either $\Z^2$ (the general case) or $\Z^3$ 
(when $E$ has complex multiplication).
\end{example}

\goodbreak
\section{$WR$ and $\RGB$ for surfaces with no $\R$ points}
\label{sec:free}

In this section, we compare $W(V)$ and $WR(V_\top)$ when
$V$ has no $\R$-points, i.e., $G$ acts freely on $V_\top$.
Theorem \ref{pg=0} describes the situation when $V$ is defined over $\C$,
so we may assume that $V$ is geometrically connected, i.e.,
$V_\top$ is connected.

Our first goal is to determine $WR(X)$ when $G$ acts freely on $X$
and $\dim(X)\le4$.  By Propoition \ref{ext:Q_2}, $\Q_2(X)$ is a nontrivial
extension of $\Z/2$ by $\H^1_G(X,\Z/2)=H^1(X/G,\Z/2)$. If we write
$\tH^1(X/G,\Z/2)$ for the quotient of $H^1(X/G,\Z/2)$ by the class of
$\R_X(1)$, then $\Q_2(X)$ is the product of $\Z/4$ and $\tH^1(X/G,\Z/2)$.


\begin{theorem}\label{WRX-free}
Let $X$ be a connected 4-dimensional $G$-CW complex.
If $G$ acts freely on $X$, then $WR(X)$ is a $\Z/8$-algebra, and:
\begin{enumerate}
\item $WR(X)$ is an extension:
\[
0 \to {_2}\H^3_G(X,\Z(1)) \to WR(X) \to \Z/4\times\tH^1(X/G,\Z/2)\to0;
\]
\item $\Cl: WR(X) \to \RGB(X)$ is an injection.  It is an isomorphism
if and only if the torsion in $\H^3_G(X,\Z(1))$ has exponent~2;
\end{enumerate}
\end{theorem}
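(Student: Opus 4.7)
The plan is to combine Theorem \ref{WR-RGB}---which already presents $\Cl(WR(X))$ as an extension of $\Q_2(X)$ by ${_2}\RBr(X)$---with explicit identifications of both end-terms under the free-action hypothesis, and then to verify separately that $\Cl$ is injective; parts~(1) and~(2) will follow.

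For the identifications, since $X$ is connected with free $G$-action, Example \ref{Xcovers} gives $\R_X(1)\ne 0$ in $\H^1_G(X,\Z/2)\cong H^1(X/G,\Z/2)$. By Example \ref{ex:Z/4}, $\Clone\ast\Clone = Q^{(-)}$ corresponds to this nonzero class, so $\Clone$ has order~$4$ in $\Q_2(X)$, and Proposition \ref{ext:Q_2} yields $\Q_2(X)\cong \Z/4\oplus \tH^1(X/G,\Z/2)$. Combined with ${_2}\RBr(X)\cong {_2}\H^3_G(X,\Z(1))$ from Theorem \ref{RBr}, Theorem \ref{WR-RGB} then gives exactly the extension in~(1), but for $\Cl(WR(X))$. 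Once $\Cl$ is shown to be injective, part~(1) follows; the ``if and only if'' in part~(2) drops out by comparison with Theorem \ref{RBW}: $\Cl$ is surjective (and hence bijective) iff ${_2}\RBr(X) = \RBr(X) = {_\tors}\H^3_G(X,\Z(1))$, iff this torsion has exponent~$2$.

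For injectivity, the diagram in the proof of Theorem \ref{WR-RGB} reduces matters to proving $\bar w_2 : I_2(X)\to {_2}\RBr(X)$ is injective, where $I_2(X) = \ker(WR(X)\to \Q_2(X))$. Given $q\in I_2(X)$ with $\bar w_2(q)=0$, I lift it to $KO_G(X)\cong KO(X/G)$ (Remark \ref{GR=KOG}) by a bundle of even rank with $w_1 = 0$ and with $w_2(q)$ in the image of $\H^2_G(X,\Z(1))/2\to \H^2_G(X,\Z/2)$. By the surjectivity of $c_1:KR(X)\to \H^2_G(X,\Z(1))$ (Remark \ref{Chern}) and the identity $c_1(\tilde E)\bmod 2 = w_2(u\tilde E)$ (Proposition \ref{WR-w2}), I pick a Real bundle $\tilde E$ on $X$ with $w_2(u\tilde E) = w_2(q)$ and replace $q$ by $q-u\tilde E$ (which vanishes in $WR(X)$), producing a representative $q'\in KO(X/G)$ of rank, $w_1$, and $w_2$ all equal to zero.

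The crucial step is then to show that any such $q'$ lies in $u(KR(X))$. By the Atiyah--Hirzebruch spectral sequence for $KO^0(X/G)$ with $\dim X/G\le 4$, the vanishing of rank, $w_1$, and $w_2$ places $q'$ in $F^3\widetilde{KO}(X/G) = F^4\widetilde{KO}(X/G)$ (since $\pi_3 KO = 0$), a subquotient of $H^4(X/G,\Z)$. The equivariant AHSS for $KR(X)$ has $E_2^{4,-4} = H^4_G(X,\Z) = H^4(X/G,\Z)$ (using the trivial $G$-action on $\pi_4 KU = \Z$), and on this level $u$ is the identity of $H^4(X/G,\Z)$, since $\pi_4 KU\to \pi_4 KO$ is an isomorphism (derived from the $\eta$-cofiber sequence $KO\smap{\eta} KO\to KU$). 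Thus $u$ sends $F^4\widetilde{KR}(X)$ onto $F^4\widetilde{KO}(X/G)$, provided the images of the $d_3$-differentials (arising from $\tilde\beta\,\Sq^2$ in both spectral sequences) are compatible under $u$. Verifying this compatibility---especially because the source $H^1_G(X,\Z(1))$ of $d_3^{KR}$ maps to the source $H^1(X/G,\Z/2)$ of $d_3^{KO}$ by mod-$2$ reduction rather than an isomorphism---is the main technical obstacle. Finally, the $\Z/8$-algebra claim is a direct consequence of the extension in~(1): the quotient $\Z/4\oplus\tH^1$ has exponent~$4$ and the kernel ${_2}\H^3_G$ has exponent~$2$, so $WR(X)$ has exponent dividing~$8$.
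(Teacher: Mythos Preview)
Your approach is essentially the paper's---both rest on comparing the Atiyah--Hirzebruch spectral sequences for $KR^*(X)$ and $KO_G^*(X)\cong KO^*(X/G)$ under the forgetful/hyperbolic map $h$---but the paper organizes the argument more efficiently, and you leave unaddressed a point that is in fact no obstacle at all.

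The paper computes $WR(X)=\coker(h)$ directly from the filtration. Writing $Y=X/G$, the AHSS for $KR$ (with $G$ acting freely) has $E_2^{p,q}=H^p(Y,\Z(q/2))$ for $q$ even and $0$ for $q$ odd, while that for $KO(Y)$ has the familiar coefficient pattern. On associated gradeds, $h$ is $\Z\smap{2}\Z$ in degree~$0$, the zero map in degree~$1$, reduction $H^2_G(X,\Z(1))\to H^2_G(X,\Z/2)$ in degree~$2$, and the isomorphism $H^4(Y,\Z)\smap{\cong}H^4(Y,\Z)$ in degree~$4$. A snake-lemma chase on the resulting four-term exact rows identifies $I_2(X)=\coker\bigl(\widetilde{KR}(X)\to SKO(Y)\bigr)$ with ${_2}H^3_G(X,\Z(1))$. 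Since Theorem~\ref{WR-RGB} already gives a surjection $\bar w_2:I_2(X)\to{_2}\RBr(X)$ and both sides are finite of the same order, $\bar w_2$ is an isomorphism; this delivers the extension~(1) and injectivity of $\Cl$ simultaneously, with no element-chasing.

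The ``main technical obstacle'' you flag is not one. Commutativity of the $d_3$ square is automatic from naturality of the spectral sequences under $h$. For the surjectivity you actually need---$F^4\widetilde{KR}(X)\to F^4 SKO(Y)$---it suffices that the map on $E_2^{4,-4}$ terms is surjective, and it is an isomorphism; a surjection of targets always induces a surjection of cokernels, regardless of whether the images of the two $d_3$'s coincide. Once this is observed, your element-chase goes through, provided you also take the correcting Real class $\tilde E$ of virtual rank~$0$ and note that $w_1(u\tilde E)=0$ (equivalently, that the degree-$1$ graded piece of $KR$ vanishes), so that subtraction keeps you inside $SKO(Y)$ and $w_2$ behaves additively there. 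But at that point you have reproduced the paper's diagram chase in a more roundabout form.
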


\begin{proof} 
The image of $WR(X)$ in $\RGB(X)$ is an
extension of $\Q_2(X)$ by ${_2}\!\RBr(X)$, by Theorem \ref{WR-RGB}. Moreover,
$\RBr(X)$ is the torsion
subgroup of $\H_G^3(X,\Z(1))$ by Theorem \ref{RBr}. Thus
it suffices to show that the kernel of $WR(X)\to\Q_2(X)$
has the same cardinality as ${_2}\!\RBr(X)$.

Recall from \cite[2.4(b)]{KSW} that $KO_G(X)=KO(X/G)$.
We will determine $WR(X)$ by comparing the Atiyah-Hirzebruch 
spectral sequences for $KO_G(X)$ and $KR(X)$,
using the hyperbolic map $h: KR\to KO_G$.

It is convenient to write $Y$ for $X/G$, so $KO_G(X)=KO^0(Y)$.
Recall that the rank and determinant define a surjection
$KO(Y)\to\Z\oplus H^1(Y,\Z/2)$, and we write $SKO(Y)$
for the kernel of this surjection. 
The Atiyah-Hirzebruch spectral sequence for $KO^*(Y)$ yields a
filtration on $KO^0(Y)$ whose associated graded groups are
$\Z$, $H^1(Y,\Z/2)$, $H^2(Y,\Z/2)$, 0, and the cokernel of
$d_3^{1,-2}:H^1(Y,\Z/2)\to H^4(Y,\Z)$. The first two layers correspond to the 
rank and determinant, and the rest describe $SKO(Y)$ as an extension.


There is also an Atiyah-Hirzebruch spectral sequence for $KR^*(X)$,
with $E_2^{p,q}=0$ when $q$ is odd, and $E_2^{p,q}=H^p(Y,\Z)$
when $q\equiv0\pmod4$.  When $q\equiv2\pmod4$, we have
$E_2^{p,q}=H^p(Y,\Z(1))$ (cohomology with twisted coefficients);
this is because $G$ acts freely, so
$X$ has a covering by open sets of the form $G\times U$.
Thus the associated graded groups of $KR^0(X)$ are
$\Z$, 0, $H^2(Y,\Z(1))$, 0, and the cokernel of
$d_3:H^1(Y,\Z/2)\to H^4(Y,\Z)$.

We now compare the two spectral sequences.
The top quotient is the injection $\Z\smap{2}\Z$, so modulo the 
second layer $I_2$ of the filtration of $WR(X)$ we have the extension
$\Q_2(X)$ of $\Z/2$ by $\H_G^1(X,\Z/2)$ discussed in
Lemmas \ref{WR/I2} and \ref{w1 on I}.

The second layer is determined by the following
commutative diagram with exact sequences; the map
$c_1$ is onto by Remark \ref{Chern}, and $w_2$ is onto because
$\H^2_G(X,\Z/2)\cong H^2(Y,\Z/2)$.
\begin{align*}\xymatrix@R=1.5em{
\H^1_G(X,\Z(1))\ar[r]^-{d_3} \ar[d] & H^4(Y,\Z) \ar[r] \ar[d]^{\cong}  
&\widetilde{KR}(X) \ar[r]^-{c_1} \ar[d]^h 
& \H^2_G(X,\Z(1))\ar[r] \ar[d] & 0, \\
\H^1_G(X,\Z/2) \ar[r]^-{d_3} & H^4(Y,\Z)\ar[r] 
&SKO(Y) \ar[r]^-{w_2}& \H^2_G(X,\Z/2)\ar[r]& 0.
}\end{align*}
\noindent
As in the proof of Proposition \ref{Gx4-fold}, the second vertical 
is the isomorphism $H^4(Y,KU^4)\!\to\!H^4(Y,KO^4)$.
Thus the cokernel of the third vertical $h$
is the cokernel ${}_2\H^3_G(X,\Z(1))$
of $\H^2_G(X,\Z(1))\!\to\!\H^2_G(X,\Z/2)$.
\end{proof}

Recall that $\RGB(X)$ is an extension of $\Q_2(X)$
by the torsion subgroup of $\H_G^3(X,\Z(1))$, and that $\Q_2(X)$
is the nontrivial extension $\Z/4\times\tH^1(X,\Z/2)$ of
$\Z/2$ by $H^1(X,\Z/2)$, $\tH^1(X,\Z/2)$ being the cokernel of
$H^1(\pt,\Z/2)\to H^1(X,\Z/2)$.
(See Theorems \ref{RBW} and \ref{RBr}, and Proposition \ref{ext:Q_2}.)
The following example shows that the extensions for $\RGB(X)$ and
$WR(X)$  can be nontrivial.

\begin{example}
Let $X=S^{5,0}$ be the 4-sphere with antipodal involution.
Then 
\[  WR(X)\cong\RGB(X)=\Z/8. \]
Indeed, $X$ is the 4-skeleton of $EG=\R\bP^\infty$, so the 
Borel cohomology $\H_G^n(X,-)$ agrees with 
$\H_G^n(EG,\!-)\cong\H_G^n(\pt,\!-)$ for $n\!<\!4$. 
In particular, 
\[
\H_G^3(X,\Z(1))\cong \H_G^3(\pt,\Z(1))\cong H^3(G,\Z(1))\cong\Z/2.
\]
\goodbreak\noindent
Hence $WR(X)\cong\RGB(X)$ by Theorem \ref{WRX-free}, and
we showed in \cite[Ex.\,2.5]{KSW} that $WR(X)\cong\Z/8$.
\end{example}




Now suppose that $V$ is a smooth geometrically connected algebraic surface 
defined over $\R$, with $V(\R)=\emptyset$.  
It is well known that $W(V)$ is a nontrivial extension of $\Z/2$ by
the augmentation ideal $I(V)$.
The following result is due to Sujatha \cite[Lem.\,3.4]{Sujatha}.

\begin{theorem}[Sujatha]\label{W(V)}
Let $V\!$ be a smooth geometrically connected surface over $\R$
with no real points.  Then we have a short exact sequence
\[
0 \to {}_2\!\Br(V)\to W(V) \to \Z/4\times\tH^1_\et(V,\Z/2) \to 0.
\]
The map from $W(V)$ to $\Q_2(V)=\Z/4\times\tH^1_\et(V,\Z/2)$
is given by the rank and discriminant of a symmetric form.
\end{theorem}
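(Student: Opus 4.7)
The plan is to analyze the composite $W(V)\smap{\Cl}\BW(V)\smap{\hatZ}\Q_2(V)$ of Definition \ref{def:Witt} and Theorem \ref{BW}, identifying its image with $\Z/4\times\tH^1_\et(V,\Z/2)$ and its kernel with ${}_2\!\Br(V)$. First, for $V$ geometrically connected over $\R$, a Hochschild--Serre argument on the Galois cover $V\otimes_\R\C\to V$ shows that the class $[-1]\in H^1_\et(\Spec\R,\Z/2)$ injects into $H^1_\et(V,\Z/2)$. By Example \ref{ex:Z/4} the product in $\Q_2$ satisfies $\Clone\ast\Clone = Q^{(-)}\leftrightarrow[-1]$, so the extension \eqref{eq:Q_2} is nontrivial and
\[
\Q_2(V)\;\cong\;\Z/4\cdot\Clone\;\oplus\;\tH^1_\et(V,\Z/2),
\]
with $\langle 1\rangle\in W(V)$ mapping to the generator $\Clone$ of the $\Z/4$-summand. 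The algebraic analogue of Lemma \ref{WR/I2}, with the same proof, shows that the composite is the rank-and-discriminant map of the statement; it is surjective, as the $\Z/4$-summand is hit by $\langle 1\rangle$ and the $\tH^1$-summand by one-dimensional forms (using Kummer theory combined with the injectivity $W(V)\hookrightarrow W(F)$ for $F=\R(V)$).

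The kernel of the rank-and-discriminant map is the square $I^2(V)$ of the augmentation ideal, by the classical filtration theorem for Witt rings (Arason--Pfister). It remains to identify $I^2(V)\cong{}_2\!\Br(V)$ via the Clifford (Hasse) invariant $e_2$. Surjectivity of $e_2$ follows from Merkurjev's theorem over $F$ --- every $2$-torsion Brauer class is a sum of quaternion algebras, which are Clifford algebras of $2$-dimensional forms in $I^2$ --- together with Gabber's purity for ${}_2\!\Br$ (see \cite{deJong}) and the Ojanguren--Panin injectivity $W(V)\hookrightarrow W(F)$, valid for smooth $V$ containing $1/2\in\cO_V$. Injectivity of $e_2$ is equivalent to $I^3(V)=0$: since $V(\R)=\emptyset$, the $2$-cohomological dimension of $F$ is at most $2$, so $H^3_\et(F,\Z/2)=0$, and the Milnor conjecture (Voevodsky) gives $I^3(F)=0$; the Ojanguren--Panin injection then forces $I^3(V)=0$.

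The main obstacle is the vanishing $I^3(V)=0$: it is the unique place where the hypothesis $V(\R)=\emptyset$ enters essentially (without it, signatures at real points obstruct the vanishing), and it draws on both the cohomological-dimension bound for $F$ and the Milnor conjecture. The remaining steps reduce to formal consequences of the structure theorems for $\BW$ and $\Q_2$ developed in Section~\ref{sec:BW}, together with classical results of Arason and Merkurjev on Witt rings.
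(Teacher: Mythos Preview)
Your plan is essentially correct and follows the same route the paper cites from Sujatha: identify the rank-and-discriminant map with $W(V)\to\Q_2(V)$, and show the Hasse invariant gives an isomorphism from its kernel to ${}_2\!\Br(V)$. The paper itself does not reprove this; it simply invokes \cite[2.1,\,2.2]{Sujatha} and translates her notation $\Gamma_t(V,\cH^i)$ into $H^1_\et(V,\Z/2)$ and ${}_2\!\Br(V)$ via Bloch--Ogus. So you are supplying the argument the paper only quotes.

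Two points deserve tightening. First, your appeal to the full Milnor conjecture is anachronistic overkill: Sujatha's 1990 paper predates Voevodsky, and all that is needed is Merkurjev's theorem ($e_2$ surjective) and Arason's 1975 injectivity of $e_3\colon I^3/I^4\hookrightarrow H^3$; combined with $\text{cd}_2(F)\le2$ (which does follow from $V(\R)=\emptyset$ via $\text{vcd}_2(F)=\text{cd}_2(F(\sqrt{-1}))\le2$), this already forces $I^3(F)=0$. Second, your surjectivity step for $e_2$ is under-argued: Merkurjev plus ``Gabber's purity for ${}_2\!\Br$'' plus the injection $W(V)\hookrightarrow W(F)$ do not by themselves let you descend a form from $I^2(F)$ to $V$. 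What is actually needed is purity for the \emph{Witt group} (so $W(V)$ is the unramified Witt group of $F$), the compatibility $\partial^H\circ e_2 = e_1\circ\partial^W$ of residues, and the observation that $I^2(\kappa(x))=0$ at each codimension-one point $x$ (since $\overline{\{x\}}(\R)=\emptyset$ forces $\text{cd}_2(\kappa(x))\le1$). This is exactly what Sujatha carries out via the Gersten-type complexes; your sketch gestures at the right ingredients but conflates Brauer-group purity with Witt-group purity. Also, the reference to ``Arason--Pfister'' for $\ker(e_1)=I^2$ is a misattribution --- that name refers to the Hauptsatz, not to the identification of $\ker(e_1)$; over $V$ one should either define $I_2(V)$ as $\ker(e_1)$ (as the paper does) or justify that it coincides with $I(V)^2$.
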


\begin{proof} 
Since $V$ has no real points, $W(V)$ is a torsion group.
Sujatha proves in \cite[2.1, 2.2]{Sujatha} that
the discriminant $I(V)\to H^1_\et(V,\Z/2)$ 
is a surjection with kernel $I_2(V)$, and that the Hasse invariant
$I_2(V)\to {_2}\!\Br(V)$ is an isomorphism.
Sujatha writes $\Gamma_t(V,\cH^1)$ and $\Gamma_t(V,\cH^2)$ for 
$H^1_\et(V,\Z/2)$ and ${_2}\!\Br(V)$; the reinterpretation 
is standard, using \cite{BlochOgus}.
\end{proof}

Using Theorem \ref{BW}, we conclude:

\begin{subcor}\label{W-BW-Q}
For $V$ as in Theorem \ref{W(V)}, the map
$W(V)\to\BW(V)$
is an injection, and is an isomorphism exactly when
the Brauer group $\Br(V)$ has exponent~2.
\end{subcor}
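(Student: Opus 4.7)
The plan is to compare the two short exact sequences: the one for $W(V)$ provided by Sujatha's Theorem \ref{W(V)} and the Brauer--Wall extension of Theorem \ref{BW}, and then to run the snake lemma. First, I would assemble the commutative diagram
\[
\xymatrix@R=1.5em{
0 \ar[r] & {}_2\Br(V) \ar[d] \ar[r] & W(V) \ar[d]^{\Cl} \ar[r] &
   \Z/4\times\tH^1_\et(V,\Z/2) \ar[d]^{\cong} \ar[r] & 0 \\
0 \ar[r] & \Br(V) \ar[r] & \BW(V) \ar[r]^{\hatZ} & \Q_2(V) \ar[r] & 0,
}
\]
where the left vertical map is the natural inclusion ${}_2\Br(V)\hookrightarrow\Br(V)$. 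The middle vertical is the Clifford algebra map. The right vertical would be defined by sending the generator of $\Z/4$ to $\Cl_{\langle 1\rangle}\in\Q_2(V)$ and the elements of $\tH^1_\et(V,\Z/2)$ to the corresponding Real quadratic algebras under \eqref{eq:Q_2}.

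Second, I would check that this right vertical is an isomorphism. Since $V$ is geometrically connected and has no real points, the étale double cover $V\otimes_\R\C\to V$ is nontrivial, so $[-1]$ is a nonzero element of $H^1_\et(V,\Z/2)$. By Example \ref{ex:Z/4} applied in the algebraic setting, $\Cl_{\langle 1\rangle}\ast\Cl_{\langle 1\rangle}$ is the quadratic algebra $Q^{(-)}$, which corresponds to $[-1]$; therefore $\Cl_{\langle 1\rangle}$ has order $4$ in $\Q_2(V)$ and is the generator of the non-trivial extension of $H^0(V,\Z/2)=\Z/2$ by $H^1_\et(V,\Z/2)$. Splitting off $\langle[-1]\rangle\subset H^1_\et(V,\Z/2)$ produces the desired isomorphism $\Z/4\times\tH^1_\et(V,\Z/2)\cong \Q_2(V)$, compatibly with $\Cl_{\langle1\rangle}$.

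Third, I would verify that the diagram commutes. The right square commutes essentially by construction: the map $W(V)\to\Q_2(V)=\Z/4\times\tH^1_\et(V,\Z/2)$ in Sujatha's theorem is $(\rank\!\!\mod 2,\disc)$, while $\hatZ\circ\Cl$ records exactly the rank mod~$2$ of $E$ (contributed by $\Cl_{\langle 1\rangle}^{\rank E}$) and the discriminant of the form. Commutativity of the left square amounts to the fact that, on $I_2(V)$ (forms of even rank and trivial discriminant), the Clifford algebra lands in $\Br(V)\subset\BW(V)$ and agrees with the classical Hasse invariant; this is the algebraic analogue of Lemma \ref{Hasse=w2}, and combined with Sujatha's identification $I_2(V)\smap{\cong}{}_2\Br(V)$ it shows the left square commutes.

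Finally, I would invoke the snake lemma. Because the right vertical is an isomorphism, its kernel and cokernel vanish; because the left vertical is the tautological inclusion, its kernel is $0$ and its cokernel is $\Br(V)/{}_2\Br(V)$. The snake lemma then yields $\ker(\Cl)=0$, proving injectivity of $W(V)\to\BW(V)$, and $\coker(\Cl)\cong \Br(V)/{}_2\Br(V)$, which vanishes exactly when $\Br(V)$ has exponent~$2$. The main obstacle is the verification of the left square, since one must know that Clifford algebras of forms in $I_2(V)$ truly coincide with their Hasse invariant in $\Br(V)$; this is standard over fields but must be confirmed in the scheme-theoretic setting used here.
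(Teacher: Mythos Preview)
Your approach is correct and is precisely the argument the paper has in mind: the corollary is stated immediately after Sujatha's Theorem \ref{W(V)} with only the sentence ``Using Theorem \ref{BW}, we conclude,'' so the intended proof is exactly the comparison of the two extensions via the Clifford map, which you have spelled out in full. Your concern about the left square is not a real obstacle here: once the right square commutes (i.e., $\hatZ\circ\Cl$ computes rank mod~2 and discriminant), the Clifford map automatically sends $I_2(V)$ into $\Br(V)$, and Sujatha's identification of $I_2(V)$ with ${}_2\Br(V)$ via the Hasse invariant is already part of the input Theorem \ref{W(V)}; the compatibility of Hasse with the Clifford class in $\Br(V)$ is classical (Wall, or \cite[IV]{Lam}).
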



Putting together the pieces, and using Lemma \ref{Cox-H1}, we see that
in the setting of Theorem \ref{WRX-free},
we have a commutative diagram with exact rows.
\begin{align*}\xymatrix@R=1.5em{
0\ar[r]& {_2}\Br(V) \ar[r]\ar[d] &W(V) \ar[r]\ar[d] 
        & \Q_2(V)\ar[d]^{\cong}\ar[r] &0 \\
0\ar[r]& {_2}\RBr(V_\top) \ar[r] &WR(V_\top) \ar[r]
        & \Q_2(V_\top)\ar[r] &0
}\end{align*}

Recall that the Lefschetz number $\rho_0$ is the rank of the cokernel
of the equivariant Chern class $\Pic(V)\to H^2(X,\Z(1))$ of
Remark \ref{Chern}, and that $p_g=h^{0,2}$.
Combining Theorems \ref{pg>0} and \ref{real:pg=0}, 
we have proven:

\begin{theorem}\label{WR:pg=0}
Let $V$ be a smooth geometrically connected projective surface over $\R$
with no real points. Then there is a split exact sequence
\[
0\to (\Z/2)^{\rho_0} \to W(V)\to WR(V_\top) \to 0 \\
\]
(with $\rho_0$ as in Theorem \ref{real:pg=0}),
and  $W(V)\to WR(V_\top)$ is
an isomorphism if and only if $V$ has geometric genus~$p_g=h^{0,2}=0$.
\end{theorem}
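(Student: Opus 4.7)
The plan is to apply the Snake Lemma to the commutative diagram of short exact sequences displayed just before the theorem statement. By Lemma \ref{Cox-H1}, the rightmost vertical map $\Q_2(V) \to \Q_2(V_\top)$ is an isomorphism, so the Snake Lemma reduces the computation of the kernel and cokernel of the middle vertical $W(V) \to WR(V_\top)$ to that of the leftmost vertical ${_2}\Br(V) \to {_2}\RBr(V_\top)$.

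To analyze this left vertical, I combine Theorem \ref{pg>0} with Theorem \ref{RBr}. The former yields a direct-sum decomposition
\[
\Br(V) \cong (\bQ/\Z)^{\rho_0} \oplus {_\tors}\H_G^3(V_\top,\Z(1)),
\]
and the latter identifies ${_\tors}\H_G^3(V_\top,\Z(1))$ with $\RBr(V_\top)$. Passing to 2-torsion gives ${_2}\Br(V) \cong (\Z/2)^{\rho_0} \oplus {_2}\RBr(V_\top)$, in which the map to ${_2}\RBr(V_\top)$ is the obvious projection. Hence the left vertical is a surjection with kernel $(\Z/2)^{\rho_0}$, and the Snake Lemma produces the short exact sequence
\[
0 \to (\Z/2)^{\rho_0} \to W(V) \to WR(V_\top) \to 0.
\]

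For the splitting, I would combine the inclusion $W(V) \hookrightarrow \BW(V)$ from Corollary \ref{W-BW-Q} with the injection $WR(V_\top) \hookrightarrow \RGB(V_\top)$ from Theorem \ref{WRX-free}, and use the canonical splitting of $\BW(V) \cong (\bQ/\Z)^{\rho_0} \oplus \RGB(V_\top)$ (coming from the divisibility of $(\bQ/\Z)^{\rho_0}$, as in the proof of Theorem \ref{real:pg=0}). Restricting the projection $\BW(V) \to (\bQ/\Z)^{\rho_0}$ to the subgroup $W(V)$ is intended to produce a retraction of the inclusion $(\Z/2)^{\rho_0} \hookrightarrow W(V)$. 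Finally, the ``if and only if'' statement is immediate from Theorem \ref{pg>0}: since $\rho_0 \ge h^{0,2}$ and $\rho_0 = 0$ when $h^{0,2} = 0$, we have $\rho_0 = 0$ precisely when $p_g = h^{0,2} = 0$, which is exactly when $\tau$ is an isomorphism.

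The main obstacle will be making the splitting rigorous. The Snake Lemma alone yields only the exact sequence; the ``split'' assertion requires that $(\Z/2)^{\rho_0} \subset W(V)$ be a direct summand, which by a pullback-of-extensions argument reduces to showing that the component of the extension class of $W(V)$ in $\mathrm{Ext}^1(\Q_2(V),(\Z/2)^{\rho_0})$ arising from the decomposition of ${_2}\Br(V)$ vanishes. One must verify that the projection $W(V) \to (\bQ/\Z)^{\rho_0}$ lands in the 2-torsion $(\Z/2)^{\rho_0}$, which is not automatic from divisibility alone since $W(V)$ can contain elements of order $4$ via the $\Z/4$-summand of $\Q_2(V)$ in Sujatha's Theorem \ref{W(V)}. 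Controlling this via the bound on the exponent of $W(V)$ and the fact that the $\Z/4$-summand of $\Q_2(V)$ comes from the ``graded part'' of $\BW(V)$ rather than the Brauer part should complete the argument.
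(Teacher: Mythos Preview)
Your approach is essentially the same as the paper's: the paper sets up the same commutative diagram of short exact sequences, invokes Lemma~\ref{Cox-H1} for the isomorphism on $\Q_2$, and then states that the result follows by ``combining Theorems~\ref{pg>0} and~\ref{real:pg=0}''---which amounts exactly to your Snake Lemma argument together with the 2-torsion of the decomposition $\Br(V)\cong(\bQ/\Z)^{\rho_0}\oplus\RBr(V_\top)$. Your careful discussion of the splitting actually goes beyond what the paper provides; the paper simply asserts the sequence is split without further argument, so your identification of this as the delicate point (and your proposed resolution via the embedding into $\BW(V)$ and the divisible summand) is more scrupulous than the original.
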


\newpage 
\appendix

\section{An equivariant Theorem of Brown}

In this Appendix, we give an equivariant version of 
a Theorem of E.\ Brown, taken from \cite[7.1]{Dold}.
Let $W_*$ denote the category  of finite pointed, connected CW complexes.
A homotopy invariant contravariant functor $F:W_*\to\mathbf{Sets}$
is called {\it half-exact} if the natural maps
$F(X_1\vee X_2)\to \prod F(X_i)$ are bijections and for every
subcomplex $A$ of $X$ the map $F(X\cup_A X)\to F(X)\times_{F(A)}F(X)$ is onto.

Brown's Theorem: 
Let $\rho:E\to F$ be a natural transformation between half-exact 
functors from the category $W_*$ to $\mathbf{Sets}$.
If $\rho_{S^n}$ is a bijection for every sphere $S^n$, then
$\rho_X$ is a bijection for every connected $X$ in $W_*$.


Let $G$ be a finite group, and let $W_*(G)$ denote the category of
finite pointed  $G$-spaces with $X/G$ connected, and equivariant maps. 
The notion of a $G$-half-exact functor is the same as that of a half-exact
functor, with $W_*$ replaced by $W_*(G)$.
Given an equivariant map $f: A\to X$ in $W_*(G)$, we can form the 
mapping cone $C_f$ and the long exact Puppe sequence of pointed sets
\begin{equation*}
\cdots F(SX) \map{Sf^*} F(SA) \to F(C_f) \to F(X) \map{f^*} F(A).
\end{equation*}
See \cite[III.4]{Bredon}.  Because $SA$ is a co-$H$-space object
in $W_*(G)$, $F(SA)$ is a group, and acts on $F(C_f)$; the action
$F(SA)\times F(C_f)\to C_f$ is given by the 
usual co-multiplication $\delta:C_f\to SA\vee C_f$.
See \cite[III(6.20)]{Whitehead}.
The proof of \cite[III(6.21)]{Whitehead} and/or
\cite[5.6]{Dold} goes through to prove:

\begin{lemma}\label{action}
Elements $a,b\in F(C_f)$ agree in $F(X)$ if and only if 
there is a $\gamma\in F(SA)$ so that $\gamma\cdot a = b$.
\end{lemma}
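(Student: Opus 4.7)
The plan is to transcribe the classical arguments of \cite[III(6.21)]{Whitehead} (or equivalently \cite[5.6]{Dold}) into the equivariant category $W_*(G)$, checking that each geometric construction and each invocation of half-exactness respects the $G$-action. All the relevant ingredients---mapping cones, suspensions, wedges, and double mapping cones---are built functorially from $G$-spaces and $G$-equivariant maps, so the classical machinery carries over once $G$-half-exactness is assumed.

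For the \emph{if} direction, the action $F(SA)\times F(C_f)\to F(C_f)$ is by definition the composite
\[
F(SA)\times F(C_f) \xrightarrow{\;\cong\;} F(SA\vee C_f) \xrightarrow{\;\delta^*\;} F(C_f),
\]
where the first map is the inverse of the wedge isomorphism from $G$-half-exactness and $\delta\colon C_f\to SA\vee C_f$ is the equivariant comultiplication. Since the composite $X\hookrightarrow C_f\xrightarrow{\delta} SA\vee C_f$ factors through the second wedge summand $C_f$, pulling back to $F(X)$ kills the $F(SA)$ factor, so $\gamma\cdot a$ and $a$ have the same restriction to $F(X)$.

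For the \emph{only if} direction, I would form the equivariant double mapping cone $Y=C_f\cup_X C_f$, the $G$-pushout of two copies of $C_f$ along their common $X$. Equivariant half-exactness gives that $F(Y)\to F(C_f)\times_{F(X)}F(C_f)$ is surjective, so the pair $(a,b)$ lifts to some $c\in F(Y)$. The key geometric fact is that $Y$ is $G$-equivariantly homotopy equivalent to $C_f\vee SA$: the fold map $\nabla\colon Y\to C_f$ (identity on each copy) provides an equivariant retraction of the inclusion of the first copy, and the quotient $Y/C_f$ is $SA$, so the split cofibre sequence $C_f\to Y\to SA$ yields the wedge decomposition. Under the resulting isomorphism $F(Y)\cong F(C_f)\times F(SA)$, the class $c$ corresponds to a pair $(a,\gamma)$, and unwinding the definition of the action shows $\gamma\cdot a=b$.

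The main obstacle---really the only substantive point---is checking that all the maps above are $G$-equivariant and that the identification $F(Y)\cong F(C_f)\times F(SA)$ sends $c$ to a pair whose first component is precisely $a$ (not some other element with the same restriction to $X$). This is routine: the fold map, the collapse $Y\to SA$, and the inclusion of the first copy of $C_f$ into $Y$ are all manifestly equivariant, and the identification of the first component is forced by naturality with respect to the $G$-maps $C_f^{(1)}\hookrightarrow Y\xrightarrow{\nabla} C_f$, whose composite is the identity. Beyond this verification, the proof is a formal translation of the non-equivariant case.
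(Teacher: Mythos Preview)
Your proposal is correct and is exactly the approach the paper takes: the paper does not give an independent argument but simply asserts that the proofs in \cite[III(6.21)]{Whitehead} and \cite[5.6]{Dold} go through verbatim in $W_*(G)$, which is precisely the transcription you carry out. Your write-up in fact supplies more detail than the paper does, and the verifications of equivariance you flag are the only points requiring attention.
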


Recall that for every orbit $G/H$ of $G$ there is a family of
test $G$-spaces $e^n_H=S^n\wedge (G/H)_+$.  If $X^{(n)}$ is the $n$-skeleton
of $X$, its $(n+1)$-skeleton is the mapping cylinder of attaching maps
$\bigvee_i e_i\to X^{(n)}$, where each $e_i$ is $S^n\wedge(G/H_i)_+$
for some $H_i$.

\begin{theorem}\label{thm:Brown}
Let $\rho:E\to F$ be a natural transformation between half-exact functors
from $W_*(G)$ to Sets such that $\rho_{X}$ is a bijection for
every $G$-cell $X=S^n(G/H_+)$, and for every $X$ of dimension $\le1$.
Then $\rho_X: E(X)\to F(X)$ is a bijection for every connected
$X$ in $W_*(G)$.
\end{theorem}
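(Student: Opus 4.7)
The proof will proceed by induction on $n = \dim X$, with the hypothesis providing the base case $n \le 1$. For the inductive step ($n \ge 2$), I will assume $\rho_Y$ is a bijection whenever $\dim Y < n$, and consider $X$ with $\dim X = n$. Using the equivariant CW structure, I write $X = X^{(n)}$ as the mapping cone $C_f$ of an attaching map $f \colon A \to X^{(n-1)}$, where $A = \bigvee_i S^{n-1}\wedge (G/H_i)_+$ is a wedge of test $(n-1)$-cells. The wedge half-exactness axiom together with the inductive hypothesis then makes $\rho_A$ and $\rho_{X^{(n-1)}}$ bijections; and since $SA = \bigvee_i S^n\wedge (G/H_i)_+$ is itself a wedge of test $n$-cells, $\rho_{SA}$ is a bijection directly from the hypothesis of the theorem.

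The rest of the argument will compare the Puppe sequences for $E$ and $F$, linked vertically by $\rho$, and establish surjectivity and injectivity of $\rho_X$ in turn. For surjectivity, given $y \in F(X)$, I would lift $y|_{X^{(n-1)}}$ through $\rho_{X^{(n-1)}}^{-1}$ to $x_0 \in E(X^{(n-1)})$; then $f^*(x_0) \in E(A)$ maps to $0$ in $F(A)$ and hence vanishes by injectivity of $\rho_A$, so exactness of the $E$-Puppe sequence extends $x_0$ to some $x \in E(X)$. Lemma~\ref{action} applied to $F$ then tells me that $\rho(x)$ and $y$ differ by the action of some $\gamma \in F(SA)$; lifting via $\rho_{SA}^{-1}$ to $\tilde\gamma \in E(SA)$ gives $\tilde\gamma\cdot x \mapsto y$.

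For injectivity, suppose $\rho(x) = \rho(x')$. Then $x$ and $x'$ already coincide in $E(X^{(n-1)})$ by injectivity of $\rho_{X^{(n-1)}}$, so Lemma~\ref{action} applied to $E$ yields $\tilde\gamma \in E(SA)$ with $\tilde\gamma\cdot x = x'$, and applying $\rho$ gives $\rho(\tilde\gamma)\cdot \rho(x) = \rho(x)$; that is, $\rho(\tilde\gamma)$ stabilizes $\rho(x)$. I would then argue that this forces $\rho(\tilde\gamma)$ to lie in the image of $(Sf)^*\colon F(SX^{(n-1)}) \to F(SA)$, lift that image through $\rho_{SX^{(n-1)}}^{-1}$ to $\tilde\eta \in E(SX^{(n-1)})$ (the space $SX^{(n-1)}$ has dimension $n$, and surjectivity of $\rho_{SX^{(n-1)}}$ is the case already handled in the previous surjectivity step), and use bijectivity of $\rho_{SA}$ to conclude $\tilde\gamma = (Sf)^*\tilde\eta$. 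By Puppe exactness for $E$, such a $\tilde\gamma$ acts trivially on $x$, so $x = x'$.

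The hard part will be justifying the stabilizer identification: the set of $\gamma \in F(SA)$ fixing $\rho(x)$ equals the image of $(Sf)^*$. For $\rho(x)$ the basepoint this is exactly Puppe exactness, and the general case reduces to this using that $SA$ is a suspension, so that $F(SA)$ carries a group structure (abelian for $n \ge 2$) and acts on each Puppe fibre of $F(X) \to F(X^{(n-1)})$ as a principal homogeneous set over $F(SA)/\operatorname{im}(Sf)^*$. A secondary subtlety is that surjectivity and injectivity at dimension $n$ must be interleaved inside a single inductive step, but this is legitimate because the surjectivity argument at dimension $n$ uses only the bijectivity hypothesis in dimensions $< n$.
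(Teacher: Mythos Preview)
Your overall structure and your surjectivity argument match the paper's essentially verbatim. The gap is in the injectivity step, precisely at the point you yourself flag as ``the hard part'': the claim that the stabilizer of an arbitrary $y\in F(C_f)$ under the $F(SA)$-action is $\operatorname{im}(Sf)^*$, so that each fibre of $F(C_f)\to F(X^{(n-1)})$ is a principal $F(SA)/\operatorname{im}(Sf)^*$-set. For half-exact functors valued in pointed \emph{sets} this does not follow from the axioms. Lemma~\ref{action} gives transitivity on each fibre, and exactness identifies the stabilizer of the \emph{basepoint} with $\operatorname{im}(Sf)^*$; your abelianness observation transports this within a single orbit, but there is no mechanism to compare stabilizers across different fibres, because $F(C_f)$ carries no group structure and no translation taking one fibre to another. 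The companion claim you need for $E$ --- that every $\tilde\gamma\in\operatorname{im}(Sf)^*$ acts trivially on \emph{every} $x\in E(C_f)$ --- is the other direction of the same unproved statement.

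The paper circumvents this via Dold's mapping-torus trick. One forms the double mapping cylinder $W$ of $\id,\delta\colon C\rightrightarrows C\vee X$ (where $C=SA$); by \cite[5.7]{Dold} a pair $(\gamma,b)$ with $\gamma\cdot b=b$ in $F$ lifts to $F(W)$. Since the surjectivity half of the argument uses only surjectivity in lower dimension plus bijectivity on test cells, one may establish surjectivity of $\rho$ for all complexes first; in particular $\rho_W$ is onto, and a lift $w\in E(W)$ restricts to $(\gamma,a')\in E(C)\times E(X)$ with $\gamma\cdot a'=a'$ and $\rho(a')=b$. So rather than proving $\gamma\in\operatorname{im}(Sf)^*$, one manufactures an element $a'\in E(X)$ on which $\gamma$ already acts trivially; commutativity of $E(C)$ (valid since $n\ge2$) and one more use of Lemma~\ref{action} then give $a_1=\gamma\cdot a=a$.
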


\begin{proof}
The result is true for 1--dimensional $X$, 
by assumption.  
We proceed by induction on $\dim(X)$.
Suppose that $X=X^{(n+1)}$ and the result is true for the $n$-skeleton 
$X^{(n)}$. Then $X$ is the mapping cylinder of the attaching maps
$\bigvee_i e_i \map{\alpha} X^{(n)}$.
By hypothesis, $F(\bigvee_i e_i)\cong\prod F(e_i)$, 
so we have a diagram of pointed sets with exact rows:
\[\xymatrix@R=2.0em{ 
E(SX^{(n)}) \ar[r]^{S\alpha^*}\ar[d]^{\cong} & \prod E(Se_i)) 
\ar[r]\ar[d]^{\cong} & E(X) \ar[r]^{i^*}\ar[d]^{\rho_X} &
E(X^{(n)}) \ar[r]^{\alpha^*}\ar[d]^{\cong} &\prod E(e_i)\ar[d]^{\cong} 
\\
F(SX^{(n)}) \ar[r]^{S\alpha^*} & \prod F(Se_i) \ar[r] &
F(X) \ar[r]^{i^*} & F(X^{(n)}) \ar[r]^{\alpha^*} & \prod F(e_i).
}\]
For notational convenience, we write $C$ for $\bigvee Se_i=S(\bigvee e_i)$.

To see that $\rho_X$ is onto, suppose given $b\in F(X)$; a 
diagram chase shows that 
there is an $a\in E(X)$ so that $\rho_X(a)$ and $b$ agree
in $F(X^{(n)})$. By Lemma \ref{action}, there is a $\gamma$ in
$E(C)\cong F(C)$ so that
$b=\gamma\cdot \rho_X(a) = \rho_X(\gamma\cdot a)$.

To see that $\rho_X$ is into, we mimick Dold's argument
\cite[7.2]{Dold}, using the mapping torus $W$ of 
$\id,\delta: C\rightrightarrows C\vee X$.  
Given $a,a_1$ in $E(X)$ agreeing in $F(X)$, 
a diagram chase (using Lemma \ref{action}) shows that 
$a_1=\gamma\cdot a$ for some $\gamma\in E(C)$, and hence
$\gamma$ stabilizes $b=\rho_X(a)$.
By \cite[5.7]{Dold}, the pair $(\gamma,b)$ lifts to $F(W)$.
Since $\rho_W$ is onto (by Lemma \ref{action}), 
$(\gamma,b)$ lifts to $w\in E(W)$. The image $(\gamma',a')$ 
of this element in $E(C)\times E(X)$ satisfies 
$\gamma'\cdot a'=a'$, and maps to $(\gamma,b)$ in
$E(C)\times F(X)$, so $\gamma'=\gamma$ 
(recall that $E(C)\cong F(C)$ by assumption).
In addition, since $i^*(a)=i^*(a')$ (by the same argument),
$a = \gamma''\cdot a'$ for some $\gamma''$. Now
$E(C)$ is a commutative group, because $n\ge2$, so we conclude:
\[
a_1=\gamma\cdot a = \gamma\cdot(\gamma'' \cdot a') 
   = \gamma''\cdot(\gamma\cdot a')
   = \gamma''\cdot a' = a.   \qedhere
\]
\end{proof}


\bigskip

\end{document}